\newcommand\shorttitle{HUA-PICKRELL DIFFUSIONS AND FELLER PROCESSES ON THE BOUNDARY OF THE GRAPH OF SPECTRA}
\newcommand\authors{T. Assiotis}
\ifodd\value{page}
\authors
\shorttitle
\newtheorem{thm}{Theorem}[section]
\newtheorem{cor}[thm]{Corollary}
\newtheorem{lem}[thm]{Lemma}
\newtheorem{defn}[thm]{Definition}
\newtheorem{rmk}[thm]{Remark}
\newtheorem{prop}[thm]{Proposition}
\title{\large \bf HUA-PICKRELL DIFFUSIONS AND FELLER PROCESSES ON THE BOUNDARY OF THE GRAPH OF SPECTRA}
\author{\small THEODOROS ASSIOTIS}
\date{}
\begin{document}
\maketitle

\begin{abstract}
We consider consistent diffusion dynamics, leaving the celebrated Hua-Pickrell measures, depending on a complex parameter $s$, invariant. These, give rise to Feller-Markov processes on the infinite dimensional boundary $\Omega$ of the "graph of spectra", the continuum analogue of the Gelfand-Tsetlin graph, via the method of intertwiners of Borodin and Olshanski. In the particular case of $s=0$, this stochastic process is closely related to the $\mathsf{Sine_2}$ point process on $\mathbb{R}$ that describes the spectrum in the bulk of large random matrices. Equivalently, these coherent dynamics are associated to interlacing diffusions in Gelfand-Tsetlin patterns having certain \textit{Gibbs} invariant measures. Moreover, under an application of the Cayley transform when $s=0$ we obtain processes on the circle leaving invariant the \textit{multilevel} Circular Unitary Ensemble. We finally prove that the Feller processes on $\Omega$ corresponding to Dyson's Brownian motion and its stationary analogue are given by explicit and very simple deterministic dynamical systems.
\end{abstract}

\section{Introduction}

The main result of this paper is the construction of a Feller-Markov process on the infinite dimensional boundary $\Omega$ of the \textit{"graph of spectra"}, the continuum analogue of the classical Gelfand-Tsetlin graph, leaving the \textit{Hua-Pickrell measures} on $\Omega$ invariant, by the so called \textit{method of intertwiners}.

 This approach, of constructing such Feller processes, was first introduced by Borodin and Olshanski in \cite{BorodinOlshanski} in order to obtain stochastic dynamics on the boundary of the Gelfand-Tsetlin graph, which describes the branching of irreducible representations of the chain of unitary groups $\mathbb{U}(1)\subset \mathbb{U}(2)\subset \cdots$, that leave the $zw$-measures invariant; these distinguished measures first arose in the problem of the harmonic analysis on the infinite dimensional unitary group $\mathbb{U}(\infty)$, see in particular \cite{OlshanskiHarmonic} for more details. 
 
 The formalism of the intertwiners was then subsequently successfully applied in the case of the infinite symmetric group $S(\infty)$ in \cite{BorodinOlshanskiThoma} where in fact a more complete study of the properties of the resulting infinite dimensional process is possible (in particular its space-time correlation kernels can be computed explicitly) and also very recently by Cuenca in \cite{Cuenca} for the $BC$-type branching graph, which is related to the infinite symplectic $Sp(\infty)$ and orthogonal $O(\infty)$ groups.
 
 However, until now all these applications have been in the discrete setting and this contribution is the first one that deals directly with the continuum. Moreover, it should be noted that in the random matrix setting this is the first time an infinite dimensional Markov process is constructed starting from an arbitrary initial configuration and having the Feller property. Even in the simpler model of Dyson Brownian motion, in the works of Osada (see for example \cite{OsadaTanemura} and references therein) only equilibrium dynamics are considered and also in the tour de force work of Tsai \cite{Tsai}  the initial configuration needs to satisfy a certain balanced assumption. As will become clear, the reason we can achieve this  construction is because we take advantage of all integrable structures underlying this problem. Finally, as the Gelfand-Tsetlin graph degenerates to the graph of spectra under a limiting transition, we expect the dynamics constructed in this paper to be naturally related through a scaling limit (after possibly scaling the parameters as well) with the dynamics considered in \cite{BorodinOlshanski}, although the exact connection remains mysterious for now, see Section \ref{sectionmultilevel}.
 
 We now proceed to give a more detailed, although still informal, exposition of our results. All notions introduced below will be made precise in the relevant sections later on.
 
 We begin in Section \ref{sectionergodicboundary} by recalling several facts about unitarily invariant measures on the space of infinite Hermitian matrices $H$; these are precisely the measures invariant under the action by conjugation of $\mathbb{U}(\infty)$. As in all the settings mentioned above, these measures have a representation theoretic meaning as well, the ergodic invariant measures are in one to one correspondence with (equivalence classes of) spherical representations $\left(T,\xi\right)$ of the infinite dimensional Cartan motion group $G(\infty)=\lim_{N \to \infty}G(N)$ where $G(N)=\mathbb{U}(N) \ltimes H(N)$, the reader is referred to \cite{Pickrell} and \cite{OlshanskiVershik} for more details. The fundamental and indeed very remarkable result in the area, first appearing in Vershik's note \cite{VershikErgodic} where he introduced the so called \textit{ergodic method}, later also proved by Pickrell \cite{Pickrell} and a more detailed exposition of the original proof of Vershik appearing in \cite{OlshanskiVershik}, is the fact that the extremal or ergodic $\mathbb{U}(\infty)$ invariant measures can be characterized explicitly and are parametrized by the infinite dimensional space $\Omega$ defined in (\ref{boundarydefinition}).
 
We then define the "graph of spectra", which is not really a graph in the rigorous sense (that explains our use of quotation marks), but rather a projective chain. This is given by the sequence $\{W^N\}_{N\ge 1}$ of Weyl chambers in $\mathbb{R}^N$ namely $(x_1,\cdots,x_N)\in W^N$ if $x_1\le \cdots \le x_N$ and Markov kernels or links $\Lambda_N^{N+1}:W^{N+1}\to W^N$ given by ratios of Vandermonde determinants $\Delta_N(x)=\prod_{1 \le i <j \le N}^{}(x_j-x_i)$ as follows (some slight care is needed when some of $x$ coordinates coincide see Section 2.2),
\begin{align*}
\Lambda^{N+1}_{N}(x,dy)=\frac{N! \Delta_N(y)}{\Delta_{N+1}(x)}\boldmath{1}(y\prec x)dy,
\end{align*}
where for $y \in W^N,x \in W^{N+1}$ $y\prec x$ denotes interlacing: $x_1\le y_1 \le x_2 \le \cdots \le x_{N+1}$. It turns out that the Feller boundary in the sense of Borodin and Olshanski of this chain can be identified with the space $\Omega$. More precisely (but note that this is not the exact definition of a Feller boundary, some extra conditions are needed), the extreme set of the convex set consisting of sequences of coherent  \textit{probability} measures $\{\mu_N\}_{N\ge 1}$ on $\{W^N\}_{N\ge1}$ namely so that,
\begin{align*}
\mu_{N+1}\Lambda_N^{N+1}=\mu_N \ , \forall N\ge 1,
\end{align*}
can be parametrized by $\Omega$; moreover the Markov kernels $\Lambda_N^{\infty}:\Omega \to W^N \ \forall N\ge 1$  (under certain regularity assumptions) are given explicitly in terms of a single \textit{totally positive function}. We then close this section, following \cite{BorodinOlshanski} with a brief introduction to the main results of the method of intertwiners that we will use later on.

In Section \ref{sectionhuapickrellmeasures}, we introduce the Hua-Pickrell measures $\mu_{HP}^{s,N}$ on $W^N$, where $s$ is a complex parameter, that will be our main focus in this work. These measures were first studied by Hua Luogeng in the $50$'s in his book \cite{Hua} and later in the $80$'s rediscovered by Pickrell \cite{Pickrellmeasure} in the context of Grassmann manifolds, see also Neretin's generalization \cite{Neretin}. Borodin and Olshanski investigated in particular their determinantal properties \cite{BorodinOlshanskiErgodic} and very recently Bufetov and Qiu, see for example \cite{BufetovQiu} and \cite{Qiu}, studied the infinite case (when they can no longer be normalized to be probability measures) and also settled several open problems from \cite{BorodinOlshanskiErgodic}. We will collect several of their properties and key facts, the most fundamental being that they are consistent with the links $\Lambda_N^{N+1}$ above,
\begin{align*}
\mu_{HP}^{s,N+1}\Lambda_N^{N+1}=\mu_{HP}^{s,N} \ , \forall N\ge 1,
\end{align*}
 so that in particular we obtain, a non-extremal or equivalently not a delta function, measure $\mu_{HP}^s$ on $\Omega$. We mention in passing that, we will also give an independent proof of the consistency relation above, that avoids any difficult explicit computations of integrals, using the dynamical approach advocated in this paper.
 
 In Section \ref{sectiondiffusions} we introduce our stochastic dynamics. Akin to the classical case of Dyson's Brownian motion for $\beta=2$ these are given equivalently as a Doob's $h$-transform of one dimensional diffusions (with transition densities in $\mathbb{R}$ denoted by $p_t^{(N),s}$) killed when they intersect i.e. with transition density in $\mathring{W}^N$ given by,
\begin{align*}
e^{-\lambda_{N,s}t}\frac{\Delta_N(y)}{\Delta_N(x)}\det\left(p^{(N),s}_t(x_i,y_j)\right)^N_{i,j=1}dy,
\end{align*}
or as the unique strong solution to the system of Stochastic Differential Equations ($SDEs$) with long range repulsion where the $\{W_i\}_{i=1}^N$ are independent standard Brownian motions,

\begin{align*}
dX_i(t)=\sqrt{2(X_i^2(t)+1)}dW_i(t)+\left[\left(2-2N-2\Re(s)\right)X_i(t)+2\Im(s)+\sum_{j\ne i}^{}\frac{2(X^2_i(t)+1)}{X_i(t)-X_j(t)}\right]dt.
\end{align*}

We prove well-posedness and the Feller property for these processes and most importantly, that for $\Re(s)>-\frac{1}{2}$ the measures $\mu_{HP}^{s,N}$ are their unique invariant probability measures. Namely, if we denote by $P_{HP}^{s,N}(t)$ the Feller semigroups associated to the processes above we have for $\Re(s)>-\frac{1}{2}$,
\begin{align*}
\mu_{HP}^{s,N}P_{HP}^{s,N}(t)=\mu_{HP}^{s,N} \ \ t \ge 0, \forall N \ge 1.
\end{align*}

We then arrive at Section \ref{sectionintertwinings} where, after recalling some necessary results from \cite{InterlacingDiffusions} (we shall give a self-contained proof of these in Section \ref{appendix}) where intertwining relations between determinantal semigroups were studied, we prove our main result, the following consistency relation between the semigroups,
\begin{align*}
P_{HP}^{s,N+1}(t)\Lambda_N^{N+1}=\Lambda_N^{N+1}P_{HP}^{s,N}(t) \ ,\ t \ge 0, \forall N\ge 1.
\end{align*}
We thus, via the formalism of the method of intertwiners, obtain a Feller-Markov process with semigroup $P_{HP}^{s,\infty}(t)$ on $\Omega$ consistent with the stochastic processes on level $N$,
\begin{align*}
P_{HP}^{s,\infty}(t)\Lambda_N^{\infty}=\Lambda_N^{\infty}P_{HP}^{s,N}(t) \ ,\ t \ge 0, \forall N\ge 1 ,
\end{align*}
that has $\mu_{HP}^s$ for $\Re(s)>-\frac{1}{2}$ as its unique invariant probability measure. Since the description of these processes might seem a bit abstract and out of reach, we then discuss a hands on approximation procedure for boundary Feller processes from their finite $N$ analogues. Furthermore, as is by now relatively well known there are other (except the Hua-Pickrell introduced here) multidimensional diffusions consistent with the links $\Lambda_N^{N+1}$. The two most classical and simplest examples being Dyson's Brownian motion and its stationary Ornstein-Uhlenbeck counterpart (see for example \cite{Warren} and for general $\beta$ \cite{RamananShkolnikov}). By the intertwiners formalism, one again obtains a Feller process for each on $\Omega$. It turns out however that, these processes are simple deterministic dynamical systems and we showcase this by the rather down to earth approximation procedure mentioned above, see Subsection \ref{remarkDyson} for more details.

Moving on to Section \ref{sectionmultilevel}, we make the connection to interacting particle systems in $(2+1)$-dimensions. The motivation behind this section is to provide a relation with the discrete dynamics considered by Borodin and Olshanski on the path space of the Gelfand-Tsetlin graph. More precisely, making use of the general results of \cite{InterlacingDiffusions}, we construct consistent dynamics on the \textit{path space} of the graph of spectra leaving the multilevel Hua-Pickrell measures invariant. This path space is given equivalently by infinite interlacing arrays. More specifically, a path of length $N$ is given by a continuous Gelfand-Tsetlin pattern of depth $N$, denoted by $\mathbb{GT}_c(N)$. The diffusion processes $\mathbb{X}^{(N)}$ we construct in $\mathbb{GT}_c(N)$ (note that there must be some interaction between the components in order for the interlacing to remain) are such that if they are started according to a \textit{Gibbs} or \textit{Central} measure (see display (\ref{Gibbs}) for a precise definition) then the projection $\pi_n\mathbb{X}^{(N)}=\left(X_1^{(n)},\cdots,X^{(n)}_n\right)$ on the $n^{th}$ level evolves according to $P_{HP}^{s,n}(t)$. Moreover, in Subsection \ref{sectionCUE} we study how our results transfer to the circle $\mathbb{T}$ under an application of the $Cayley$ transform, which in more generality maps Hermitian matrices to unitary matrices. For the particular case $s=0$, we obtain an interlacing process that leaves the multilevel Circular Unitary Ensemble (CUE) invariant.

In Section \ref{sectionmatrixprocess} we introduce a matrix valued (more precisely Hermitian valued) process whose eigenvalue evolution is that of the system of $SDEs$ considered above.

Finally, in Section \ref{appendix} we give, for completeness of the paper, a self-contained proof of the intertwining relation from \cite{InterlacingDiffusions} that we make use of in Section \ref*{sectionintertwinings}.

\paragraph{Acknowledgements} I would like to thank Jon Warren for some useful comments on an earlier version. I am also very grateful to Professor A. Borodin for historical remarks and pointers and especially to Professor G. Olshanski for some stimulating comments which led to several improvements. I am also thankful to N. Demni for useful comments. Finally, I would like to thank two anonymous referees for many useful comments and suggestions that led to significant improvements in exposition. Financial support from EPSRC through the MASDOC DTC grant number EP/HO23364/1 is gratefully acknowledged.

\section{Ergodic measures and the boundary of the graph of spectra}\label{sectionergodicboundary}

\subsection{Ergodic unitarily invariant measures}

We begin by recalling some useful facts about unitarily invariant measures on the space of infinite Hermitian matrices. We mainly follow \cite{BorodinOlshanskiErgodic} and \cite{OlshanskiVershik},the connection to the graph of spectra will be clarified in the sequel. So, let $\mathbb{U}(N)$ be the group of $N\times N$ unitary matrices. Let $H(N)$ denote the space of $N \times N$ Hermitian matrices. Define the Cayley transform that maps $X \in H(N)$ to $U\in \mathbb{U}(N)$ by,
\begin{align*}
X\mapsto U=\frac{i-X}{i+X}.
\end{align*}
We denote this bijective map by $\mathfrak{C}$ and by $\pi^{N+1}_N$ the "cutting corner" projection from $H(N+1)$ to $H(N)$: $\pi^{N+1}_N\left[\left(h_{ij}\right)^{N+1}_{i,j=1}\right]=\left(h_{ij}\right)^{N}_{i,j=1}$. Finally we will write $\mathsf{eval}_N:H(N) \to W^N$ for the map on Hermitian matrices $H(N)$ defined by $\mathsf{eval}_N(H)=(x_1\le \cdots \le x_N)$ where the $(x_1\le \cdots \le x_N)$ are the ordered eigenvalues of the matrix $H$.

\iffalse

\begin{rmk}
Making use of this we could have defined the projection from $\mathbb{U}(N+1)$ to $\mathbb{U}(N)$ by $\mathfrak{U}^{N+1}_N(U)=\mathfrak{C}\circ \pi^{N+1}_N \circ \mathfrak{C}^{-1}(U)$. Then, a sequence $\{U_n\}_{n\ge 1}$ of unitary matrices with $U_n\in \mathbb{U}_n$ and $\mathfrak{U}^{n+1}_n(U_{n+1})=U_n \ \forall n \ge 1$ is called a Virtual Isometry as introduced and studied in \cite{Virtualisometries}. For recent exciting connections of these to number theory and the Riemann zeta function in particular see \cite{Zeta}.
\end{rmk}
\fi
 Moving on, we let $H$ denote the projective limit $\underset{\leftarrow}{\lim}H(n)$, the space of all infinite Hermitian matrices which can be naturally identified as a topological vector space with the infinite product $\mathbb{R}^{\infty}=\mathbb{R}\times \mathbb{R}\times \mathbb{R}\times\cdots$ by:
 \begin{align*}
 H\ni X \mapsto \{ X_{ii}\}\sqcup \{\Re{X_{ij}}, \Im{X_{ij}} \}.
 \end{align*}
Moreover, let $H(\infty)$ denote the inductive limit, $\lim_{N \to \infty} H(N)$, the space of $\infty \times \infty$ Hermitian matrices with finitely many non-zero entries and similarly we consider $\mathbb{U}(\infty)=\lim_{N \to \infty} \mathbb{U}(N)$ the inductive limit unitary group. With these definitions in place, there exists a pairing,
\begin{align*}
H(\infty) \times H \to \mathbb{R} \ ,\ \ (A,X) \mapsto Tr(AX).
\end{align*}
Now, for a Borel probability measure $M$ on $H$ define its Fourier transform as the function on $H(\infty)$ denoted by,
\begin{align*}
\hat{M}(A)=F_{M}(A)= \int_{H}e^{iTr(AX)}M(dX) \ \textnormal{ for } \ A\in H(\infty).
\end{align*}
The group $\mathbb{U}(\infty)$ acts on both $H(\infty)$ and $H$ by conjugation and the pairing of the two spaces is $\mathbb{U}(\infty)$ invariant. Observe that a matrix in $H(\infty)$ can be brought by conjugation to a diagonal matrix $\textnormal{diag}(r_1,r_2,\cdots)$ with finitely many non-zero entries. Thus, the Fourier transform of $\mathbb{U}(\infty)$ invariant probability measures on $H$, that we denote by $\mathcal{M}_p^{\mathbb{U}(\infty)-inv}\left(H\right)$, is uniquely determined by its values on the diagonal matrices from $H(\infty)$. It is a remarkable fact that, extremal or ergodic $\mathbb{U}(\infty)$ (these notions are of course equivalent see for example Proposition 1.3 of \cite{OlshanskiVershik}) invariant probability measures, $\mathcal{M}_p^{\mathbb{U}(\infty)-erg}\left(H\right)$, can be explicitly characterized. Define the space $\Omega$ by,
\begin{align}\label{boundarydefinition}
\Omega&=\bigg\{\omega=(\alpha^+,\alpha^-,\gamma_1,\delta)\in \mathbb{R}^{2\infty+2}=\mathbb{R}^\infty \times \mathbb{R}^\infty \times \mathbb{R}  \times \mathbb{R}|\nonumber\\
\alpha^+&=(\alpha_1^+\ge \alpha_2^+\ge \cdots \ge 0) \ \ \alpha^-=(\alpha_1^-\ge \alpha_2^-\ge \cdots \ge 0)\nonumber\\
\gamma_1 &\in \mathbb{R} \ \ \sum_{}^{}(\alpha_i^+)^2 + \sum_{}^{}(\alpha_i^-)^2\le \delta \bigg\}
\end{align}
and moreover let $\gamma_2=\delta-\sum_{}^{}(\alpha_i^+)^2 - \sum_{}^{}(\alpha_i^-)^2$. We note that $\Omega$ is a locally compact metrizable topological space with a countable base. Finally, write $F_{\omega}$ for,
\begin{align*}
F_{\omega}(x)=e^{i\gamma_1x-\frac{\gamma_2}{2}x^ 2}\prod_{k=1}^{\infty}\frac{e^{-i\alpha_k^+x}}{1-i\alpha_k^+x}\prod_{k=1}^{\infty}\frac{e^{i\alpha_k^-x}}{1+i\alpha_k^-x} \ .
\end{align*}
Observe that we have the estimate:
\begin{align*}
\frac{e^{i a x}}{1-iax}=1-\frac{3}{2}a^2x^2+O\left(a^3\right) \ \textnormal{ as } a \to 0.
\end{align*}
Thus, since $ \sum_{}^{}(\alpha_i^+)^2 + \sum_{}^{}(\alpha_i^-)^2<\infty$ the function $F_{\omega}(x)$ converges for all $x \in \mathbb{R}$ for any $\omega \in \Omega$; with the result being a continuous function. Moreover, observe that for any fixed $x\in \mathbb{R}$,  $F_{\omega}(x)$ as a function of $\omega\in \Omega$ is continuous.

The following fundamental theorem was first stated and a proof was outlined by Vershik in \cite{VershikErgodic}. It was later also proven by Pickrell \cite{Pickrell} by exploiting the connection to total positivity. A more detailed exposition of the original method of \cite{VershikErgodic} was subsequently given by Olshanski and Vershik in \cite{OlshanskiVershik}, (see also Defosseux \cite{Defosseux}).
\begin{thm}\label{ergodic}
There exists a parametrization of ergodic/extremal $\mathbb{U}(\infty)$-invariant probability measures on the space $H$, $\mathcal{M}_p^{\mathbb{U}(\infty)-erg}\left(H\right)$,  by the points of the space $\Omega$. Given $\omega$ the characteristic function of the ergodic measure $M_{\omega}$ is given by,
\begin{align*}
\int_{X \in H}^{}e^{iTr(\textnormal{diag}(r_1,\cdots,r_n,0,0,\cdots)X)}M_{\omega}(dX)=\prod_{j=1}^{n}F_{\omega}(r_j).
\end{align*}
\end{thm}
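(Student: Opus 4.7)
The plan is to follow Vershik's \emph{ergodic method} from \cite{VershikErgodic}, combined with classification via totally positive functions as in Pickrell's approach. Since every $\mathbb{U}(\infty)$-invariant probability measure on $H$ decomposes as a mixture of ergodic ones, I only need to classify $\mathcal{M}_p^{\mathbb{U}(\infty)-erg}\left(H\right)$. By $\mathbb{U}(\infty)$-invariance, $\hat{M}(A)$ depends only on the multiset of nonzero eigenvalues of $A \in H(\infty)$ and so is determined by the values $\Phi^{(n)}(r_1,\ldots,r_n) := \hat{M}(\textnormal{diag}(r_1,\ldots,r_n,0,0,\ldots))$. Writing $\varphi(r) := \Phi^{(1)}(r)$, the main structural claim is the multiplicative factorization $\Phi^{(n)}(r_1,\ldots,r_n) = \prod_{j=1}^n \varphi(r_j)$ for ergodic $M$. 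I would establish this by choosing diagonal $A_1, A_2 \in H(\infty)$ with disjoint coordinate supports and averaging $e^{i\textnormal{Tr}((A_1 + U A_2 U^*) X)}$ over a suitably amenable subgroup of $\mathbb{U}(\infty)$ that exchanges the supports; ergodicity of $M$ forces this mean to equal the product of the two marginal characteristic functions, yielding the factorization.

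With factorization in hand, it remains to identify which characteristic functions $\varphi$ can appear. I would show that $\varphi$ belongs to the class of totally positive functions in the sense of Schoenberg: the non-negativity of $M$ together with the Weyl integration formula on finite corners $X^{(N)}$ yields, after a Cauchy--Binet unfolding of $\Phi^{(N)}$, the positivity of all minors of suitable kernels built from $\varphi$. Applying the Schoenberg--Edrei--Aissen--Whitney classification of generating functions of totally positive sequences, together with its continuous analogue for characteristic functions of probability measures on $\mathbb{R}$, the only such $\varphi$ are of the form
\begin{align*}
\varphi(r) = e^{i\gamma_1 r-\frac{\gamma_2}{2}r^2}\prod_{k}\frac{e^{-i\alpha_k^+ r}}{1-i\alpha_k^+ r}\prod_{k}\frac{e^{i\alpha_k^- r}}{1+i\alpha_k^- r},
\end{align*}
with $(\alpha^+, \alpha^-, \gamma_1)$ as in the definition of $\Omega$ and $\gamma_2 = \delta - \sum(\alpha_k^+)^2 - \sum(\alpha_k^-)^2 \geq 0$; this is exactly $F_\omega$ and precisely the parameter space $\Omega$ defined in (\ref{boundarydefinition}).

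For the surjectivity of $\omega \mapsto M_\omega$, each $M_\omega$ is built as an infinite convolution of explicit ergodic building block measures corresponding to the individual parameters $\gamma_1, \gamma_2$ and $\alpha_k^{\pm}$; ergodicity of the resulting product then follows from tail-triviality. The main obstacle is the classification step: rigorously connecting $\varphi$ to total positivity so that Schoenberg--Edrei applies. Pickrell's route proceeds through analysis of the branching/link kernels between consecutive levels, whereas Vershik's approach extracts the parameters $\alpha_k^{\pm}, \gamma_1, \gamma_2$ directly as almost-sure limits of spectral statistics $x_k^{(N)}/N$, $\textnormal{Tr}(X^{(N)})/N$ and $\textnormal{Tr}((X^{(N)})^2)/N^2$ of the $N$-th corner using ergodicity, and then evaluates $\hat{M}$ by asymptotic analysis of the Harish-Chandra--Itzykson--Zuber integral $\int_{\mathbb{U}(N)} e^{i\textnormal{Tr}(A U X^{(N)} U^*)} dU$ as $N \to \infty$. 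Either path requires the delicate analytic step that forms the technical heart of the theorem.
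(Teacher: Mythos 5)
The paper does not prove Theorem~\ref{ergodic}: it states it and cites Vershik \cite{VershikErgodic}, Pickrell \cite{Pickrell}, Olshanski--Vershik \cite{OlshanskiVershik}, and Defosseux \cite{Defosseux} for proofs, so there is no ``paper's own proof'' to compare against. Your sketch is a reasonable reconstruction of the ideas in those references, combining the two known routes (Vershik's ergodic method and Pickrell's total-positivity route), and correctly identifies the two structural pillars: multiplicativity of the characteristic function for ergodic measures, and identification of the one-variable factor $\varphi$ as a P\'olya frequency function via Schoenberg--Edrei--Aissen--Whitney.

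However, one step in your sketch is not how the known arguments actually go and, as stated, does not obviously close. You propose to prove $\Phi^{(n)}(r_1,\ldots,r_n) = \prod_j \varphi(r_j)$ by averaging $e^{i\mathrm{Tr}((A_1 + U A_2 U^*)X)}$ over a subgroup of $\mathbb{U}(\infty)$ ``that exchanges the supports,'' invoking ergodicity. But a subgroup exchanging the coordinate supports of $A_1$ and $A_2$ does not separate the variables inside the exponential in any useful way, and $\mathbb{U}(\infty)$-ergodicity of $M$ by itself does not give asymptotic independence of the corresponding linear functionals $\mathrm{Tr}(A_1 X)$ and $\mathrm{Tr}(A_2 X)$ (they are both supported on the top-left corner of $X$, not on a tail). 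In Olshanski--Vershik the multiplicativity is instead obtained through the ergodic/approximation theorem: an ergodic $M$ is the weak limit of orbital measures on finite $\mathbb{U}(N)$-orbits, and the product structure emerges from the $N\to\infty$ asymptotics of the Harish-Chandra--Itzykson--Zuber integral $\int_{\mathbb{U}(N)} e^{i\mathrm{Tr}(A\,uX^{(N)}u^*)}\,du$, where the limit factorizes over the nonzero eigenvalues $r_1,\ldots,r_n$ of $A$. You do mention this HCIZ route at the end of your sketch, so the right mechanism is on your radar; but the ``amenable-averaging'' heuristic you put in the foreground is not a substitute for it, and I would excise or replace it. Apart from that, your acknowledgement that the Schoenberg--Edrei step is where the real analytic work lies is accurate: both Pickrell's total-positivity route (which is what the paper later uses to construct the Feller links $\Lambda_N^\infty$) and Vershik's direct extraction of the spectral parameters require substantial analysis that your outline only gestures at.
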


\begin{rmk}\label{DescriptionofErgodicmeasuresremark}
We observe that the characteristic function $F_{\omega}$ of an ergodic measure $M_{\omega}$ is given as a product of characteristic functions of simpler measures, with only one non-zero parameter, that we call elementary. Equivalently any ergodic measure is given as a convolution of elementary ergodic ones. More precisely writing this in terms of a sum of independent random Hermitian matrices:
\begin{align*}
\gamma_1 \textnormal{Id}+G^{\gamma_2}+\sum_{k\ge 1}^{}\alpha_k^+\left[-\textnormal{Id}+\zeta^*(k)\zeta(k)\right]+\sum_{k\ge 1}^{}\left(-\alpha_k^-\right)\left[-\textnormal{Id}+\xi^*(k)\xi(k)\right].
\end{align*}
Here, $G^{\gamma_2}$ is an infinite GUE matrix, namely the entries $G_{ii}^{\gamma_2}$ and $\Re G_{ij}^{\gamma_2}$, $\Im G_{ij}^{\gamma_2}$ are independent normal random variables of mean 0 and variance $\gamma_2$ subject to the Hermitian constraint. Moreover, the $\zeta(k)$ and $\xi(k)$ are independent infinite row vectors whose entries are i.i.d. complex normal random variables. For more details see Remarks 2.10-2.13 of \cite{OlshanskiVershik} and also Defosseux \cite{Defosseux} Theorem 2.7.
\end{rmk}

The following two notions will be useful in what follows.
\begin{defn}
A real non-negative measurable function $\phi(x)$ on $\mathbb{R}$ such that $\int_{\mathbb{R}}\phi(x)dx=1$ is called \textit{totally positive} if for $n\ge 1$ and $x_1<\cdots<x_n$ and $y_1<\cdots<y_n$,
\begin{align*}
\det\left(\phi(x_{i}-y_j)\right)^n_{i,j=1}\ge 0.
\end{align*}
\end{defn}
\begin{defn}
A \textit{real smooth non-negative} function $\phi(x)$ on $\mathbb{R}$ such that $\int_{\mathbb{R}}\phi(x)dx=1$ is called \textit{extended totally positive} if,
\begin{align}\label{extendedtotallypositiveinequalities}
\det\left(\phi^{(i-1)}(x_{n+1-j})\right)^n_{i,j=1}\ge 0 \ ,\ n=1,2, \cdots \textnormal{ and } x_1 < \cdots < x_n.
\end{align}
\end{defn}

It can be easily shown, see Proposition 7.6 part (i) of \cite{OlshanskiVershik}, that a smooth totally positive function is actually extended totally positive. On the other hand, as the terminology suggests, an extended totally positive function is also totally positive, see Proposition IV.2.3 of \cite{Faraut}, also Proposition 7.6 part (ii) of \cite{OlshanskiVershik}. The elegant argument for this goes as follows: if we convolve an extended totally positive function $\phi$ with a Gaussian of variance $\mathfrak{s}^2>0$, then the inequalities in (\ref{extendedtotallypositiveinequalities}) become strict, see for example Proposition 7.6 part (ii) of \cite{OlshanskiVershik}. Then, by Theorem 2.1 on page 50 of \cite{Karlin} the convolved function is in fact totally positive. We conclude by sending $\mathfrak{s}^2\to 0$ to recover $\phi$.

Now, by Theorem 7.7 of \cite{OlshanskiVershik} (see also Proposition 7.6 part (ii) therein) for $\omega \in \Omega$ with $\gamma_2(\omega)>0$, the function $\phi=\phi_{\omega}$ such that its Fourier transform is given by,
\begin{align*}
\hat{\phi}(\xi)=\hat{\phi}_{\omega}(\xi)=F_{\omega}(\xi),
\end{align*}
is extended totally positive and thus also totally positive.

\subsection{The graph of Spectra and its Boundary}
We start by setting up some notation. Write $x=(x_1,\cdots,x_N)\in W^N$ if $x_1\le \cdots \le x_N$ and furthermore write $W^{N,N+1}(x)$ for the set of $y\in W^N$ that interlace with $x\in W^{N+1}$ i.e. $x_1 \le y_1 \le x_2 \le \cdots \le y_N \le x_{N+1}$ (we will also denote this suppressing any dependence on $N$ by $y\prec x$). We define the Markov kernel for $x \in \mathring{W}^{N+1}$, the interior of $W^{N+1}$,
\begin{align*}
\Lambda^{N+1}_{N}(x,dy)=\frac{N! \Delta_N(y)}{\Delta_{N+1}(x)}\boldmath{1}(y\in W^{N,N+1}(x))dy,
\end{align*}
where $\Delta_{N}(y)=\prod_{1 \le i <j \le N}^{}(y_j-y_i)$. In fact, the Markov kernel above has an interpretation as a conditional distribution for matrix eigenvalues, the first published proof of this fact was given by Baryshnikov (see Proposition 4.2 in \cite{Baryshnikov}) in the random matrix literature (see also Proposition 3.1 in \cite{OlshanskiProjections} and the historical comments therein). Namely, it is the law of:
\begin{align}\label{VandermondeKernelMatrixRep}
\mathsf{eval}_N\left(\pi_N^{N+1}\left[U^*\textnormal{diag}\left(x_1,\cdots,x_{N+1}\right)U\right]\right)
\end{align}
where $U$ is a Haar distributed unitary matrix from $\mathbb{U}(N+1)$.
Observe that the expression (\ref{VandermondeKernelMatrixRep}) makes sense for arbitrary $x \in W^{N+1}$. Thus, for any $x \in W^{N+1}$ we take as the definition of $\Lambda_N^{N+1}(x,\cdot)$ the law of (\ref{VandermondeKernelMatrixRep}).

We will see in the proof of the lemma below that this definition coincides with the weak limit of $\Lambda_N^{N+1}\left(x^{(n)},\cdot\right)$ for $\{x^{(n)}\}_{n}\in \mathring{W}^{N+1}$ converging to $x$. Denote by $C_0\left(W^N\right)$ the space of continuous functions on $W^N$ vanishing at infinity. 
\begin{lem}\label{Feller1}
$\Lambda_N^{N+1}$ is a \textit{Feller} kernel i.e.,
\begin{align*}
\Lambda_N^{N+1}f \in C_0\left(W^{N+1}\right), \forall f \in C_0\left(W^{N}\right).
\end{align*}
\end{lem}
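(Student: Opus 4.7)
The plan is to work throughout with the matrix representation \eqref{VandermondeKernelMatrixRep}, which cleanly defines $\Lambda_N^{N+1}(x,\cdot)$ on all of $W^{N+1}$, and to derive both continuity and decay at infinity from it, complemented by the explicit Vandermonde density on the interior $\mathring{W}^{N+1}$.

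For continuity, write $D(x)=\mathrm{diag}(x_1,\ldots,x_{N+1})$ and let $U$ be Haar distributed on $\mathbb{U}(N+1)$, so
\[
(\Lambda_N^{N+1}f)(x)=\mathbb{E}\left[f\left(\mathsf{eval}_N\circ\pi_N^{N+1}\!\left(U^*D(x)U\right)\right)\right].
\]
For each fixed $U$, the map $x\mapsto\mathsf{eval}_N\circ\pi_N^{N+1}(U^*D(x)U)$ is continuous, since $x\mapsto U^*D(x)U$ and $\pi_N^{N+1}$ are linear and $\mathsf{eval}_N:H(N)\to W^N$ is continuous (the ordered eigenvalues of a Hermitian matrix depend continuously on its entries). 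Since $|f|\le\|f\|_\infty$, dominated convergence yields continuity of $\Lambda_N^{N+1}f$ on all of $W^{N+1}$. Combined with Baryshnikov's identification of the law of \eqref{VandermondeKernelMatrixRep} with the Vandermonde density on $\mathring{W}^{N+1}$, this shows in passing that the matrix definition agrees with the weak-limit extension of the explicit formula, as announced just before the lemma.

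For decay at infinity, fix $f\in C_0(W^N)$ and $\epsilon>0$, pick a compact $K\subset W^N$ outside of which $|f|<\epsilon$, and estimate $|(\Lambda_N^{N+1}f)(x)|\le\epsilon+\|f\|_\infty\,\Lambda_N^{N+1}(x,K)$; it suffices to show $\Lambda_N^{N+1}(x^{(k)},K)\to 0$ whenever $x^{(k)}\in W^{N+1}$ leaves every compact. By Cauchy interlacing $x_i^{(k)}\le y_i^{(k)}\le x_{i+1}^{(k)}$, so if some middle coordinate $x_i^{(k)}$ with $2\le i\le N$ escapes to $\pm\infty$, then the corresponding $y_i^{(k)}$ or $y_{i-1}^{(k)}$ escapes as well, placing $y^{(k)}$ outside $K$ and forcing $\Lambda_N^{N+1}(x^{(k)},K)=0$ for all large $k$. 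Otherwise the middle coordinates stay bounded while $x_1^{(k)}\to-\infty$ or $x_{N+1}^{(k)}\to+\infty$; using continuity to reduce to $x^{(k)}\in\mathring{W}^{N+1}$ and then applying the Vandermonde density, the $N$ factors of the form $(x_{N+1}^{(k)}-x_j^{(k)})$ or $(x_j^{(k)}-x_1^{(k)})$ in $\Delta_{N+1}(x^{(k)})$ each tend to $+\infty$, while $\Delta_N(y)$ integrated over $K\cap\{y\prec x^{(k)}\}$ stays under control, delivering $\Lambda_N^{N+1}(x^{(k)},K)\to 0$.

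The main technical subtlety I foresee is obtaining a clean quantitative bound in the subcase where the bounded middle coordinates of $x^{(k)}$ cluster together, since $\Delta_{N+1}(x^{(k)})$ itself then partially degenerates and a crude bound by $\sup_K\Delta_N$ alone is insufficient; a power-counting argument balancing the decay of $\Delta_N(y)$ on the shrinking interlacing box against the degeneration of $\Delta_{N+1}(x^{(k)})$ is needed. A cleaner alternative that bypasses these degeneracies is to argue directly in the matrix picture: writing $\pi_N^{N+1}(U^*D(x)U)=\sum_{l=1}^{N+1}x_l\,\tilde w_l\tilde w_l^*$ for the natural random projections $\tilde w_l\in\mathbb{C}^N$, Weyl's inequality together with the almost-sure positivity of $\|\tilde w_l\|^2$ under Haar measure yields $y_N^{(k)}\to+\infty$ (respectively $y_1^{(k)}\to-\infty$) in probability in the remaining cases, and $\mathbb{E}[f(y^{(k)})]\to 0$ follows by dominated convergence.
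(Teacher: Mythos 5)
Your continuity argument is identical to the paper's: express $\Lambda_N^{N+1}f$ via the Baryshnikov matrix representation and invoke dominated convergence together with continuity of $x\mapsto\mathsf{eval}_N\circ\pi_N^{N+1}(U^*D(x)U)$. The difference is in the decay part. The paper, after reducing (WLOG) to $x_{N+1}^{(n)}\to\infty$, splits on whether $x_N^{(n)}$ also escapes; if it stays bounded, it approximates $x^{(n)}$ from the interior and applies the mean-value theorem $N$ times to $F_{N+1}$, after which the ratio $\prod(x_{i+1}-x_i)\Delta_N(\xi)/\Delta_{N+1}(x)$ is bounded by $1$ factor-by-factor using the interlacing inequalities \eqref{MVTconstraintsVandermonde}; extracting one surviving factor $(x_{N+1}-x_{N-1})^{-1}\to 0$ then closes the argument. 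This MVT bookkeeping is precisely the ``power-counting balance'' between the degenerating $\Delta_{N+1}(x)$ and the shrinking interlacing box that you correctly flag as the delicate step in a naive estimate, and your first attempt at the Vandermonde route indeed leaves it open. Your alternative, however, is sound and genuinely different: writing $\pi_N^{N+1}(U^*D(x)U)=\sum_l x_l\tilde w_l\tilde w_l^*$, Weyl's inequality gives $y_N\ge x_{N+1}\|\tilde w_{N+1}\|^2 - C$ with $C$ controlled by the bounded coordinates, and $\|\tilde w_{N+1}\|^2>0$ Haar-a.s.\ forces $y_N\to\infty$ almost surely (not merely in probability, as you write), so $f(y^{(k)})\to 0$ and dominated convergence finishes. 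This bypasses the cluster-degeneracy issue entirely, at the cost of working in the matrix picture; the paper's MVT route is self-contained at the level of the explicit Vandermonde density. Both are correct; yours is arguably cleaner once the matrix representation is already in play, which it is for the continuity half anyway.
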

\begin{proof}

We have:
\begin{align*}
\left[\Lambda_N^{N+1}f\right](x_1,\cdots,x_{N+1})=\mathbb{E}_{\mathbb{U}(N+1)}\left[f\left[\mathsf{eval}_N\left(\pi_N^{N+1}\left[U^*\textnormal{diag}\left(x_1,\cdots,x_{N+1}\right)U\right]\right)\right]\right].
\end{align*}
Thus, if we take any sequence $x^{(n)} \in {W}^{N+1}$ converging to some $x \in W^{N+1}$ by the dominated convergence theorem and continuity of all functions involved in the representation above we obtain:
\begin{align*}
\left[\Lambda_N^{N+1}f\right](x^{(n)}_1,\cdots,x^{(n)}_{N+1}) \to \left[\Lambda_N^{N+1}f\right](x_1,\cdots,x_{N+1}).
\end{align*}
In particular, we have the weak convergence of probability measures:
\begin{align*}
\Lambda^{N+1}_{N}(x^{(n)},\cdot)\overset{}{\rightharpoonup
}\Lambda^{N+1}_{N}(x,\cdot).
\end{align*}
Now, we show that as $x^{(n)} \to \infty$ we have $\left[\Lambda_N^{N+1}f\right](x^{(n)}_1,\cdots,x^{(n)}_{N+1}) \to 0$. Without loss of generality assume $x^{(n)}_{N+1} \to \infty$. If $x_N^{(n)}\to \infty$ as well, necessarily by interlacing of eigenvalues we have:
\begin{align*}
\mathsf{eval}_N\left(\pi_N^{N+1}\left[U^*\textnormal{diag}\left(x_1,\cdots,x_{N+1}\right)U\right]\right)\to \infty.
\end{align*}
Then the result follows immediately by the fact that $f \in C_0\left(W^{N}\right)$ and the dominated convergence theorem.
Now, assume $x_N^{(n)}$ remains bounded. We first take for each $n$ a sequence $\{x^{(n),m}\}_{m}\in \mathring{W}^{N+1}$ such that $\underset{m\to \infty}{\lim}x^{(n),m}=x^{(n)}$. For $z \in \mathring{W}^{N+1}$ we have using the explicit expression:
\begin{align}
\left[\Lambda_N^{N+1}f\right](z_1,\cdots,z_{N+1})=\frac{N!\int_{z_1}^{z_2}\cdots \int_{z_{N}}^{z_{N+1}}\Delta_N(y)f(y)dy_1\cdots dy_N}{\Delta_{N+1}(z)}.
\end{align}
Applying the mean-value theorem, a total of $N$ times, successively in the variables $z_{N+1},z_{N},\cdots, z_2$ to the function:
\begin{align*}
F_{N+1}(z_1,\cdots,z_{N+1})=\int_{z_1}^{z_2}\int_{z_2}^{z_3}\cdots \int_{z_N}^{z_{N+1}}\Delta_N(y)f(y)dy_1\cdots dy_N
\end{align*}
we obtain:
\begin{align}
\left[\Lambda_N^{N+1}f\right](z_1,\cdots,z_{N+1})=\frac{N!\prod_{i=1}^{N}(z_{i+1}-z_i)\Delta_N(\xi)f(\xi)}{\Delta_{N+1}(z)}
\end{align}
for some $(\xi_1, \cdots, \xi_N)$ such that $z_1<\xi_1<z_2<\cdots<\xi_N<z_{N+1}$. Moreover, note that the interlacing constraints for  $i=1,\cdots,N-1$ and $l=1,\cdots, N-i$ imply:
\begin{align}\label{MVTconstraintsVandermonde}
\frac{|\xi_{i+l}-\xi_i|}{|z_{i+l+1}-z_i|}\le 1.
\end{align}
Then, we have:
\begin{align*}
\left[\Lambda_N^{N+1}f\right](x^{(n)}_1,\cdots,x^{(n)}_{N+1})&=\underset{m\to \infty}{\lim}\left[\Lambda_N^{N+1}f\right](x^{(n),m}_1,\cdots,x^{(n),m}_{N+1})\\
&=\underset{m\to \infty}{\lim}\frac{N!\prod_{i=1}^{N}(x^{(n),m}_{i+1}-x^{(n),m}_i)\Delta_N(\xi^{(n),m})f(\xi^{(n),m})}{\Delta_{N+1}(x^{(n),m})}\\
&=\underset{m\to \infty}{\lim}\frac{N!\prod_{i=1}^{N}(x^{(n),m}_{i+1}-x^{(n),m}_i)\Delta_N(\xi^{(n),m})}{\Delta_{N+1}(x^{(n),m})}f\left(\xi^{(n)}\right).
\end{align*}
If $\xi_N^{(n)}\to \infty$, then $f\left(\xi^{(n)}\right)\to 0$ and moreover since we have uniformly in $n$ and $m$ the bound from the constraints (\ref{MVTconstraintsVandermonde}):
\begin{align*}
\left|\frac{\prod_{i=1}^{N}(x^{(n),m}_{i+1}-x^{(n),m}_i)\Delta_N(\xi^{(n),m})}{\Delta_{N+1}(x^{(n),m})}\right|\le 1,
\end{align*}
we get $\left[\Lambda_N^{N+1}f\right](x^{(n)}_1,\cdots,x^{(n)}_{N+1}) \to 0$. Now, suppose $\xi_N^{(n)}$ remains bounded. Note that, we have:
\begin{align*}
\underset{n\to \infty}{\lim}\underset{m\to \infty}{\lim}\frac{1}{\left(x_{N+1}^{(n),m}-x_{N-1}^{(n),m}\right)}=0
\end{align*} 
since $x_{N-1}^{(n)}\le x_{N}^{(n)}$ remains bounded. While on the other hand, since $\xi_N^{(n)}$ is bounded, we have that:
\begin{align*}
\left|\frac{\left(x_{N+1}^{(n),m}-x_{N-1}^{(n),m}\right)\prod_{i=1}^{N}(x^{(n),m}_{i+1}-x^{(n),m}_i)\Delta_N(\xi^{(n),m})f\left(\xi^{(n)}\right)}{\Delta_{N+1}(x^{(n),m})}\right|
\end{align*}
remains bounded from which the result follows.

\end{proof}

We will now consider the projective limit of this system $(W^N,\Lambda_N^{N+1})_{N\ge 1}$ in the measurable category $\mathcal{B}$. 
\begin{defn}
The category $\mathcal{B}$ consists of objects given by standard Borel spaces and morphisms given by Markov kernels that we will also call links. Such a kernel $\Lambda: X \to Y$ between two standard Borel spaces $X$ and $Y$ is a function $\Lambda\left(x,\mathcal{Y}\right)$ where $x$ ranges over $X$ and $\mathcal{Y}$ ranges over measurable subsets of $Y$ such that $\Lambda\left(x,\cdot\right)$ is a probability measure on $Y$ for any fixed $x \in X$ and $\Lambda\left(\cdot,\mathcal{Y}\right)$ is a measurable function on $X$ for each fixed $\mathcal{Y}$.
\end{defn}

\begin{defn}\label{DefinitionBoundary}
The limit object $W^{\infty}$ of $(W^N,\Lambda_N^{N+1})_{N\ge 1}$ in $\mathcal{B}$ is understood in the following sense: It consists of an object $W^{\infty}=\underset{\leftarrow}{\lim}W^N$ and links $\Lambda_N^{\infty}: W^{\infty} \to W^N$ such that $\Lambda_N^{\infty} \Lambda_K^N=\Lambda^{\infty}_K, \ \forall K<N$. Moreover if an object $\tilde{W}^{\infty}$ and links $\tilde{\Lambda}_N^{\infty}:\tilde{W}^{\infty}\to W^N$ satisfy the same condition, then there exists a unique link $\Lambda^{\tilde{W}^{\infty}}_{W^{\infty}}:\tilde{W}^{\infty}\to W^{\infty}$ such that  $\tilde{\Lambda}_N^{\infty}=\Lambda^{\tilde{W}^{\infty}}_{W^{\infty}}\Lambda^{\infty}_{N}$. By a general result of Winkler, see Theorem 4.1.3 in \cite{Winkler}, the limit exists and it is unique up to a Borel isomorphism (more generally this fact holds for arbitrary standard Borel spaces in place of the Weyl chambers $W^N$). We will call $W^{\infty}$ the boundary of the system $(W^N,\Lambda_N^{N+1})_{N\ge 1}$.
\end{defn}

In fact, the boundary coincides with the following construction: Note that the links induce the chain of affine mappings: 
\begin{align*}
\cdots\to \mathcal{M}_p\left(W^{N+1}\right)\to\mathcal{M}_p\left(W^{N}\right)\to \cdots \to \mathcal{M}_p\left(W^{2}\right)\to \mathcal{M}_p\left(W^{1}\right) ,
\end{align*} 
where $\mathcal{M}_p\left(W^N\right)$ is the simplex of probability measures on $W^N$ equipped with the weak topology. Consider the space $\mathcal{W}=\prod_{N=1}^{\infty}\mathcal{M}_p\left(W^N\right)$ with the product topology and define the inverse system of simplices (not to be confused with the limit in the measurable category):
\begin{align*}
\underset{\leftarrow}{\lim}\mathcal{M}_p\left(W^N\right)=\{ (\mu_N)_{N\ge 1} \in \mathcal{W}: \mu_{N+1}\Lambda_N^{N+1}=\mu_N \ , \forall N\},
\end{align*}
consisting of coherent sequences of measures. By Theorem 3.2.3 in \cite{Winkler} (see also step 3 in the proof of Theorem 4.1.3 therein) the convex set $\underset{\leftarrow}{\lim}\mathcal{M}_p\left(W^N\right)$ is actually a Polish simplex. Moreover, by steps 3 and 4 in the proof of Theorem 4.1.3 on page 103 of \cite{Winkler} (see also second paragraph on page 109 of \cite{Winkler}) its extreme points coincide with $W^{\infty}$ (in fact this is how $W^{\infty}$ is constructed):
\begin{align}
W^{\infty}=\underset{\leftarrow}{\lim}W^N=\textnormal{Ex}\left(\underset{\leftarrow}{\lim}\mathcal{M}_p\left(W^N\right)\right).
\end{align}
Thus, the boundary consists of extremal coherent sequences of (probability) measures. Moreover, for $\mathfrak{w}\in W^{\infty}$ such that $\mathfrak{w}=(\mu_N)_{N\ge 1} \in \textnormal{Ex}\left(\underset{\leftarrow}{\lim}\mathcal{M}_p\left(W^N\right)\right)$ the links are given by $\Lambda^{\infty}_N\left(\mathfrak{w},\cdot\right)=\mu_N\left(\cdot\right)$. 
\begin{defn}
In the setting of Definition \ref{DefinitionBoundary}, if moreover all the links $\{\Lambda_N^{N+1}\}_{N\ge 1}$ and $\{\Lambda_N^{\infty}\}_{N\ge 1}$ are Feller, namely map continuous functions vanishing at infinity to continuous functions vanishing at infinity, we will say that $W^{\infty}$ is the Feller boundary of $\{W^N\}_{N\ge 1}$.
\end{defn}
Then, we have the following proposition (proven in this subsection after several preliminaries),

\begin{prop}\label{boundary}
$W^{\infty}=\Omega$ is the Feller boundary of $\{W^N\}_{N\ge 1}$.
\end{prop}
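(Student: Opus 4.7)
The proposition has two aspects to verify: first, the identification of the inverse-limit boundary $W^{\infty}$ (as a set of extremal coherent sequences) with the space $\Omega$ of Theorem \ref{ergodic}; and second, the Feller property of the links $\Lambda_N^{\infty}:\Omega\to W^N$ (the Feller property of $\Lambda_N^{N+1}$ having already been proved in Lemma \ref{Feller1}).

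For the identification I would set up a bijection between coherent sequences $\{\mu_N\}_{N\ge 1}\in\underset{\leftarrow}{\lim}\mathcal{M}_p(W^N)$ and $\mathbb{U}(\infty)$-invariant Borel probability measures on $H$. The matrix interpretation (\ref{VandermondeKernelMatrixRep}) of the link $\Lambda_N^{N+1}$ identifies it with the distribution of the eigenvalues of the top-left $N\times N$ corner of a unitarily invariant matrix in $H(N+1)$ whose spectrum is fixed; in particular, given a coherent sequence, spreading each $\mu_N$ to the unique $\mathbb{U}(N)$-invariant measure on $H(N)$ with that spectral distribution produces a tower of consistent measures under the corner projections $\pi_N^{N+1}$, and Kolmogorov's extension theorem yields a $\mathbb{U}(\infty)$-invariant measure on $H$. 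The inverse map is just $M\mapsto\{(\mathsf{eval}_N\circ\pi_N^\infty)_*M\}_{N\ge 1}$. This bijection is affine, so extreme points correspond to extreme points; the extreme $\mathbb{U}(\infty)$-invariant measures are precisely $\{M_\omega\}_{\omega\in\Omega}$ by Theorem \ref{ergodic}. Hence $W^{\infty}=\Omega$ as Borel spaces, and under this identification $\Lambda_N^{\infty}(\omega,\cdot)$ is the law of the ordered eigenvalues of $\pi_N^\infty(X)$ for $X\sim M_\omega$.

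For the Feller property, continuity of $\omega\mapsto\Lambda_N^{\infty}(\omega,\cdot)$ (in the weak topology of $\mathcal{M}_p(W^N)$) follows from Remark \ref{DescriptionofErgodicmeasuresremark}: $M_\omega$ is an independent convolution of elementary Hermitian random matrices whose distributions depend continuously on the parameters $(\gamma_1,\gamma_2,\alpha^\pm)$ in the topology of $\Omega$, so the law of any fixed $N\times N$ corner depends continuously on $\omega$, and then so does the joint distribution of its eigenvalues. This immediately gives that $\Lambda_N^{\infty}f$ is continuous on $\Omega$ for every $f\in C_b(W^N)$.

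The more delicate point is vanishing at infinity. Since $\Omega$ is locally compact with a countable base, escape to infinity in $\Omega$ means escape of at least one of $|\gamma_1|,\delta,\alpha_1^+,\alpha_1^-$ to $+\infty$. I would handle each mode of escape by inspecting the convolution decomposition of Remark \ref{DescriptionofErgodicmeasuresremark}: if $|\gamma_1|\to\infty$ the whole spectrum of $\pi_N^\infty(X)$ translates off to $\pm\infty$ (since $\gamma_1\mathrm{Id}$ shifts the corner deterministically); if $\gamma_2\to\infty$ the GUE summand has variance $\to\infty$ so the $N$ eigenvalues of the corner spread out; if $\alpha_1^+\to\infty$ (resp.\ $\alpha_1^-\to\infty$) a rank-one summand $\alpha_1^+[-\mathrm{Id}+\zeta^*\zeta]$ contributes a large eigenvalue to the corner with probability tending to $1$, pushing one coordinate of $\mathsf{eval}_N(\pi_N^\infty(X))$ to $\pm\infty$. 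In every case, for $f\in C_0(W^N)$ one obtains $\mathbb{E}_{M_\omega}[f(\mathsf{eval}_N(\pi_N^\infty(X)))]\to 0$, hence $\Lambda_N^{\infty}f\in C_0(\Omega)$. The main obstacle I anticipate is a uniform book-keeping of these cases; in particular one must be careful that a large $\alpha_k^+$ for any index $k\le N$ (and the accompanying eigenvalue of the corner) can escape while other parameters stay bounded, and conversely that the escape of $\gamma_1$ or the spreading from $\gamma_2$ cannot be "cancelled" by compensating behaviour in the other summands since the summands are independent. A convenient way to organise this is to fix a compact exhaustion $K_1\subset K_2\subset\cdots\subset\Omega$ and, for each $f\in C_0(W^N)$ and $\varepsilon>0$, find $k$ with $|\Lambda_N^{\infty}f(\omega)|<\varepsilon$ for $\omega\notin K_k$ using the explicit distributional description above.
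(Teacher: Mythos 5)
Your proposal follows the paper's overall skeleton --- identify the abstract boundary with $\Omega$ via the bijection between coherent sequences on $\{W^N\}$ and $\mathbb{U}(\infty)$-invariant probability measures on $H$, invoke Theorem \ref{ergodic} for the extremal ones, then establish the Feller property of the links $\Lambda_N^{\infty}$ --- and the bijection half is essentially Lemma \ref{bijection} of the text (you omit the upgrade from a continuous bijection to a Borel isomorphism, which the paper handles by Mackey's theorem and the ergodic decomposition theorem, but this is a standard step). Where you genuinely diverge is in the Feller property of the boundary links, Lemma \ref{FellerBoundary}. The paper transfers the whole problem to Fourier space through the Plancherel-type identity (\ref{Fourier}), which rewrites $(\Lambda_N^{\infty}f)(\omega)$ as an integral of $\Delta_N^2(x)F_{\omega}(x_1)\cdots F_{\omega}(x_N)\bar{\hat{f}}(x)$; continuity then follows from pointwise continuity of $\omega\mapsto F_{\omega}(x)$ and dominated convergence, while vanishing at infinity uses the universal bounds $|F_{\omega}(y)|\le e^{-\gamma_2 y^2/2}$ and $|F_{\omega}(y)|\le\big(1+(\alpha_1^{\pm}y)^2\big)^{-1/2}$ for escaping $\gamma_2$ or $\alpha_1^{\pm}$ and a Riemann--Lebesgue convolution argument for $\gamma_1\to\pm\infty$. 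You instead argue entirely in physical space via the random-matrix convolution decomposition of Remark \ref{DescriptionofErgodicmeasuresremark}. That is more probabilistically transparent, but it trades the clean product structure of $F_{\omega}$ --- whose bounds are monotone in each parameter and insensitive to the others --- for a genuine book-keeping task about sums of random matrices: the corner of a deterministic shift plus a GUE block plus independent rank-one perturbations does not factor, so you have to rule out cancellation when several parameters escape simultaneously (for instance $\gamma_1\to+\infty$ and $\alpha_1^+\to\infty$ at comparable rates, where the shifts $\gamma_1\textnormal{Id}$ and $-\alpha_1^+\textnormal{Id}$ partially cancel and escape is rescued only by the rank-one perturbation and the mean-zero structure of the elementary summands). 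You flag this concern but leave it unresolved; it is resolvable --- pass to a subsequence along which one parameter dominates and then track the trace (equivalently the extreme eigenvalue) of the corner, which almost surely blows up --- yet the paper's Fourier route sidesteps the case analysis entirely, and for that reason is the tidier path.
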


We start by recalling the following crucial observation originally made (in published form) by Borodin and Olshanski in \cite{BorodinOlshanskiErgodic} (see graph of spectra remarks pages 30-31 of \cite{BorodinOlshanskiErgodic}). Let $\mathfrak{M}$ be any $\mathbb{U}(\infty)$ invariant probability measure on $H$ and let $\mu^{\mathfrak{M}}_N=\left(\mathsf{eval}_N\circ \pi_N^{\infty}\right)_*\mathfrak{M}$ be the (ordered) radial part of the projection $(\pi_N^{\infty})_*(\mathfrak{M})$ of $\mathfrak{M}$ on $H(N)$, i.e a measure on $W^N$. Then, $\forall N\ge 1$,
\begin{align*}
\mu^{\mathfrak{M}}_{N+1}\Lambda_N^{N+1}=\mu^{\mathfrak{M}}_N.
\end{align*}
Conversely, any coherent sequence of probability measures $\{\mu_N\}_{N\ge 1}$ comes from a $\mathbb{U}(\infty)$ invariant measure $\mathfrak{M}$. A proof of these statements immediately follows also from Lemma 3.3 and Lemma 3.8 of \cite{Defosseux} for example (see also Proposition 3.1 of \cite{OlshanskiProjections}). Thus, there exists a bijection between coherent measures and $\mathbb{U}(\infty)$ invariant probability measures on $H$. More formally, we have that  $\mathcal{M}_p\left(\Omega\right)=\mathcal{M}_p\left(\mathcal{M}_p^{\mathbb{U}(\infty)-erg}\left(H\right)\right)=\mathcal{M}_p^{\mathbb{U}(\infty)-inv}\left(H\right)$. With this identification consider the map between convex sets $\Phi:\mathcal{M}_p\left(\Omega\right)\to \underset{\leftarrow}{\lim}\mathcal{M}_p(W^N) $:
\begin{align*}
\Phi\left(\mathfrak{M}\right)=\big\{ \left(\mathsf{eval}_N\circ \pi_N^{\infty}\right)_*\mathfrak{M}\  \big\}_{N\ge 1},
\end{align*}
which is an affine bijection and hence we have the following lemma
(the reader obviously notices that all the hard work is transferred from Theorem \ref*{ergodic}, which we are essentially reinterpreting following \cite{BorodinOlshanskiErgodic}),
\begin{lem}\label{bijection}
We have a bijection between $\Omega$ and $\textnormal{Ex}\left(\underset{\leftarrow}{\lim}\mathcal{M}_p\left(W^N\right)\right)$.
\end{lem}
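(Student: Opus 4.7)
The plan is to establish the bijection by showing that $\Phi$ is an affine isomorphism of convex sets, so that it must carry extreme points to extreme points, and then identify the extreme points on each side.

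First, I would verify that $\Phi: \mathcal{M}_p\left(\mathcal{M}_p^{\mathbb{U}(\infty)\textnormal{-erg}}(H)\right) \to \underset{\leftarrow}{\lim}\mathcal{M}_p(W^N)$ is well-defined and affine. Well-definedness amounts to checking that for any $\mathbb{U}(\infty)$-invariant $\mathfrak{M}$ on $H$, the sequence $\{(\mathsf{eval}_N\circ\pi^{\infty}_N)_\ast\mathfrak{M}\}_{N\ge 1}$ satisfies the coherence relation $\mu_{N+1}\Lambda_N^{N+1}=\mu_N$; this is precisely the compatibility already recalled from \cite{Defosseux} and \cite{OlshanskiProjections} via the matrix-integral interpretation \eqref{VandermondeKernelMatrixRep}. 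Affineness is immediate because both the projection $\pi_N^\infty$ and the eigenvalue map $\mathsf{eval}_N$ are measurable maps, and pushforward is linear in the measure.

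Next I would argue that $\Phi$ is bijective. Injectivity follows from the fact that a $\mathbb{U}(\infty)$-invariant measure on $H$ is determined by its characteristic function on $H(\infty)$, and by $\mathbb{U}(\infty)$-invariance this is in turn determined by the radial projections on each $H(N)$, i.e.\ by the sequence $\{\mu^{\mathfrak{M}}_N\}_{N\ge 1}$. Surjectivity is the content of the converse direction already stated in the paragraph preceding the lemma: every coherent sequence of probability measures on $\{W^N\}$ arises from some $\mathbb{U}(\infty)$-invariant probability measure on $H$ (Kolmogorov-type extension on $H$, combined with the conditional eigenvalue description). Together these give that $\Phi$ is an affine bijection between the two convex sets.

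With $\Phi$ an affine bijection, it restricts to a bijection on extreme points. On the inverse-limit side the extreme points are, by construction, $W^\infty=\textnormal{Ex}(\underset{\leftarrow}{\lim}\mathcal{M}_p(W^N))$. On the source side, note that the extreme points of $\mathcal{M}_p(X)$ for any standard Borel space $X$ are exactly the Dirac measures $\delta_x$, $x\in X$; applying this with $X=\mathcal{M}_p^{\mathbb{U}(\infty)\textnormal{-erg}}(H)$ and using Theorem \ref{ergodic}, which parametrizes $\mathcal{M}_p^{\mathbb{U}(\infty)\textnormal{-erg}}(H)$ by $\Omega$, identifies these extreme points with $\Omega$ itself. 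Concretely, $\omega\in\Omega$ corresponds to the coherent sequence $\{(\mathsf{eval}_N\circ\pi_N^\infty)_\ast M_\omega\}_{N\ge 1}$, and this assignment is the desired bijection $\Omega \cong \textnormal{Ex}(\underset{\leftarrow}{\lim}\mathcal{M}_p(W^N))$.

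The only substantive step is the surjectivity of $\Phi$, i.e.\ reconstructing a $\mathbb{U}(\infty)$-invariant measure on $H$ from a coherent sequence on the Weyl chambers; all remaining ingredients (affineness of $\Phi$, the description of extreme points of $\mathcal{M}_p(X)$, and the parametrization of ergodic measures by $\Omega$) are either formal or already cited, so the main obstacle is really bookkeeping rather than genuinely new mathematics, and the author explicitly flags this by saying that the work has already been done inside Theorem \ref{ergodic}.
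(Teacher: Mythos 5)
Your proposal is correct and follows essentially the same route as the paper: define $\Phi$ as the pushforward map, observe it is an affine bijection between $\mathcal{M}_p^{\mathbb{U}(\infty)\text{-inv}}(H)\cong\mathcal{M}_p(\Omega)$ and $\underset{\leftarrow}{\lim}\mathcal{M}_p(W^N)$ using the recalled coherence facts, then restrict to extreme points and identify them on each side via Dirac measures and Theorem~\ref{ergodic}. The paper simply compresses these steps into a couple of lines, but the logical content and the explicit acknowledgement that all the substance lives in Theorem~\ref{ergodic} agree with what you wrote.
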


We make this more explicit and we begin by defining the following Markov kernels $\Lambda_N^{\infty}$ from $\Omega$ to $W^N$ for $\omega\in \Omega$ with $\gamma_2(\omega)>0$,
\begin{align}\label{boundarykernel}
\Lambda_N^{\infty}(\omega,dx)=\left(\prod_{k=1}^{N-1}\frac{1}{k!}\right)\det \left(\phi_{\omega}^{(j-1)}(x_{N+1-i})\right)^N_{i,j=1}\Delta_N(x)dx,
\end{align}
from $\Omega$ to $W^N$ where $\phi_{\omega}$ as before is such that $\hat{\phi}_{\omega}(\xi)=F_{\omega}(\xi)$. Obviously, $\Lambda_N^{\infty}(\cdot,dx)$ is measurable on $\Omega$. Moreover, the positivity property, $\Lambda_N^{\infty}(\omega,dx) \ge 0$,  immediately follows from the fact that $\phi_{\omega}$ is extended totally positive. To obtain the following coherency relation
\begin{align*}
\Lambda_{N+1}^{\infty}\Lambda_N^{N+1}=\Lambda_N^{\infty},
\end{align*}
observe that,
\begin{align*}
\Lambda_{N+1}^{\infty}\Lambda_N^{N+1}(\omega,dy)&=\left(\prod_{k=1}^{N-1}\frac{1}{k!}\right)\Delta_{N}(y)\int_{-\infty}^{y_1}\cdots \int_{y_N}^{\infty}\det \left(\phi_{\omega}^{(j-1)}(x_{N+2-i})\right)^{N+1}_{i,j=1}dx_1\cdots dx_{N+1}dy\\
&=\left(\prod_{k=1}^{N-1}\frac{1}{k!}\right)\Delta_{N}(y)\det \left(\phi_{\omega}^{(j-1)}(y_{N+1-i})\right)^N_{i,j=1}dy.
\end{align*}
To see this first note that the integral is equal to:
\begin{align*}
\det \begin{bmatrix}
&\int_{y_N}^{\infty}\phi_{\omega}(x_{N+1})dx_{N+1}&\cdots&-\phi^{(N-1)}_{\omega}(y_N)\\
&\vdots&\ddots&\vdots\\
&\int_{y_1}^{y_2}\phi_{\omega}(x_2)dx_2&\cdots&\phi^{(N-1)}_{\omega}(y_2)-\phi^{(N-1)}_{\omega}(y_1)\\
&\int_{-\infty}^{y_1}\phi_{\omega}(x_1)dx_1&\cdots&\phi^{(N-1)}_{\omega}(y_1)
\end{bmatrix}_{(N+1) \times (N+1)}.
\end{align*}
Now successively add row $i$ to row $i-1$, starting from $i=N+1$. The identity then follows from the fact that the first row has a $1$ as its first entry since $\int_{-\infty}^{\infty}\phi_{\omega}(x)dx=1$ and 0's elsewhere.
Finally, to see that $\Lambda_N^{\infty}$ is correctly normalized, i.e. $\Lambda_N^{\infty}1=1$, observe that from the coherency relation $\Lambda_N^{\infty}\Lambda_1^N=\Lambda^{\infty}_1$ and the facts that $\Lambda_1^N1=1$ and $\Lambda_1^{\infty}1=1$ this follows immediately.

We now extend the definition to arbitrary $\omega \in \Omega$. We first observe, that in fact for $\omega$ with $\gamma_2(\omega)>0$ if we consider the measure $M_{\omega}(dX)$ on $H$ with characteristic function $F_{\omega}$ as in Theorem \ref{ergodic}, then (see proof of Theorem 7.7 of \cite{OlshanskiVershik}) $\Lambda^{\infty}_{N}(\omega,dx)$ is the radial part of its projection on $H(N)$, more formally $\Lambda^{\infty}_{N}(\omega,dx)=\left(\mathsf{eval}_N\circ \pi_N^{\infty}\right)_*M_{\omega}(dx)$. In particular, for any $\omega\in \Omega$ we can define $\Lambda^{\infty}_{N}(\omega,dx)$ as the radial part of the projection of $M_{\omega}$ on $H(N)$ or equivalently as the unique weak limit, this essentially follows from Levy's continuity theorem and will also be detailed in Lemma \ref{FellerBoundary} below, as $\omega_{\gamma_2}(n)\to \omega$ (where $\{\omega_{\gamma_2}(n)\}_n$ is any sequence in $\Omega$ such that $\gamma_2\left(\omega_{\gamma_2}(n)\right)>0$ and $\omega_{\gamma_2}(n)\to \omega$) of $\Lambda^{\infty}_{N}(\omega_{\gamma_2}(n),dx)$ namely,
\begin{align*}
\Lambda^{\infty}_{N}(\omega_{\gamma_2}(n),dx)\overset{}{\rightharpoonup
}\Lambda^{\infty}_{N}(\omega,dx), \ \textnormal{as} \ n \to \infty.
\end{align*}

Hence, we have obtained the following lemma,
\begin{lem}\label{consistency}
For all $\omega\in \Omega, N\ge 1$ the kernels $\Lambda^{\infty}_N(\omega,dx)$ are Markov and satisfy,
\begin{align*}
\Lambda^{\infty}_{N+1}\Lambda^{N+1}_{N}=\Lambda_N^{\infty}.
\end{align*}
\end{lem}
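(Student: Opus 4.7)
The plan is to establish both claims — the Markov property and the coherency relation — first in the regime $\gamma_2(\omega) > 0$, where the explicit determinantal formula (\ref{boundarykernel}) is available, and then extend to arbitrary $\omega \in \Omega$ by weak-limit approximation.

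For $\omega$ with $\gamma_2(\omega) > 0$, essentially all the required identities have already been laid out in the preamble to the lemma. Non-negativity of the density in (\ref{boundarykernel}) is extended total positivity of $\phi_{\omega}$; the coherency $\Lambda_{N+1}^{\infty}\Lambda_N^{N+1}=\Lambda_N^{\infty}$ is exactly the row-reduction identity already performed on the $(N+1)\times (N+1)$ determinant, where $\int \phi_{\omega} = 1$ is used at the final step. The normalization $\Lambda_N^{\infty} 1 = 1$ follows by composing with $\Lambda_1^N$: coherency gives $\Lambda_N^{\infty}\Lambda_1^N = \Lambda_1^{\infty}$, and both $\Lambda_1^N 1 = 1$ and $\Lambda_1^{\infty}1 = 1$ (the latter again being $F_{\omega}(0)=1$) are immediate. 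Measurability in $\omega$ is plain here since $\omega \mapsto \phi_{\omega}^{(j-1)}(x)$ is continuous.

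For general $\omega \in \Omega$, fix a sequence $\{\omega_{\gamma_2}(n)\}$ in $\Omega$ with $\gamma_2(\omega_{\gamma_2}(n)) > 0$ converging to $\omega$ (one may simply perturb $\delta$ upward by $1/n$). By the joint continuity of $(\omega, x) \mapsto F_{\omega}(x)$ noted earlier and Levy's continuity theorem applied on each finite-dimensional $H(N)$, the pushforwards $(\pi_N^{\infty})_* M_{\omega_{\gamma_2}(n)}$ converge weakly to $(\pi_N^{\infty})_* M_{\omega}$; since $\mathsf{eval}_N$ is continuous, so do their radial parts, giving $\Lambda_N^{\infty}(\omega_{\gamma_2}(n), \cdot) \rightharpoonup \Lambda_N^{\infty}(\omega, \cdot)$. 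This yields the Markov property for general $\omega$ directly: the weak limit of probability measures is a probability measure, and measurability of $\omega \mapsto \Lambda_N^{\infty}(\omega, \mathcal{Y})$ follows from continuity of $\omega \mapsto \int f\, d\Lambda_N^{\infty}(\omega,\cdot)$ for $f \in C_0(W^N)$ together with a standard monotone-class argument.

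To extend the coherency relation, begin with $\Lambda_{N+1}^{\infty}(\omega_{\gamma_2}(n), \cdot)\Lambda_N^{N+1} = \Lambda_N^{\infty}(\omega_{\gamma_2}(n), \cdot)$ and pair both sides with an arbitrary $f \in C_0(W^N)$. The right-hand side converges to $\int f\, d\Lambda_N^{\infty}(\omega, \cdot)$ by the previous step. For the left-hand side, the Feller property of $\Lambda_N^{N+1}$ from Lemma \ref{Feller1} guarantees that $\Lambda_N^{N+1} f \in C_0(W^{N+1})$, so weak convergence of $\Lambda_{N+1}^{\infty}(\omega_{\gamma_2}(n), \cdot)$ tested against this function gives convergence to $\int (\Lambda_N^{N+1} f)\, d\Lambda_{N+1}^{\infty}(\omega, \cdot)$. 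Equating the two limits for every $f \in C_0(W^N)$ yields the desired coherency. The main obstacle throughout is the potential escape of mass to infinity when interchanging weak limits with the composition $\Lambda_{N+1}^{\infty}\Lambda_N^{N+1}$ on the non-compact chambers $W^N$; it is precisely this that the full Feller property of $\Lambda_N^{N+1}$ (not merely continuity of test integrals) is designed to control, which is why Lemma \ref{Feller1} was set up beforehand.
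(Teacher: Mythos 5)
Your proof is correct and takes essentially the same approach as the paper: the explicit determinantal row-reduction identity gives coherency for $\gamma_2(\omega)>0$, extended total positivity gives non-negativity, composing with $\Lambda_1^N$ gives the normalization, and weak-limit approximation extends to general $\omega$. One remark on your final sentence: the paper has an alternative and slightly cleaner route to coherency at arbitrary $\omega$ that sidesteps Lemma~\ref{Feller1} entirely --- since $\Lambda_N^\infty(\omega,\cdot)=\left(\mathsf{eval}_N\circ\pi_N^\infty\right)_* M_\omega$ and, as noted just before Lemma~\ref{bijection}, the radial projections of \emph{any} $\mathbb{U}(\infty)$-invariant measure are automatically coherent, the relation holds with no limiting argument at all. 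Your weak-limit version also works, but the closing diagnosis is slightly off: weak convergence of probability measures (tested against bounded continuous functions) already precludes escape of mass to infinity, and $\Lambda_N^{N+1}f$ is bounded for free because $\Lambda_N^{N+1}$ is a Markov kernel and $f$ is bounded; what Lemma~\ref{Feller1} supplies that you genuinely need is only the \emph{continuity} of $\Lambda_N^{N+1}f$, while its membership in $C_0$ is a stronger conclusion than this step requires.
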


Note that, see Remark \ref{DescriptionofErgodicmeasuresremark}, for $\gamma_1(\omega),\alpha_i^+(\omega),\alpha_i^-(\omega)=0$ then $\Lambda_N^{\infty}(\omega,dx)$ is just the $N$-particle $GUE$ with variance $\gamma_2$. Moreover, for $\gamma_2(\omega),\alpha_i^+(\omega),\alpha_i^-(\omega)=0$ then $\Lambda_N^{\infty}(\omega,dx)$ is the delta measure on the scalar matrix $\gamma_1(\omega) \textnormal{Id}_N$, in particular $\Lambda_N^{\infty}(\omega,dx)$ need not have a smooth density with respect to Lebesgue measure. As already anticipated, these kernels are Feller,

\begin{lem}\label{FellerBoundary}
The kernels $\{\Lambda_N^{\infty}\}_{N\ge 1}$ are Feller.
\end{lem}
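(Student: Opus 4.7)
The plan is to verify separately that for every $f\in C_0(W^N)$ the function $\Lambda_N^\infty f$ is (a) continuous on $\Omega$ and (b) vanishes at infinity, using throughout the identification $\Lambda_N^\infty(\omega,\cdot)=(\mathsf{eval}_N\circ \pi_N^\infty)_*M_\omega$ recorded just above the lemma.

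For (a), I would rely on L\'evy's continuity theorem applied to the probability measures $(\pi_N^\infty)_*M_\omega$ on $H(N)\cong\mathbb{R}^{N^2}$. By Theorem \ref{ergodic} combined with the $\mathbb{U}(N)$-invariance of $(\pi_N^\infty)_*M_\omega$, the characteristic function of this projection at any $A\in H(N)$ is $\prod_{j=1}^N F_\omega(\lambda_j(A))$, depending only on the eigenvalues of $A$. The pointwise-in-$r$ continuity of $\omega\mapsto F_\omega(r)$ recorded in the excerpt, together with the continuity of $A\mapsto \lambda_j(A)$, implies that if $\omega_n\to\omega$ in $\Omega$ then these characteristic functions converge pointwise on $H(N)$ to a function continuous at the origin with value $1$. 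L\'evy's theorem then yields $(\pi_N^\infty)_*M_{\omega_n}\rightharpoonup(\pi_N^\infty)_*M_\omega$ weakly on $H(N)$, and composing with the continuous eigenvalue map $\mathsf{eval}_N$ gives $\Lambda_N^\infty(\omega_n,\cdot)\rightharpoonup\Lambda_N^\infty(\omega,\cdot)$. Integrating the bounded continuous function $f$ concludes (a).

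For (b), I would first observe that $\omega_n$ leaves every compact subset of $\Omega$ iff, passing to a subsequence, either $|\gamma_{1,n}|\to\infty$ or $\delta_n\to\infty$. Indeed, if both sequences were bounded then $\alpha_{k,n}^\pm\le\sqrt{\delta_n}$ would also be uniformly bounded in every coordinate $k$, and a diagonal extraction would produce a limit in $\Omega$ (the defining inequality $\sum(\alpha_k^\pm)^2\le\delta$ being preserved under coordinatewise limits). Using the operator-norm bound $\max_i|\lambda_i(X)|\ge |X_{11}|$ valid for any $X\in H(N)$, the proof reduces to showing that the law of $X_{11}$ under $X\sim M_{\omega_n}$ escapes to infinity on $\mathbb{R}$; this scalar has characteristic function exactly $F_{\omega_n}$ by the $N=1$ case of Theorem \ref{ergodic}. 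In the regime $\delta_n\to\infty$ I would apply the elementary bound $\log(1+y)\ge y/(1+y)$ to
\begin{align*}
-\log|F_\omega(r)|^2=\gamma_2 r^2+\sum_k\log\bigl(1+(\alpha_k^+)^2r^2\bigr)+\sum_k\log\bigl(1+(\alpha_k^-)^2r^2\bigr)
\end{align*}
and split according as $\alpha_{1,n}^\pm$ stays bounded (then the resulting uniform lower bound of order $r^2\sum(\alpha_{k,n}^\pm)^2/(1+Cr^2)$ diverges, since $\gamma_{2,n}+\sum(\alpha_{k,n}^\pm)^2=\delta_n\to\infty$) or blows up (then a single term already diverges), yielding $|F_{\omega_n}(r)|\to 0$ for every $r\ne 0$. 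Pointwise decay of the characteristic function off the origin, combined with Parseval's identity and dominated convergence against a Schwartz test function, forces the laws of $X_{11}$ to converge vaguely to zero. In the complementary regime $\delta_n$ bounded and $|\gamma_{1,n}|\to\infty$, Remark \ref{DescriptionofErgodicmeasuresremark} writes $X_{11}=\gamma_{1,n}+Y_n$ with $Y_n$ tight (mean $0$, variance $\delta_n=O(1)$), and the translated law escapes directly.

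Combining the two regimes gives $\Lambda_N^\infty(\omega_n,K)\to 0$ for every compact $K\subset W^N$, and a standard $\epsilon/C_0$ argument then closes (b). The step I anticipate being the most delicate is the analysis of $\omega_n\to\infty$ in $\Omega$: the infinite-dimensional tails $(\alpha_k^\pm)_{k\ge 1}$ mean that $\delta_n\to\infty$ comprises three qualitatively different scenarios (growth of $\gamma_{2,n}$, of a single dominant $\alpha_1^\pm$, or of the whole tail spreading out), but the uniform estimate on $\log(1+y)$ is precisely what handles all three in one stroke.
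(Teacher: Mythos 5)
Your proof is correct but follows a genuinely different route from the paper. The paper invokes the Olshanski--Vershik Plancherel identity to write $(\Lambda_N^\infty f)(\omega)=\mathrm{Const}\int_{\mathbb{R}^N}\Delta_N^2(x)\prod_jF_\omega(x_j)\overline{\hat f}(x)\,dx$ for Schwartz $f$, and then argues both continuity and decay on this Fourier-side expression directly, treating the escape to infinity of $\omega$ by an explicit case list ($\gamma_2\to\infty$: bound $|F_\omega(y)|\le e^{-\gamma_2y^2/2}$; $\alpha_1^\pm\to\infty$: bound by $(1+(\alpha_1^\pm y)^2)^{-1/2}$; $\gamma_1\to\pm\infty$: a Riemann--Lebesgue/convolution argument). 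You bypass the Plancherel identity entirely: continuity via L\'evy's theorem on $H(N)\cong\mathbb{R}^{N^2}$ followed by pushforward under the continuous map $\mathsf{eval}_N$, and decay via the scalar reduction $|\lambda_{\max}(X)|\ge|X_{11}|$, whose characteristic function is exactly $F_\omega$. Your uniform estimate $\log(1+y)\ge y/(1+y)$ is the key gain: it turns $\delta_n\to\infty$ directly into pointwise decay of $F_{\omega_n}$ in one stroke, covering not only the paper's cases $\gamma_{2,n}\to\infty$ and $\alpha_{1,n}^\pm\to\infty$ but also the tail-spreading scenario $\sum_k(\alpha_{k,n}^\pm)^2\to\infty$ with $\gamma_{2,n}$ and $\alpha_{1,n}^\pm$ bounded (e.g. $\alpha_{k,n}^+=\mathbf 1_{k\le n}$, $\gamma_2=\gamma_1=0$), which is not addressed by the paper's stated casework; your approach is therefore not merely more elementary but strictly more complete on the vanishing-at-infinity step. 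The paper's Fourier-side formula has the compensating virtue of making the $\omega$-dependence explicit in a single integral, which would be preferable if one wanted quantitative continuity bounds rather than the soft statement here.
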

\begin{proof}
We want to show that $(\Lambda_N^{\infty}f)(\omega)\in C_0(\Omega)$ whenever $f\in C_0(W^N)$. By the density of the Schwartz functions $\mathcal{S}(W^N)$ (smooth with all derivatives decreasing faster than any inverse power of $x$ as $x \to \pm \infty$) in $C_0(W^N)$ it suffices to check this for $f \in \mathcal{S}(W^N)$. The following equality, which is a multidimensional version of the usual Plancherel theorem, is the key tool. It is also the main content of the proof of Theorem 7.7 of Olshanski and Vershik \cite{OlshanskiVershik} and is the equality of displays 7.10 and 7.18 therein. For $\omega$ with $\gamma_2(\omega)>0$,
\begin{align*}
const \times \int_{W^N}^{}\det \left(\phi_{\omega}^{(j-1)}(x_{N+1-i})\right)^N_{i,j=1}\Delta_N(x)f(x)dx=\int_{\mathbb{R}^N}^{}\Delta^2_N(x)F_{\omega}(x_1)\cdots F_{\omega}(x_N)\bar{\hat{f}}(x)dx,
\end{align*}
where $const$ is a positive constant whose exact value will not be important in what follows. Thus, by going to Fourier space we can relate $(\Lambda_N^{\infty}f)(\omega)$ to the functions $F_{\omega}$ for which we have explicit expressions,
\begin{align}\label{Fourier}
(\Lambda_N^{\infty}f)(\omega)=Const \times \int_{\mathbb{R}^N}^{}\Delta^2_N(x)F_{\omega}(x_1)\cdots F_{\omega}(x_N)\bar{\hat{f}}(x)dx.
\end{align}
Furthermore, recall that the Fourier transform $\hat{f}$ of $f \in \mathcal{S}(W^N)$ is still in $\mathcal{S}(W^N)$. Now, observe that (\ref{Fourier})  makes sense for arbitrary $\omega$, even with $\gamma_2(\omega)=0$. Similarly, in order to show continuity in general, first suppose $\omega_n \to \omega$ then, since for any fixed $x\in \mathbb{R}$,  $F_{\omega}(x)$ as a function of $\omega\in \Omega$ is continuous:
\begin{align*}
\Delta^2_N(x)F_{\omega_n}(x_1)\cdots F_{\omega_n}(x_N)\bar{\hat{f}}(x)\to \Delta^2_N(x)F_{\omega}(x_1)\cdots F_{\omega}(x_N)\bar{\hat{f}}(x) \ \ a.e. \ ,
\end{align*}
and thus, by the dominated convergence theorem,
\begin{align*}
(\Lambda_N^{\infty}f)(\omega_n) \to (\Lambda_N^{\infty}f)(\omega).
\end{align*}
Now, in order to show that $(\Lambda_N^{\infty}f)(\omega)$ vanishes as $\omega \to \infty$ we note that $\omega \to \infty$ is actually equivalent to any combination of the following cases, $\gamma_1 \to \pm \infty $ or $\gamma_2 \to \infty$ or $\alpha_1^{\pm} \to \infty$. Observe that any of these possibilities can occur on its own. First, suppose that $\gamma_2 \to \infty$. We see that, since $\hat{f} \in \mathcal{S}(W^N)$, there exists $R<\infty$ such that,
\begin{align*}
\int_{x \notin [-R,R]^N}^{}|\Delta^2_N(x)F_{\omega}(x_1)\cdots F_{\omega}(x_N)\bar{\hat{f}}(x)|dx \lesssim \epsilon.
\end{align*}
And thus,
\begin{align*}
(\Lambda_N^{\infty}f)(\omega) \lesssim \epsilon + \int_{ [-R,R]^N}^{}\left|\Delta^2_N(x)F_{\omega}(x_1)\cdots F_{\omega}(x_N)\bar{\hat{f}}(x)\right|dx \lesssim \epsilon + \left(\int_{-R}^{R}\left|F_{\omega}(y)\right|dy\right)^N.
\end{align*}
But we have,
\begin{align*}
\left|F_{\omega}(y)\right| \le e^{-\frac{\gamma_2}{2}y^2} \ \ \textnormal{in} \ \ [-R,R],
\end{align*}
and so $\left|F_{\omega}(y)\right| \to 0$ as $\gamma_2 \to \infty$ $\forall y \in [-R,R]\backslash \{0\}$ and $\left|F_{\omega}(0)\right|=1$ (in particular bounded). Hence, using the dominated convergence theorem we obtain,
\begin{align*}
\left(\int_{-R}^{R}\left|F_{\omega}(y)\right|dy\right)^N  \to 0 \ \ \textnormal{as}\ \ \gamma_2 \to \infty.
\end{align*}
Of course the integral above can be explicitly calculated in terms of the error function from which the result is evident as well. Now, in order to show that $(\Lambda_N^{\infty}f)(\omega)$ vanishes as $\alpha_1^{\pm} \to \infty$  we follow the same argument, except that now we use the bound,
\begin{align*}
\left|F_{\omega}(y)\right| \le \frac{1}{\sqrt{1+\left(\alpha_1^+y\right)^2}}\frac{1}{\sqrt{1+\left(\alpha_1^-y\right)^2}} \ \ \textnormal{in} \ \ [-R,R],
\end{align*}
and thus $\left|F_{\omega}(y)\right| \to 0$ as either $\alpha_1^{\pm} \to \infty$, $\forall y \in [-R,R]\backslash \{0\}$ from which the claim follows. We finally assume that $\gamma_1 \to \pm\infty$ and take a different approach. First, we write $\Lambda_N^{\infty}f$ as follows, viewing it as a function of $\gamma_1$,
\begin{align*}
(\Lambda_N^{\infty}f)(\gamma_1)=Const \times \int_{\mathbb{R}^N}^{}e^{i\gamma_1x_1+\cdots+i\gamma_1x_N}\left(\prod_{j=1}^{N}e^{-\frac{\gamma_2}{2}x_j^ 2}\right)\prod_{k=1}^{\infty}\left(\prod_{j=1}^{N}\frac{e^{-i\alpha_k^+x_j}}{1-i\alpha_k^+x_j}\right)\prod_{k=1}^{\infty}\left(\prod_{j=1}^{N}\frac{e^{i\alpha_k^-x_j}}{1+i\alpha_k^-x_j}\right)\left(\Delta^2_N(x)\bar{\hat{f}}(x)\right)dx
\end{align*}
and noting that this is exactly \textit{Fourier inversion} of a product which is given in terms of a convolution up to some numerical constant $\tilde{C}$ as follows,
\begin{align*}
(\Lambda_N^{\infty}f)(\gamma_1)=\tilde{C} \times \left(\phi^{\otimes N}_{\gamma_2}*\phi^{\otimes N}_{\alpha^+_1}*\cdots*\phi^{\otimes N}_{\alpha^-_1}*\cdots * g\right)(\gamma_1,\cdots,\gamma_1),
\end{align*}
where $g \in \mathcal{S}(W^N)$ is such that $\hat{g}(\xi)=\Delta^2_N(\xi)\bar{\hat{f}}(\xi)$ .The fact that $(\Lambda_N^{\infty}f)(\gamma_1) \to 0$, as $\gamma_1\to \pm\infty$ now follows, since it is a convolution of $L^1(\mathbb{R}^N)$ functions (in fact it is a Schwartz function). We finally remark that the argument above is essentially just the Riemann-Lebesgue lemma.
\end{proof}

We are finally ready to provide a full proof of Proposition \ref{boundary},
\begin{proof}[Proof of Proposition \ref{boundary}]
By making use of Lemmas \ref{bijection}, \ref{consistency} and \ref{FellerBoundary} we get that the map $\Lambda^{\infty}:\Omega\to \textnormal{Ex}\left(\underset{\leftarrow}{\lim}\mathcal{M}_p\left(W^N\right)\right)$ is a continuous (part of the statement of Lemma \ref{FellerBoundary}) bijection. We obtain that it is actually a Borel isomorphism by Theorem 3.2 in \cite{Mackey}, which states that a Borel one to one map from a standard Borel space onto a subset of a countably generated Borel space is a Borel isomorphism or in this particular setting see Proposition 9.4 of \cite{BorodinOlshanskiErgodic}. This extends to a Borel isomorphism between $\mathcal{M}_p\left(\Omega\right)$ and $\underset{\leftarrow}{\lim}\mathcal{M}_p\left(W^N\right)$ by making use of Theorem 9.1 of \cite{BorodinOlshanskiErgodic} (or more generally the ergodic decomposition theorem for actions of inductively compact groups of Bufetov, namely Theorem 1 in \cite{Bufetov}, see also the proof of Theorem 4.1.3 of \cite{Winkler}). Finally, the Feller assertion follows from Lemmas \ref{Feller1} and \ref{FellerBoundary}.
\end{proof}

\subsection{Markov Processes on the boundary} 

We now, briefly recall the Borodin Olshanski formalism (see in particular Section 2 of \cite{BorodinOlshanski} for detailed proofs), the so called\textit{ method of intertwiners}, for constructing Markov processes on the boundary $\Omega$ ($\Omega$ could in more generality be any locally compact metrizable topological space with a countable base which arises as the $Feller$ boundary of some projective sequence $\{E_N\}_{N\ge 1}$ in the sense described above). 

Hence, let $\{P_N(t)\}_{N\ge 1}$ be a sequence of Markov semigroups on $W^N$ consistent with the \textit{Feller} links above namely,
\begin{align*}
P_{N+1}(t)\Lambda_N^{N+1}=\Lambda_N^{N+1}P_N(t) \ ,\ \forall{t} \ge 0, \ \forall N\ge 1.
\end{align*}

Then, we have the following theorem, proven as Proposition 2.4 in \cite{BorodinOlshanski} (or more precisely a special case of that result applied to our situation),
\begin{thm}\label{BOFormalism}
There exists a unique Markov semigroup $P_{\infty}(t)$ on $\Omega$ so that $\forall N\ge 1$ we have $\forall t \ge 0$,
\begin{align*}
P_{\infty}(t)\Lambda^{\infty}_{N}=\Lambda^{\infty}_{N}P_N(t).
\end{align*}
Moreover, in case the semigroups $P_N(t)$ are Feller then so is $P_{\infty}(t)$.
\end{thm}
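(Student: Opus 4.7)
The plan is to use the identification $\mathcal{M}_p(\Omega) \cong \underset{\leftarrow}{\lim}\mathcal{M}_p(W^N)$ established in the proof of Proposition \ref{boundary} to transfer the dynamics from the finite levels to the boundary. Concretely, I would define $P_\infty(t)(\omega, \cdot)$ as the unique probability measure on $\Omega$ whose projection to each level $N$ is $\Lambda_N^\infty(\omega, \cdot) P_N(t)$. The first task is to check this is well posed: for fixed $\omega \in \Omega$ and $t \geq 0$, the family $\nu_N^{\omega,t} := \Lambda_N^\infty(\omega, \cdot) P_N(t)$ is a coherent sequence, since by the assumed intertwining and Lemma \ref{consistency},
\[
\nu_{N+1}^{\omega,t} \Lambda_N^{N+1} = \Lambda_{N+1}^\infty(\omega, \cdot) P_{N+1}(t) \Lambda_N^{N+1} = \Lambda_{N+1}^\infty(\omega, \cdot) \Lambda_N^{N+1} P_N(t) = \Lambda_N^\infty(\omega, \cdot) P_N(t) = \nu_N^{\omega,t}.
\]
The Borel isomorphism $\mathcal{M}_p(\Omega) \cong \underset{\leftarrow}{\lim}\mathcal{M}_p(W^N)$ then produces a unique probability measure $P_\infty(t)(\omega, \cdot)$ on $\Omega$. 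Measurability of $\omega \mapsto P_\infty(t)(\omega, A)$ follows because $\omega \mapsto \Lambda_N^\infty(\omega, \cdot) P_N(t)$ is Borel (as a composition of Markov kernels) and the bijection above is a Borel isomorphism.

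I would establish uniqueness and the semigroup property simultaneously. If $\tilde{P}_\infty(t)$ were another Markov kernel satisfying $\tilde{P}_\infty(t) \Lambda_N^\infty = \Lambda_N^\infty P_N(t)$, then for each $\omega$ both $\tilde{P}_\infty(t)(\omega, \cdot)$ and $P_\infty(t)(\omega, \cdot)$ would yield the same coherent sequence and therefore must coincide. In particular, since
\[
P_\infty(t) P_\infty(s) \Lambda_N^\infty = P_\infty(t) \Lambda_N^\infty P_N(s) = \Lambda_N^\infty P_N(t) P_N(s) = \Lambda_N^\infty P_N(t+s),
\]
uniqueness forces $P_\infty(t+s) = P_\infty(t) P_\infty(s)$.

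For the Feller assertion, assume the $P_N(t)$ are Feller. For any $g \in C_0(W^N)$, Lemma \ref{FellerBoundary} yields $\Lambda_N^\infty g \in C_0(\Omega)$, while $P_N(t) g \in C_0(W^N)$ by assumption, so the intertwining gives
\[
P_\infty(t)(\Lambda_N^\infty g) = \Lambda_N^\infty P_N(t) g \in C_0(\Omega).
\]
Thus $P_\infty(t)$ preserves the linear span $\mathcal{A} := \bigcup_{N \geq 1} \Lambda_N^\infty[C_0(W^N)]$. Since a Markov operator is a uniform contraction on bounded measurable functions, the Feller property on all of $C_0(\Omega)$ will follow once $\mathcal{A}$ is shown to be uniformly dense in $C_0(\Omega)$, via approximation of arbitrary $f \in C_0(\Omega)$ by elements of $\mathcal{A}$ and completeness of $C_0(\Omega)$.

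The main obstacle is precisely this density statement. The natural approach is a Stone-Weierstrass argument on the one-point compactification of $\Omega$ applied to the unital subalgebra generated by $\mathcal{A}$: one must verify this subalgebra separates points of $\Omega$, which is extracted from Theorem \ref{ergodic} since distinct boundary points produce distinct projections on some finite level and $C_0(W^N)$ separates points of $W^N$. The vanishing-at-infinity hypothesis needed for Stone-Weierstrass is supplied by Lemma \ref{FellerBoundary}. Every other step is a formal manipulation of the intertwining relations, so this density argument, which relies essentially on the full Feller boundary structure of Proposition \ref{boundary}, is the only genuinely analytic ingredient.
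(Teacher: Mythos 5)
The paper gives no proof of this statement, deferring to Proposition 2.4 of \cite{BorodinOlshanski}, so your argument has to be judged on its own. Your construction of the kernel $P_\infty(t)(\omega,\cdot)$ via the coherent family $\nu_N^{\omega,t}=\Lambda_N^\infty(\omega,\cdot)P_N(t)$, the appeal to the Borel isomorphism $\mathcal{M}_p(\Omega)\cong\underset{\leftarrow}{\lim}\mathcal{M}_p(W^N)$ from the proof of Proposition \ref{boundary}, and the measurability, uniqueness and semigroup-law arguments are all correct and are indeed the route taken in \cite{BorodinOlshanski}.

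There is, however, a genuine gap in the Feller step. Stone--Weierstrass applied to the unital \emph{algebra} generated by $\mathcal{A}=\bigcup_{N}\Lambda_N^\infty[C_0(W^N)]$ gives density of that algebra, not density of the linear subspace $\mathcal{A}$ itself, and these are different things: $\mathcal{A}$ is not closed under pointwise products, since $(\Lambda_N^\infty f)(\omega)\,(\Lambda_N^\infty g)(\omega)$ is a bilinear functional of the measure $\Lambda_N^\infty(\omega,\cdot)\otimes\Lambda_N^\infty(\omega,\cdot)$ and has no reason to lie in any $\Lambda_M^\infty[C_0(W^M)]$. Your contraction argument needs a dense subspace that $P_\infty(t)$ maps into $C_0(\Omega)$; $P_\infty(t)$ preserves $\mathcal{A}$ but, being a Markov operator and hence not multiplicative, does not preserve the algebra $\mathcal{A}$ generates, so density of the algebra is unusable here. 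The correct argument is by duality rather than Stone--Weierstrass: if a finite signed Radon measure $\mu$ on $\Omega$ annihilates $\mathcal{A}$, writing $\mu=\mu_+-\mu_-$ for its Jordan decomposition one gets $\mu_+\Lambda_N^\infty=\mu_-\Lambda_N^\infty$ as finite measures on $W^N$ for every $N$ (by Riesz on $W^N$), hence $\mu_+(\Omega)=\mu_-(\Omega)$; after normalizing, the very bijection $\mathcal{M}_p(\Omega)\cong\underset{\leftarrow}{\lim}\mathcal{M}_p(W^N)$ you already invoked forces $\mu_+=\mu_-$, so $\mu=0$, and Hahn--Banach together with Riesz on $\Omega$ gives density of $\mathcal{A}$ in $C_0(\Omega)$. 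Your separation-of-points observation is morally the right ingredient, but it must be exploited through the injectivity of $\mu\mapsto(\mu\Lambda_N^\infty)_N$, not through an algebra $P_\infty(t)$ does not act on. (Strong continuity at $t=0$ also uses this density of $\mathcal{A}$ and is left unaddressed, but that is a minor omission once density is in hand.)
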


\paragraph{Invariant measures} It can be easily seen that, if $\forall N \ge 1$, $\mu_N$ is an invariant measure of $P_N(t)$ and these measures are compatible with the links then the measure $\mu$ on $\Omega$ given by,
\begin{align*}
\mu \Lambda^{\infty}_N=\mu_N,
\end{align*}
is invariant for $P_{\infty}(t)$. If furthermore, we assume that, $\forall N \ge 1$ $\mu_N$ is the \textit{unique} invariant measure of $P_N(t)$ (in such case, compatibility with the links is immediate) then $\mu$ is the \textit{unique} invariant measure for $P_{\infty}(t)$.

\section{Hua-Pickrell measures}\label{sectionhuapickrellmeasures}
In this brief section we define the Hua-Pickrell measures, depending on a complex parameter $s$. We will assume throughout that $\Re(s)>-\frac{1}{2}$. This restriction is necessary in order for the measures to be finite and in particular, we assume that all of them are normalized to have mass 1. We will follow throughout the notation conventions of \cite{BorodinOlshanskiErgodic}. We consider the following measures on $\mathbb{U}(N)$ given by,
\begin{align*}
const \times \det \left((I+U)^{\bar{s}}\right)\det \left((I+U^{-1})^{s}\right) \times dU,
\end{align*}
where $dU$ denotes Haar measure on $\mathbb{U}(N)$. We note that, for $s=0$, this is just the Circular Unitary Ensemble ($CUE$). The projection of this measure on the eigenvalues $(u_1,\cdots,u_N)$ or equivalently the eigenangles, with $u_j=e^{i\theta_j}$ is given by,
\begin{align*}
const \times \prod_{1\le j< k \le N}^{}|u_j-u_k|^2\prod_{j=1}^{N}(1+u_j)^{\bar{s}}(1+\bar{u_j})^s \times d\theta_j.
\end{align*}
Under the inverse Cayley transform $\mathfrak{C}^{-1}$ the corresponding measure on $H(N)$ denoted by $\mathsf{M}_{HP}^{s,N}$ becomes,
\begin{align}\label{MatrixHuaPickrellMeasure}
\mathsf{M}_{HP}^{s,N}(dX)=const \times \det\left((I+iX)^{-s-N}\right)\det \left((I-iX)^{-\bar{s}-N}\right) \times dX,
\end{align}
where $dX$ denotes Lebesgue measure on $H(N)$. Looking at the radial part of $\mathsf{M}_{HP}^{s,N}(dX)$ we get a probability measure on $W^N$ which we will denote by $\mu^{s,N}_{HP}$ and will be referring to as a Hua-Pickrell measure and which is given by,
\begin{align*}
\mu^{s,N}_{HP}(dx)&= const \times \Delta^2_N(x)\prod_{j=1}^{N}(1+ix_j)^{-s-N}(1-ix_j)^{-\bar{s}-N}dx_j\\
&=const \times \Delta^2_N(x)\prod_{j=1}^{N}(1+x^2_j)^{-\Re(s)-N}e^{2\Im(s)Arg(1+ix_j)}dx_j.
\end{align*}
A remarkable property of these measures is that they are coherent with the respect to the links, see for example Proposition 3.1 of  \cite{BorodinOlshanskiErgodic} for a direct proof, 
\begin{align*}
\mu^{s,N+1}_{HP}\Lambda_N^{N+1}=\mu^{s,N}_{HP}.
\end{align*}
This statement will also be derived as Corollary \ref{corconsistency} as a consequence of our intertwining relations between Markov semigroups. We finally denote by $\mu_{HP}^s$ the corresponding measure on $\Omega$. It can be easily seen, that the measures $\mu_{HP}^{s,N}, \ \forall N\ge1$ give rise to determinantal point processes. By an approximation procedure, $\mu_{HP}^s$ does so as well, and this was the main objective of the study of \cite{BorodinOlshanskiErgodic}.
\begin{rmk}
In fact, the situation is a bit more subtle, $\mu_{HP}^s$ gives rise to a determinantal point process in $\mathbb{R}^*$ where $\mathbb{R}^*=\mathbb{R}\backslash \{0\}$ under the so called forgetting map that disregards $\gamma_1(\omega)$ and $\gamma_2(\omega)$ and $\alpha_i^+(\omega),\alpha_j^-(\omega)$ that are zero namely,
\begin{align*}
\omega=\left(\{\alpha^+_i(\omega)\},\{\alpha^+_i(\omega)\},\gamma_1(\omega),\gamma_2(\omega)\right)\mapsto\left(-\alpha_1^-(\omega),-\alpha_2^-(\omega),\cdots, \alpha_2^+(\omega),\alpha_1^+(\omega)\right)\in Conf(\mathbb{R}^*).
\end{align*}
However, in a recent breakthrough Qiu in \cite{Qiu}, has proven that for $s \in \mathbb{R}$ (this covers both the finite and infinite cases) the measure $\mu_{HP}^s$ only charges the subset $\Omega_0\subset \Omega$, defined as the set of all $\omega \in \Omega$ that satisfy:
\begin{align*}
\alpha_i^+(\omega)\ne 0, \alpha_j^-(\omega) \ne 0, \gamma_2(\omega)=0 \textnormal{ and } \gamma_1(\omega)=\lim_{n\to \infty} \left(\sum_{l \in \mathbb{Z}^*}^{}x_{l}(\omega) \textbf{1}_{|x_l(\omega)|>\frac{1}{n^2}}\right)
\end{align*}
where,
\begin{align*}
x_l(\omega)=\begin{cases}
\alpha_l^+(\omega) \textnormal{ if } l >0\\
-\alpha_l^-(\omega) \textnormal{ if } l <0\\
\end{cases}.
\end{align*}
\end{rmk}
\begin{rmk}
For $s=0$, under the forgetting map above and the transform $x \mapsto y=-\frac{1}{\pi x}$ the measure $\mu_{HP}^0$ gives rise to the sine point process, abbreviated $\mathsf{Sine_2}$ here, that is the determinantal point process on $\mathbb{R}$ with correlation kernel given by,
\begin{align*}
K_{\mathsf{Sine_2}}(x,y)=\frac{\sin\left(\pi\left(y-x\right)\right)}{\pi\left(y-x\right)},
\end{align*}
(see Theorem I of \cite{BorodinOlshanskiErgodic}). In particular, the dynamics obtained in Corollary \ref{mainresult} below, under this transform will leave the $\mathsf{Sine_2}$ process invariant.
\end{rmk}

\section{Hua-Pickrell Diffusions}\label{sectiondiffusions}
Before proceeding to define our stochastic dynamics, we remark in passing that, all our dynamical results are valid for any $s\in \mathbb{C}$ and not just for $\Re(s)>-\frac{1}{2}$. So, we begin by considering the one dimensional diffusions that will constitute our basic building blocks. These are strong Markov processes, with continuous sample paths in $\mathbb{R}$, with both $-\infty$ and $\infty$ as natural boundaries and generators given by,
\begin{align*}
L^{(n)}_s=(x^2+1)\frac{d^2}{dx^2}+\left[\left(2-2n-2\Re(s)\right)x+2\Im(s)\right]\frac{d}{dx},
\end{align*}
with invariant/speed measure with density with respect to Lebesgue measure given by,
\begin{align*}
m_s^{(n)}(x)=(1+x^2)^{-\Re(s)-n}e^{2\Im(s)Arg(1+ix)},
\end{align*}
and (the non-exploding) $SDE$ description,
\begin{align*}
dX(t)=\sqrt{2(X^2(t)+1)}dW(t)+\left[\left(2-2n-2\Re(s)\right)X(t)+2\Im(s)\right]dt.
\end{align*}
We will denote by $p_t^{(n),s}(x,y)$ its transition density in $\mathbb{R}$ with respect to Lebesgue measure. 

\iffalse

\begin{rmk}
The spectral expansion of the transition density $p_t^{(n),s}(x,y)$ (for special values of parameters) was first written down explicitly by Wong as example E pages 269-70 in \cite{Wong}. It consists of both a discrete (with a finite number of eigenvalues) and continuous spectrum part, the eigenfunctions corresponding to the discrete part being orthogonal polynomials. We do not reproduce it here since its exact form is not essential for what follows.
\end{rmk}

\fi

Moving on, we note that $\Delta_n(x)$ is a positive eigenfunction of $n$ copies of $L_s^{(n)}$-diffusions with eigenvalue denoted by $\lambda_{n,s}$. More precisely,
\begin{lem}
We have $\sum_{i=1}^{n}L_{s,x_i}^{(n)}\Delta_n(x)=\lambda_{n,s}\Delta_n(x)$ where $\lambda_{n,s}=\frac{n(n-1)(-2n+1-3\Re(s))}{3}$.
\end{lem}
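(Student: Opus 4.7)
The strategy is to split the operator acting on each coordinate as $L_{s,x_i}^{(n)} = A_i + B_i$, where $A_i = (x_i^2+1)\partial_{x_i}^2$ and $B_i = \bigl[(2-2n-2\Re(s))x_i + 2\Im(s)\bigr]\partial_{x_i}$, and compute $\sum_i A_i\Delta_n$ and $\sum_i B_i\Delta_n$ separately using the standard identities for the Vandermonde.

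For the first-order piece I would exploit two structural properties of $\Delta_n$. Translation invariance $\Delta_n(x_1+t,\ldots,x_n+t)=\Delta_n(x)$ yields, upon differentiating at $t=0$, the identity $\sum_i \partial_{x_i}\Delta_n = 0$, which kills the constant $2\Im(s)$ contribution entirely; homogeneity of $\Delta_n$ of degree $\binom{n}{2}$ combined with Euler's identity gives $\sum_i x_i\partial_{x_i}\Delta_n = \tfrac{n(n-1)}{2}\Delta_n$. Assembling,
\[
\sum_i B_i\Delta_n = (1-n-\Re(s))\,n(n-1)\,\Delta_n.
\]

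For the second-order piece, I would use the logarithmic derivative formulas
\[
\frac{\partial_{x_k}\Delta_n}{\Delta_n} = \sum_{j\ne k}\frac{1}{x_k-x_j}, \qquad \frac{\partial_{x_k}^2\Delta_n}{\Delta_n} = \Bigl(\sum_{j\ne k}\frac{1}{x_k-x_j}\Bigr)^2 - \sum_{j\ne k}\frac{1}{(x_k-x_j)^2}.
\]
Expanding the square and observing that the diagonal $j=l$ piece cancels the subtracted $(x_k-x_j)^{-2}$ sum reduces $\sum_k (x_k^2+1)\partial_{x_k}^2\Delta_n/\Delta_n$ to
\[
\sum_k\sum_{\substack{j,l\ne k\\ j\ne l}}\frac{x_k^2+1}{(x_k-x_j)(x_k-x_l)}.
\]
Grouping by the unordered triple $\{k,j,l\}$ and invoking the Lagrange-interpolation identity
\[
\sum_{\mathrm{cyc}}\frac{f(x_a)}{(x_a-x_b)(x_a-x_c)} = \begin{cases}0 & \text{if }\deg f \le 1,\\ 1 & \text{if }f(x)=x^2,\end{cases}
\]
one sees that the ``$+1$'' inside $x_k^2+1$ contributes nothing, while the $x_k^2$ part contributes $1$ per unordered triple $\{a,b,c\}$. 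A factor of $2$ arising from the $(j,l)\leftrightarrow(l,j)$ symmetry then yields
\[
\sum_i A_i\Delta_n = 2\binom{n}{3}\Delta_n = \frac{n(n-1)(n-2)}{3}\Delta_n.
\]

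Adding the two contributions and simplifying gives
\[
\sum_i L_{s,x_i}^{(n)}\Delta_n = \frac{n(n-1)}{3}\bigl[(n-2) + 3(1-n-\Re(s))\bigr]\Delta_n = \frac{n(n-1)(1-2n-3\Re(s))}{3}\Delta_n,
\]
which is exactly $\lambda_{n,s}\Delta_n$. The only nontrivial ingredient is the triple Lagrange identity, but this is an elementary consequence of the fact that the partial-fraction decomposition of $f(x)/\prod_{k=1}^{3}(x-x_k)$ has no polynomial part when $\deg f\le 2$, with residues summing to the leading coefficient of $f$; alternatively, one can verify the three cases $f(x)=1, x, x^2$ by clearing denominators over the common denominator $(x_a-x_b)(x_a-x_c)(x_b-x_c)$.
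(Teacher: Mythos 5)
Your computation is correct, but it takes a genuinely different route from the paper's proof. The paper (following a referee's suggestion) gives a soft argument: $\sum_i L_{s,x_i}^{(n)}$ is a symmetric operator that does not raise polynomial degree, so applied to the antisymmetric polynomial $\Delta_n$ it produces an antisymmetric polynomial of degree $\le \binom{n}{2}$, which must therefore be a scalar multiple of $\Delta_n$; the constant $\lambda_{n,s}$ is then read off as the coefficient of the leading monomial $x_n^{n-1}x_{n-1}^{n-2}\cdots x_2$. You instead compute everything head-on: translation invariance kills the $\Im(s)$ term, Euler's identity handles the dilation term, and the second-order piece collapses to $2\binom{n}{3}$ via the logarithmic Hessian of $\Delta_n$ together with the Lagrange partial-fraction identity $\sum_{\mathrm{cyc}} f(x_a)/\bigl((x_a-x_b)(x_a-x_c)\bigr)=[x^2]f$. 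Each step checks out and the final constant agrees. The paper's route is shorter and avoids the partial-fraction lemma, but it postpones the real work to the leading-coefficient extraction (which is a small combinatorial computation the paper leaves implicit); your route is more explicit, makes the individual contributions of the drift and diffusion parts transparent, and would extend verbatim to any one-dimensional generator with polynomial coefficients of degree at most $2$ in the diffusion term and at most $1$ in the drift.
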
 
\begin{proof}
I present here an elegant argument suggested by the referee. First, observe that the operator $\sum_{i=1}^{n}L_{s,x_i}^{(n)}$ is symmetric and when applied to a polynomial does not raise the degree. Thus, $\sum_{i=1}^{n}L_{s,x_i}^{(n)}\Delta_n(x)$ is antisymmetric, divisible  by $\Delta_n(x)$ and of the same degree and so actually a multiple of $\Delta_n(x)$. Finally, the coefficient of $x_n^{n-1}x^{n-2}_{n-1}\cdots x_2$ after the application of $\sum_{i=1}^{n}L_{s,x_i}^{(n)}$ gives $\lambda_{n,s}$.
The lemma can also be obtained by iteration of the intertwining relations of the next section.
\end{proof}

As in the introduction, we denote by $P^{s,N}_{HP}(t)$ the Karlin-McGregor semigroup of $N$ $L_s^{(N)}$-diffusions $h$-transformed by $\Delta_N(x)$, namely the semigroup having kernel with $(t,x,y)$ in $(0,\infty) \times \mathring{W}^N \times W^N$ given by,
\begin{align*}
e^{-\lambda_{N,s}t}\frac{\Delta_N(y)}{\Delta_N(x)}\det\left(p^{(N),s}_t(x_i,y_j)\right)^N_{i,j=1}dy.
\end{align*}
The Markov process associated to it, is equivalently given by the unique strong solution, as we see in Lemma \ref{strongsolutionandsemigroup} below, of the system of $SDEs$,
\begin{align}\label{HuaPickrellDiffusion}
dX_i(t)=\sqrt{2(X_i^2(t)+1)}dW_i(t)+\left[\left(2-2N-2\Re(s)\right)X_i(t)+2\Im(s)+\sum_{j\ne i}^{}\frac{2(X^2_i(t)+1)}{X_i(t)-X_j(t)}\right]dt,
\end{align}
where the $\{W_i\}_{i=1}^{N}$ are independent standard Brownian motions. 

\begin{lem}\label{strongsolutionandsemigroup}
The system of $SDEs$ (\ref{HuaPickrellDiffusion}) has a unique strong solution. Moreover, its transition semigroup is given by $P^{s,N}_{HP}(t)$.
\end{lem}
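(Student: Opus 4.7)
The plan is to recognize the system (\ref{HuaPickrellDiffusion}) as the Doob $h$-transform, with respect to $\Delta_N$, of $N$ independent copies of the one-dimensional $L_s^{(N)}$-diffusion, and then use this structural fact both to prove well-posedness and to identify the semigroup. For a one-dimensional diffusion with generator $a(x)\partial_x^2+b(x)\partial_x$, the $h$-transform by a positive function $h$ produces the generator $a\partial_x^2+(b+2a\,h'/h)\partial_x$. Applied coordinatewise to $N$ independent copies of $L_s^{(N)}$ with $a_i(x)=x_i^2+1$, $b_i(x)=(2-2N-2\Re(s))x_i+2\Im(s)$, and to $h(x)=\Delta_N(x)$, one computes that the added drift in the $i$-th coordinate equals $2(x_i^2+1)\,\partial_{x_i}\log\Delta_N(x)=\sum_{j\neq i}2(x_i^2+1)/(x_i-x_j)$, which matches (\ref{HuaPickrellDiffusion}). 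The preceding lemma, $\sum_i L_{s,x_i}^{(N)}\Delta_N=\lambda_{N,s}\Delta_N$ with $\Delta_N>0$ on $\mathring{W}^N$, guarantees that the Doob construction is legitimate.

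For well-posedness I would first note that on $\mathring{W}^N$ all coefficients of (\ref{HuaPickrellDiffusion}) are smooth, so standard local SDE theory yields a pathwise unique strong solution up to the stopping time $\tau=\inf\{t>0:X(t)\notin\mathring{W}^N\}$, i.e. the first time of either a collision or an escape to infinity. The claim is that $\tau=\infty$ almost surely. Non-collision will follow from the classical McKean argument: an It\^o expansion of $\log\Delta_N(X(t))$ under (\ref{HuaPickrellDiffusion}) produces, thanks precisely to the $h$-transform structure and the eigenfunction relation, a drift term bounded below together with a martingale part with controlled quadratic variation, so $\log\Delta_N(X(t))$ cannot reach $-\infty$ in finite time. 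Non-explosion will be handled separately via a Lyapunov function such as $V(x)=\sum_i\log(1+x_i^2)$: a direct computation with the generator of (\ref{HuaPickrellDiffusion}) shows $LV$ is bounded above on $\mathring{W}^N$, which by the standard Feller-type non-explosion test rules out escape to infinity in finite time.

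Once strong existence and uniqueness in $\mathring{W}^N$ for all times $t\ge 0$ are established, the identification of the transition semigroup with $P_{HP}^{s,N}(t)$ is essentially a packaging of the Karlin-McGregor formula and the Doob transform. The Karlin-McGregor theorem gives that the semigroup of $N$ independent $L_s^{(N)}$-diffusions killed at the first collision has density $\det(p_t^{(N),s}(x_i,y_j))_{i,j=1}^N$ on $\mathring{W}^N\times\mathring{W}^N$. The $h$-transform by $\Delta_N$, together with the eigenvalue relation giving the multiplicative factor $e^{-\lambda_{N,s}t}$, then produces the kernel $e^{-\lambda_{N,s}t}(\Delta_N(y)/\Delta_N(x))\det(p_t^{(N),s}(x_i,y_j))\,dy$ displayed just before the lemma, which by construction is the transition semigroup of the unique strong solution of (\ref{HuaPickrellDiffusion}).

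The main obstacle will be the non-collision and non-explosion arguments. Non-collision is morally the Dyson Brownian motion argument, but the coefficients here are not translation invariant and the diffusion coefficient grows linearly at infinity, so one must verify that the McKean-type It\^o bound for $\log\Delta_N(X(t))$ survives the presence of the extra drift $(2-2N-2\Re(s))x_i+2\Im(s)$; the cancellation at collisions is genuinely provided by the eigenfunction identity of the preceding lemma. Non-explosion requires an independent test because the diffusion coefficient is of order $|x|$, but under $\Re(s)>-\tfrac{1}{2}$ the one-dimensional $L_s^{(N)}$-diffusions have $\pm\infty$ as natural boundaries and the repulsive interaction only makes the multi-dimensional process less prone to explode than its independent counterpart, so a standard Lyapunov argument suffices.
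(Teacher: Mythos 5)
Your overall structural insight --- recognizing (\ref{HuaPickrellDiffusion}) as the Doob $h$-transform by $\Delta_N$ of $N$ independent $L_s^{(N)}$-diffusions, and deducing the Karlin--McGregor form of the semigroup --- is correct and is exactly the paper's second step. However, for well-posedness you take a genuinely different route, and that route has a gap.

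The paper does not run a McKean/Lyapunov argument. It instead applies a ready-made theorem of Graczyk and Ma\l ecki (Theorem 2.2 of \cite{Graczyk}) on systems of SDEs with singular repulsive interaction of the form $H(x_i,x_j)/(x_i-x_j)$, and verifies its hypotheses (C1), (C2), (A1)--(A5) directly for $\sigma(x)=\sqrt{2(1+x^2)}$, $b(x)=2\Im(s)-2\Re(s)x$ and $H(x,y)=2(1+xy)$, after rewriting the linear part of the drift so that the interaction function is precisely $H$. That theorem delivers in one stroke strong existence, pathwise uniqueness, no collisions and no explosion, \emph{and, crucially, for arbitrary initial conditions in $W^N$, including degenerate ones where particles coincide}. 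This last point is essential for the downstream Feller property in Lemma \ref{FellerSemigroup}, which requires the process to be well defined when started from $\partial W^N$. Your proposal only establishes well-posedness from points of $\mathring{W}^N$: local existence from a smooth-coefficient argument on the interior, non-collision from a McKean bound on $\log\Delta_N(X(t))$ (which presupposes $\log\Delta_N(X(0))>-\infty$), and non-explosion from a Lyapunov function --- none of these mechanisms covers the instant diffraction off the diagonals. So as written, your proof is incomplete for the lemma as it is used in the paper.

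Two secondary issues. First, you restrict the non-explosion argument to $\Re(s)>-\tfrac{1}{2}$, but the paper emphasizes that \emph{all} dynamical statements in Section \ref{sectiondiffusions} hold for arbitrary $s\in\mathbb{C}$: the natural boundary property of the one-dimensional $L_s^{(N)}$-diffusion does not depend on $\Re(s)>-\tfrac12$, and indeed nothing in the Graczyk--Ma\l ecki verification requires it. Your Lyapunov computation with $V(x)=\sum_i\log(1+x_i^2)$ in fact gives $\mathsf{L}_s^{(N)}V$ bounded above for all $s\in\mathbb{C}$ (the interaction contribution collapses to the constant $2N(N-1)$), so the restriction is unnecessary even within your own approach. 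Second, the McKean step is left at the level of a plausibility claim; the position-dependent diffusion coefficient $2(1+x_i^2)$ and the extra linear drift do require the cancellation to be checked explicitly, which you acknowledge but do not do. If you want to avoid the Graczyk--Ma\l ecki machinery, the cleanest fix is to carry out the It\^o computation for $\log\Delta_N$ in full, verify boundedness-below of the drift on compact time intervals, and then supply a separate approximation argument (e.g.\ start from $x^\epsilon\in\mathring{W}^N$, use the explicit transition kernel and the derivative estimates of Lemma \ref{FellerSemigroup}, and pass to the limit $x^\epsilon\to x\in\partial W^N$) to handle degenerate starting points --- but this becomes roughly as much work as simply citing \cite{Graczyk}.
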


\begin{proof}
We first prove that the system of $SDEs$ (\ref{HuaPickrellDiffusion}) has a unique strong solution with no collisions and no explosions, even if started from a "degenerate" point (when some of the coordinates coincide i.e. there is instant "diffraction" of particles). This follows by applying Theorem 2.2 of \cite{Graczyk} whose conditions we will now proceed to check. In order to apply the aforementioned theorem, one first needs to note that we can write,
\begin{align*}
\sum_{j\ne i}^{}\left(\frac{2(1+x_ix_j)}{x_i-x_j}\right)+(2N-2)x_i=\sum_{j\ne i}^{}\left(\frac{2(1+x^2_i)}{x_i-x_j}\right),
\end{align*}
and thus, one can identify the function $H:\mathbb{R}\times \mathbb{R} \to \mathbb{R}$ in Theorem 2.2 of \cite{Graczyk} as follows,
\begin{align*}
H(x,y)=2(1+xy).
\end{align*}
We now list the conditions of Theorem 2.2 of \cite{Graczyk} in the special case when the functions $\sigma_i\equiv \sigma, b_i\equiv b, H_{ij}\equiv H$ therein do not depend on $i$ and $j$ (note that condition (A5) there is vacuous since $b_i\equiv b$).
\begin{itemize}
\item[(C1)] $|\sigma(x)-\sigma(y)|^2\le \rho \left(|x-y|\right)$ for a function $\rho:\mathbb{R}_+\to \mathbb{R}_+$ such that $\int_{0^+}^{}\rho^{-1}(x)dx=\infty$ and the function $b$ is Lipschitz.
\item[(C2)] There exists a constant $c>0$ such that for all $x,y$:
\begin{align*}
\sigma^2(x)+b(x)x&\le c(1+|x|^2),\\
H(x,y)&\le c(1+|xy|).
\end{align*}
\item[(A1)]For $w<x<y<z$:
\begin{align*}
H(w,z)(y-z)\le H(x,y)(z-w).
\end{align*}
\item[(A2)] There exists a constant $c\ge 0$ such that for all $x,y$:
\begin{align*}
\sigma^2(x)+\sigma^2(y)\le c(x-y)^2+4H(x,y).
\end{align*}
\item[(A3)]  There exists a constant $c\ge 0$ such that for all $x<y<z$:
\begin{align*}
H(x,y)(y-x)+H(y,x)(z-y)\le c(z-y)(z-x)(y-x)+H(x,z)(z-x).
\end{align*}
\item[(A4)] For all $x$:
\begin{align*}
\sigma^2(x)+H(x,x)>0
\end{align*}
or, otherwise for every $y_1,\cdots, y_{N-2}$:
\begin{align*}
b(x)+\sum_{j}^{}\frac{H(x,y_j)}{x-y_j}\textbf{1}(y_j\in \mathbb{R}\backslash \{x\})\neq 0.
\end{align*}
\end{itemize}
 The conditions listed above on the functions:
 \begin{align*}
\sigma(x)=\sqrt{2(1+x^2)}, b(x)=2\Im(s)-2\Re(s)x, H(x,y)=2(1+xy)
 \end{align*}
 can then be checked as follows. 
 
 First of all, condition (C1) holds with $\rho(x)=2x^2$ ($b$ is also obviously Lipschitz). Condition (C2) also clearly holds, by completing the square and after some manipulations we see that any choice of constant $c\ge 2-\Re(s)+|s|$ will do. 
 
Now, condition (A1) requires that for $w<x<y<z$,
\begin{align*}
\frac{1+wz}{z-w}\le \frac{1+xy}{y-x}.
\end{align*}
To see this, define for fixed $w<x<y$ the LHS to be $f(z)=\frac{1+wz}{z-w}$. Since,
\begin{align*}
\frac{d}{dz}f(z)=-\frac{1+w^2}{(z-w)^2},
\end{align*}
and for $z=y$ the inequality $\frac{1+wy}{y-w}\le \frac{1+xy}{y-x}$ is equivalent to $(x-w)(1+y^2)\ge 0$, the statement is immediately seen to be true. 

Moving on to (A2), any choice of a constant $c \ge 3$ will do, since,
\begin{align*}
x^2+y^2+2xy+4 \ge 0.
\end{align*}

For condition (A3), after dividing by 2, we need to find a constant $c \ge 0$ such for $x<y<z$,
\begin{align*}
(1+xy)(y-x)+(1+yz)(z-y)\le c(z-y)(z-x)(y-x)+(1+xz)(z-x).
\end{align*}
Defining $g_c(y)$, for fixed $x<z$ by,
\begin{align*}
g_c(y)= c(z-y)(z-x)(y-x)+(1+xz)(z-x)-(1+xy)(y-x)-(1+yz)(z-y),
\end{align*}
we see that this is a quadratic function in $y$ with zeros at $y=x$ and $y=z$ and leading coefficient $(z-x)(1-c)$. Thus, for $c>1$ we see that $g_c(y)\ge 0$ in $[x,z]$ and the statement follows.

 Condition (A4) obviously holds, since $\sigma^2(x)+H(x,x)>0 \ \forall x \in \mathbb{R}$. 

Hence, the law of $\left((X_1(t),\cdots, X_N(t));t\ge0\right)$ from (\ref{HuaPickrellDiffusion}) is the unique solution to the well-posed martingale problem with generator acting on $C^2_c\left(W^N(\mathbb{R})\right)$ (twice continuously differentiable functions with compact support in $W^N(\mathbb{R})$) given by,
\begin{align*}
\mathsf{L}_s^{(N)}=\Delta^{-1}_N(x)\circ \left(\sum_{i=1}^{N}L^{(N)}_{s,x_i}\right)\circ\Delta_N(x)-\lambda_{N,s}.
\end{align*}
We can now easily observe that, this is exactly given by a Doob's $h$-transform of $N$ one dimensional diffusions; with transition kernel having density with respect to Lebesgue measure in $(0,\infty) \times \mathring{W}^N \times W^N$ given by,
\begin{align*}
e^{-\lambda_{N,s}t}\frac{\Delta_N(y)}{\Delta_N(x)}\det\left(p^{(N),s}_t(x_i,y_j)\right)^N_{i,j=1},
\end{align*}
where $p^{(N),s}_t(x,y)$ is the \textit{Feller} transition density of the one dimensional diffusion process with generator $L_s^{(N)}$ with two natural boundaries.
\end{proof}

We now give a direct and rather technical proof that the semigroups are Feller. A much neater argument is given in Section \ref*{sectionmatrixprocess}, however one needs to introduce a quite non-trivial matrix valued stochastic process, having (\ref{HuaPickrellDiffusion}) as its eigenvalue evolution. The matrix process is in some sense better behaved from an $SDE$ point of view, so we can appeal to existing results in the literature.

\begin{lem}\label{FellerSemigroup}
The semigroups $P^{s,N}_{HP}(t)$ are Feller in the sense that $\forall f \in C_0(W^N)$ we have,
\begin{align*}
& P^{s,N}_{HP}(t)f\in C_0(W^N), \ \ \forall t>0 \ ,\\
& \lim_{t\to 0}P^{s,N}_{HP}(t)f=f.
\end{align*}
\end{lem}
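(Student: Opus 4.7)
Let me fix $f\in C_0(W^N)$. The Feller statement breaks into three ingredients: continuity of $x\mapsto(P^{s,N}_{HP}(t)f)(x)$ on $W^N$ for each $t>0$, its vanishing as $\|x\|\to\infty$ within $W^N$, and strong continuity $P^{s,N}_{HP}(t)f\to f$ in $C_0(W^N)$ as $t\downarrow 0$. The first two I would derive from the explicit Karlin--McGregor kernel combined with the Feller property of the one-dimensional diffusion generated by $L_s^{(N)}$ on $\mathbb{R}$, while the third would come from the SDE description of Lemma \ref{strongsolutionandsemigroup}.

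\textbf{Continuity on $W^N$.} On the interior $\mathring{W}^N$ the explicit formula
\begin{equation*}
(P^{s,N}_{HP}(t)f)(x)=e^{-\lambda_{N,s}t}\int_{W^N}\frac{\Delta_N(y)}{\Delta_N(x)}\det\!\left(p^{(N),s}_t(x_i,y_j)\right)^N_{i,j=1}f(y)\,dy
\end{equation*}
is continuous in $x$ by dominated convergence, using joint smoothness of $p^{(N),s}_t$ for $t>0$ and the fact that $\Delta_N(x)$ is bounded below on compact subsets of $\mathring{W}^N$. At boundary points of $W^N$ where some $x_i$'s coincide, the ratio $\det(p^{(N),s}_t(x_i,y_j))/\Delta_N(x)$ extends smoothly to all of $W^N$ via the confluent Vandermonde (Hermite-interpolation) identity, obtained by replacing coincident $x_i$'s with successive $x$-derivatives of $p^{(N),s}_t(\cdot,y)$. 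Uniform control in $y$ on compact $y$-sets then allows the limit to be taken inside the integral.

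\textbf{Vanishing at infinity.} As $\|x\|\to\infty$ in $W^N$ the interlacing order forces $x_1\to-\infty$ or $x_N\to+\infty$. After the antisymmetrization
\begin{equation*}
(P^{s,N}_{HP}(t)f)(x)=\frac{e^{-\lambda_{N,s}t}}{N!\,\Delta_N(x)}\int_{\mathbb{R}^N}\Delta_N(y)\tilde{f}(y)\det\!\left(p^{(N),s}_t(x_i,y_j)\right)dy
\end{equation*}
with $\tilde{f}$ the symmetric extension of $f$, I would expand the Karlin--McGregor determinant into its permutation sum. Each resulting term factorizes across coordinates into one-dimensional integrals $\int p^{(N),s}_t(x_{\sigma(i)},y_i)g_i(y_i)\,dy_i$, which tend to $0$ as $x_{\sigma(i)}\to\pm\infty$ by the Feller property of the one-dimensional diffusion. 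The singular factor $\Delta_N(x)^{-1}$ is absorbed against the polynomial pieces of $\Delta_N(y)$ by grouping columns according to which $x_i$'s escape; the argument has to be carried out for all possible escape patterns of the $x_i$'s.

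\textbf{Strong continuity and main obstacle.} By Lemma \ref{strongsolutionandsemigroup}, $P^{s,N}_{HP}(t)$ is the transition semigroup of the continuous, non-colliding, non-exploding strong solution $X^{x}(\cdot)$ of \eqref{HuaPickrellDiffusion}. Path continuity together with bounded convergence give pointwise convergence $(P^{s,N}_{HP}(t)f)(x)\to f(x)$ as $t\downarrow 0$. To upgrade to uniform convergence I would fix $\varepsilon>0$, pick a compact $K\subset W^N$ with $|f|<\varepsilon$ off $K$, and use continuity of the SDE flow in the initial data (Gronwall) on a larger compact $L\supset K$, combined with a Lyapunov estimate based on the at-most-quadratic growth of the coefficients in \eqref{HuaPickrellDiffusion} to bound $\sup_{t\in(0,1]}\mathbb{P}^{x}(X^{x}(t)\in K)$ for $x\notin L$. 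The main obstacle is the vanishing at infinity: tracking the cancellation between $\Delta_N(x)^{-1}$ and the Karlin--McGregor determinant across all escape patterns of the $x_i$'s --- in particular mixed patterns where some coordinates remain bounded while others escape --- requires the combinatorics of the Feller-integrated permutation expansion to be handled with some care.
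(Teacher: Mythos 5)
The central difficulty you correctly identify --- cancellation between $\Delta_N(x)^{-1}$ and the Karlin--McGregor determinant when some coordinates coalesce or escape together --- is not resolved by your proposed route, and that is a genuine gap. Two problems with the permutation-expansion step: first, $\Delta_N(y)\tilde f(y)$ is not a product $\prod_i g_i(y_i)$ for a general $f\in C_0$, so the ``one-dimensional factorization'' you invoke does not actually hold; second, and more fundamentally, each individual permutation term $\prod_i p^{(N),s}_t(x_{\sigma(i)},y_i)$ carries the full $\Delta_N(x)^{-1}$ singularity, which is cancelled only by the alternating signs across the whole sum, so you cannot estimate terms separately and then divide by $\Delta_N(x)$. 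Similarly, in your continuity argument the ``confluent Vandermonde'' (L'H\^{o}pital) extension is heuristically right but requires uniform, integrable-in-$y$ bounds on the $x$-derivatives $\partial_x^{(i)}p^{(N),s}_t(x,y)$ to justify exchanging the coalescing limit with the $W^N$-integral, and your proposal does not indicate how to obtain those bounds (the coefficients of $L_s^{(N)}$ are unbounded, so generic Gaussian heat-kernel estimates do not apply directly).

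The paper's proof supplies both missing ingredients. It changes variables via $y=\operatorname{arsinh}(x)$, turning the $L_s^{(N)}$-diffusion into one with smooth bounded coefficients and uniform ellipticity, so Stroock's parabolic estimates (Theorem 3.3.11 of \cite{Stroock}) give Gaussian-type bounds on $q_t$ and all its backward derivatives; Fa\`a di Bruno then transports these to $\partial_x^{(i)}p^{(N),s}_t(x,y)$. With those bounds in hand, the cancellation against $\Delta_N(x)^{-1}$ is exhibited not by a permutation expansion but by iterated applications of the mean value theorem to $F_t(x_1,\dots,x_N)=\det(p^{(N),s}_t(x_i,y_j))$, grouping the $x$'s into blocks whose gaps stay bounded below: this produces divided-difference factors $\prod(\xi^l_{i+1}-\xi^l_i)$ whose ratio with the corresponding piece of $\Delta_N(x)$ is shown to be $\le 1$ uniformly, while the surviving derivative of $F_t$ is controlled by the Stroock/Fa\`a di Bruno bounds. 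Your observations about path continuity for strong continuity at $t=0$ are fine (and indeed, once the $t>0$ Feller property is established, pointwise convergence at $t\to 0$ upgrades to uniform convergence by a standard semigroup argument, so the Lyapunov step is unnecessary), but without the derivative bounds and the MVT device the two $x$-properties cannot be completed along the lines you sketch.
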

\begin{proof}
For each $(x_1\le \cdots \le x_N) \in W^N$ the continuity of $t\mapsto \mathbb{E}_{(x_1,\cdots,x_N)}\left[f\left(X_1(t),\cdots,X_N(t)\right)\right]$ with $f\in C_0$ follows from the fact that $\left(X_1(t),\cdots,X_N(t);t \ge 0\right)$ is the unique strong solution of the system of $SDEs$ even if started from the diagonals. More specifically, this follows by the almost sure continuity in $t$ of $(X_1(t),\cdots X_N(t);t\ge 0)$ (see statement of Theorem 5.1 of \cite{Graczyk}).

For the fact that $P^{s,N}_{HP}(t)f \in C_0$ if $f\in C_0$, first pick $R$ such that $|f(x_1, \cdots , x_N)| \le \epsilon$ for $(x_1\le \cdots \le x_N)\notin [-R,R]^N$ and let us write for $(x_1, \cdots , x_N) \in W^N$,
\begin{align*}
|\mathbb{E}_{(x_1,\cdots,x_N)}\left[f\left(X_1(t),\cdots,X_N(t)\right)\right]| &\le \mathbb{E}_{(x_1,\cdots,x_N)}\left[|f\left(X_1(t),\cdots,X_N(t)\right)|\mathbf{1}\left(X(t)\in [-R,R]^N\right)\right]+\\
&\mathbb{E}_{(x_1,\cdots,x_N)}\left[|f\left(X_1(t),\cdots,X_N(t)\right)|\mathbf{1}\left(X(t)\notin [-R,R]^N\right)\right]\\
\le \|f\|_{\infty}\mathbb{P}_{(x_1,\cdots,x_N)}\left(X(t)\in [-R,R]^N\right)&+\epsilon \mathbb{P}_{(x_1,\cdots,x_N)}\left(X(t)\notin [-R,R]^N\right),
\end{align*}
and also for $(x_1, \cdots , x_N),(y_1, \cdots , y_N) \in W^N$ with $P^{s,N}_{HP}(t)\left((x_1,\cdots,x_N),dz\right)$ being the law of $(X_1(t),\cdots,X_N(t))$ if $(X_1(0),\cdots,X_N(0))=(x_1,\cdots,x_N)$,
\begin{align*}
&|\mathbb{E}_{(x_1,\cdots,x_N)}\left[f\left(X_1(t),\cdots,X_N(t)\right)\right]-\mathbb{E}_{(y_1,\cdots,y_N)}\left[f\left(X_1(t),\cdots,X_N(t)\right)\right]|\\
&\le\|f\|_{\infty}\int_{W^N\cap[-R,R]^N}^{}\left|P^{s,N}_{HP}(t)\left((x_1,\cdots,x_N),dz\right)-P^{s,N}_{HP}(t)\left((y_1,\cdots,y_N),dz\right)\right|+2\epsilon.
\end{align*}
Both assertions (vanishing at infinity and continuity) will follow immediately by the use of the dominated convergence theorem and the estimates on the transition density and its derivatives in the backwards variables $\partial_x^{(i)}p^{(N),s}_t(x,y)$ (for $i \ge 0$) to be presented shortly.

To be more concrete and in order to ease notation, let us first consider the most singular case $x_1=\cdots=x_N=x$, the arguments for the others are analogous and will be explained at the end of this proof. First, note that the law of $(X_1(t),\cdots,X_N(t))$ started from $(x,\cdots,x)$ is governed by (this being well-posed is justified by the estimates presented below),
\begin{align*}
\underset{x_1,\cdots,x_N \to x \mathbf{1}}{\lim}e^{-\lambda_N t}\frac{\det\left(y_i^{j-1}\right)^N_{i,j=1}}{\det\left(x_i^{j-1}\right)^N_{i,j=1}}\det \left(p^{(N),s}_t(x_i,y_j)\right)^N_{i,j=1}dy=e^{-\lambda_N t}\Delta_N(y) \det \left(\partial^{(i-1)}_xp^{(N),s}_t(x,y_j)\right)_{i,j=1}^N dy.
\end{align*}

Hence, by expanding the determinant, we have the bound,
\begin{align*}
e^{-\lambda_Nt} \int_{W^N\cap [-R,R]^N}^{}\Delta_N(y)\det \left(\partial^{(i-1)}_xp^{(N),s}_t(x,y_j)\right)_{i,j=1}^N dy \lesssim C(N,t,R) \prod_{i=1}^{N}\int_{-R}^{R}|\partial^{(i-1)}_xp^{(N),s}_t(x,z)|dz.
\end{align*}

From now on, to ease notation further, we write $p_t(x,y)$ for the transition density with respect to Lebesgue measure of the SDE in $\mathbb{R}$,
\begin{align*}
dX(t)=\sqrt{2(X^2(t)+1)}dW(t)+(\beta X(t)+\gamma)dt,
\end{align*}
where $\beta$ and $\gamma$ are arbitrary (real) constants. We make the following smooth change of variables (in order to obtain bounded coefficients),
\begin{align*}
Y(t)=arsinh(X(t))=\log\left(X(t)+\sqrt{1+X^2(t)}\right).
\end{align*}
Hence, with $y=f(x)=arsinh(x)$ we have $f'(x)=\frac{1}{\sqrt{1+x^2}}$ and $f''(x)=-\frac{x}{(1+x^2)^{\frac{3}{2}}}$ and by applying Ito's formula we obtain,
\begin{align*}
dY(t)=\sqrt{2}dW(t)+\left[(\beta-1)tanh(Y(t))+\gamma sech(Y(t))\right]dt.
\end{align*}
or equivalently $Y(t)$ is a diffusion in $\mathbb{R}$ with generator $A$ given by,
\begin{align*}
A=\frac{d^2}{dx^2}+\left[(\beta-1)tanh(x)+\gamma sech(x)\right] \frac{d}{dx}.
\end{align*}
Now, note that the coefficients are smooth with all their derivatives bounded and (obviously) the diffusion coefficient is uniformly elliptic. Thus, if we let $q_t(z,w)$ denote the transition density of $(Y(t);t \ge 0)$, from Theorem 3.3.11 of \cite{Stroock}, we have for $i\ge 0$ with some constant $C_i$ (depending on the ellipticity constant and the derivatives of the coefficients) the following bound,
\begin{align*}
|\partial_z^{(i)}q_t(z,w)| \le \frac{C_i}{1 \wedge t^{\frac{i+1}{2}}}\exp \left(-\left(C_it- \frac{(z-w)^2}{C_i t}\right)^-\right).
\end{align*}
By applying the change of variables, the original kernel $p_t(x,y)$ for $X(t)$ is given by,
\begin{align*}
p_t(x,y)=q_t(f(x),f(y)) \partial_yf(y)\ \ \textnormal{ where } \ \ f(x)=arsinh(x).
\end{align*}
Now, making use of Faa-Di Bruno's formula, we obtain,
\begin{align*}
\partial_x^{(i)}p_t(x,y)= \sum_{}^{}\frac{i!}{k_1! \cdots k_i!}\partial^{(k)}_{f(x)}q_t(f(x),f(y))\prod_{j=1}^{i}\left(\frac{\partial_x^{(j)}f(x)}{j!}\right)^{k_j}\partial_yf(y),
\end{align*}
where $k=k_1+\cdots +k_i$ and the sum is over $k_1,\cdots,k_i$ such that $k_1+2k_2+\cdots+ik_i=i$. 

This is a finite sum and applying the triangle inequality, we will arrive at some sufficient bound but we can in fact get the leading order terms for each of the exponentials. Observe that for $j \ge 1$,
\begin{align*}
|\partial_x^{(j)}f(x)| \le \frac{c_j}{\left(1+x^2\right)^{\frac{j}{2}}}+o\left(\frac{1}{\left(1+x^2\right)^{\frac{j}{2}}}\right).
\end{align*}
Hence, making use of the fact $k_1+2k_2+\cdots+ik_i=i$ we get,
\begin{align*}
|\partial_x^{(i)}p_t(x,y)| \lesssim \left(\frac{1}{\sqrt{1+x^2}}\right)^{i}\left(\frac{1}{\sqrt{1+y^2}}\right)\sum_{j=0}^{i}c(j,i,t)\exp \left(-\left(C_jt- \frac{(arsinh(x)-arsinh(y))^2}{C_j t}\right)^-\right) + \mathsf{l}.\mathsf{o}.\mathsf{t},
\end{align*}
where $\mathsf{l}.\mathsf{o}.\mathsf{t}$ stands for lower order terms. By the continuity of $ x \mapsto \partial_x^{(i)}p_t(x,y)$, the estimate above and the dominated convergence theorem the Feller property follows.

We will now treat the more general case when some of the points $(x_1^{(n)},\cdots, x_N^{(n)})$, not necessarily all, can come together as they go to $\infty$ with $n \to \infty$. First, we write:
\begin{align*}
P^{s,N}_{HP}(t)(x_1,\cdots,x_N;y_1,\cdots,y_N)=e^{-\lambda_{N,s}t}\frac{\Delta_N(y)}{\Delta_N(x)}F_t(x_1,\cdots,x_N;y_1,\cdots,y_N),
\end{align*}
where,
\begin{align*}
F_t(x_1,\cdots,x_N;y_1,\cdots,y_N)=\det\left(p^{(N),s}_t(x_i,y_j)\right)^N_{i,j=1}.
\end{align*}
We can then split $(x_1^{(n)},\cdots, x_N^{(n)})$ into $m$ blocks $(x^{(n)}_{i_1+\cdots+i_{j-1}+1}, \cdots, x^{(n)}_{i_1+\cdots+i_j})$, with $i_1+\cdots+i_m=N$ and $i_0=0$ such that $|x^{(n)}_{i_1+\cdots+i_{j}}-x^{(n)}_{i_1+\cdots+i_{j}+1}|\ge \textnormal{Const}$ for $j=1,\cdots,m$ uniformly in $n$. 

From now on we will suppress the dependence of $F$ on $t,y_1,\cdots,y_N$ and write $F(x_1,\cdots,x_N)$. Note that, $(x_1^{(n)},\cdots, x_N^{(n)})\to \infty$ if and only if at least one of $x^{(n)}_N \to \infty$ or $x^{(n)}_1\to -\infty$ happens and without loss of generality we assume that $x^{(n)}_1 \to -\infty$. The problematic singular terms coming from the Vandermonde determinant $\Delta_N(x)$ are of course:
\begin{align*}
\frac{1}{\prod_{i_1+\cdots+i_{j-1}+1\le l_1 < l_2 \le i_1+\cdots+i_{j}}^{}(x^{(n)}_{l_2}-x^{(n)}_{l_1})}
\end{align*}
which blow up as $n \to \infty$. The crux is that these singularities are cancelled out by vanishing terms coming from $F_t(x_1,\cdots,x_N;y_1,\cdots,y_N)$.

We begin by applying the mean-value theorem (MVT) to the first block and we will suppress dependence on $n$ from now on. To ease notation write $F(x_1,\cdots,x_N)=\tilde{F}(x_1,\cdots,x_k)$ where $k=i_1$ and we write $\partial_l$ for the derivative with respect to the $l^{th}$ variable. Then, since $\tilde{F}\left(x_1,\cdots,x_{k-1},x_{k-1}\right)=0$, we have for some $\xi^{1}_{k}$ such that $x_{k-1}< \xi^{1}_{k}<x_{k}$:
\begin{align*}
\tilde{F}(x_1,\cdots,x_{k-1},x_k)=(x_k-x_{k-1})\partial_k\tilde{F}\left(x_1,\cdots,x_{k-1},\xi_k^1\right).
\end{align*}
Now write $\xi_i^0=x_i$. Applying the MVT $(k-2)$ more times we obtain that for some $(\xi_2^1,\cdots,\xi^1_k)$ satisfying $\xi_1^0 < \xi_2^1 < \xi_2^0 < \cdots < \xi_k^1 < \xi_k^0$:
\begin{align*}
\tilde{F}(x_1,\cdots,x_{k-1},x_k)=\prod_{i=1}^{k-1}\left(\xi^{0}_{i+1}-\xi_i^0\right)\partial_2\cdots \partial_k \tilde{F}(\xi_1^0,\xi_2^1,\cdots,\xi^1_{k-1},\xi^{1}_k).
\end{align*}
Iterating this procedure we finally get:
\begin{align*}
\tilde{F}(x_1,\cdots,x_{k-1},x_k)=\prod_{l=0}^{k-2}\prod_{i=l+1}^{k-1}(\xi^l_{i+1}-\xi_i^l)\partial_2\partial_3^{2}\cdots \partial^{k-2}_{k-1}\partial^{k-1}_k \tilde{F}(\xi_1^0,\xi_2^1,\cdots,\xi^{k-2}_{k-1},\xi^{k-1}_k),
\end{align*}
for some $\xi_i^l$, $l=0,\cdots, k-2$, $i=l+1,\cdots,k$ such that:
\begin{align*}
\xi_{l+1}^l<\xi_{l+2}^{l+1}<\xi_{l+2}^l<\cdots <\xi_k^{l+1}<\xi_k^l.
\end{align*}
By the interlacing constraints above we observe that, for all $l=0,\cdots,k-2$ and $i=l+1,\cdots,k-1$:
\begin{align*}
\xi^l_{i+1}-\xi_{i}^l\le x_{i+1}-x_{i-l}=\xi^0_{i+1}-\xi_{i-l}^0.
\end{align*}
Thus, the following ratio is bounded:
\begin{align*}
\frac{\prod_{l=0}^{k-2}\prod_{i=l+1}^{k-1}(\xi^l_{i+1}-\xi_i^l)}{\prod_{1 \le i < j \le k}^{}(x_{j}-x_{i})} \le 1.
\end{align*}
In particular it will be uniformly bounded in $n$ when the $x$'s depend on $n$. Now we can obviously apply the argument above to each single block $(x^{(n)}_{i_1+\cdots+i_{j-1}+1}, \cdots, x^{(n)}_{i_1+\cdots+i_j})$ for $j=1, \cdots, m$. Then the result follows by the uniform bounds on the transition kernel and its derivatives $\partial_x^{(j)}p_t(x,y)$; in particular we need bounds for the first $\underset{j=1,\cdots,m}{\sup}i_j-1$ derivatives.\

\end{proof}

We now arrive at the following proposition, which makes explicit the relation between the Hua-Pickrell measures and the Hua-Pickrell diffusions.
\begin{prop}\label{invariant}
Let $\Re(s)>-\frac{1}{2}$. Then the probability measure $\mu^{s,N}_{HP}$ is the unique invariant measure of $P^{s,N}_{HP}(t)$.
\end{prop}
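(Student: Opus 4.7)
The plan is to establish invariance via a reversibility (detailed balance) computation inherited from the Doob $h$-transform structure of $P_{HP}^{s,N}(t)$, and uniqueness via irreducibility plus the strong Feller property.

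For invariance: the one-dimensional generator $L_s^{(N)}$ is symmetric in $L^2(m_s^{(N)}(x)dx)$, so its Feller transition density satisfies detailed balance $p_t^{(N),s}(x,y)\,m_s^{(N)}(x) = p_t^{(N),s}(y,x)\,m_s^{(N)}(y)$. A row/column factorization in the Karlin--McGregor determinant (factor $m_s^{(N)}(x_i)$ out of row $i$, apply detailed balance entrywise, factor $m_s^{(N)}(y_j)$ out of column $j$, and use $\det M = \det M^T$) gives the lifted identity
\begin{align*}
\det\left(p_t^{(N),s}(x_i, y_j)\right)_{i,j=1}^N \prod_{i=1}^N m_s^{(N)}(x_i) = \det\left(p_t^{(N),s}(y_i, x_j)\right)_{i,j=1}^N \prod_{i=1}^N m_s^{(N)}(y_i),
\end{align*}
so the Karlin--McGregor semigroup on $W^N$ is reversible with respect to $\prod_i m_s^{(N)}(x_i)\,dx_i$. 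Doob-transforming by the positive eigenfunction $\Delta_N$ with eigenvalue $\lambda_{N,s}$ then yields reversibility of $P_{HP}^{s,N}(t)$ with respect to $\Delta_N^2(x) \prod_i m_s^{(N)}(x_i)\,dx_i$, which is exactly $\mu_{HP}^{s,N}$ up to normalization. A coordinatewise check at $|x_k|\to\infty$ (where the density behaves like $|x_k|^{2(N-1)-2\Re(s)-2N} = |x_k|^{-2\Re(s)-2}$) shows this measure is finite precisely when $\Re(s)>-\tfrac{1}{2}$, so after normalization $\mu_{HP}^{s,N}$ is a reversible, hence invariant, probability measure.

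For uniqueness I would invoke the standard criterion that a Feller--Markov process which is strong Feller and irreducible on an open subset of its state space admits at most one invariant probability measure supported there. The strong Feller property is already contained in the smooth transition density estimates used in the proof of Lemma \ref{FellerSemigroup}. Irreducibility on $\mathring{W}^N$ reduces to strict positivity of the Karlin--McGregor determinant on the open Weyl chamber, a classical consequence of the strict positivity $p_t^{(N),s}(x,y)>0$ of the underlying uniformly elliptic one-dimensional density on $\mathbb{R}$. To upgrade this to uniqueness among \emph{all} invariant probability measures, I would argue that any such measure must lie in the measure class of $\mu_{HP}^{s,N}$: the $h$-transformed process almost surely avoids the diagonal so no mass can accumulate on $\partial W^N$, and by strong Feller smoothing at any $t>0$ every invariant measure is forced to be absolutely continuous on $\mathring{W}^N$; combined with reversibility and irreducibility this pins down its density to be a constant multiple of $\Delta_N^2 \prod_i m_s^{(N)}(x_i)\,dx_i$.

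The main obstacle I anticipate is precisely this last step --- transferring uniqueness from ``within the measure class of $\mu_{HP}^{s,N}$'' to uniqueness among all invariant probability measures. The key ingredient is that the $h$-transform conditions the underlying multi-particle dynamics to avoid the diagonal, so once an invariant probability measure is known to be absolutely continuous and irreducibly transported throughout $\mathring{W}^N$, reversibility fixes its density uniquely up to normalization, completing the proof.
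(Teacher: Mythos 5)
Your invariance argument is essentially the paper's: both use reversibility of the one-dimensional density $p_t^{(N),s}$ with respect to $m_s^{(N)}$ and the eigenfunction property of $\Delta_N$. The paper directly computes the action of $P_{HP}^{s,N}(t)$ on $\mu_{HP}^{s,N}$, whereas you phrase the same facts as reversibility of the lifted Karlin--McGregor semigroup; this is a slightly stronger (and cleaner) formulation, but the content is the same, and your integrability check at $\Re(s)>-\tfrac12$ is correct.

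For uniqueness you take a genuinely different route. You invoke the abstract Khas'minskii/Doob criterion (strong Feller plus topological irreducibility on $\mathring W^N$ yields at most one invariant probability measure, once you have excluded mass on $\partial W^N$), and you correctly note that the density estimates behind the Feller property upgrade to strong Feller, and that strict positivity of the Karlin--McGregor determinant gives irreducibility. The paper avoids this machinery and argues directly through the ergodic decomposition: two distinct invariant probability measures would yield two mutually singular ergodic ones, neither of which charges $\partial W^N$ (since $\tau=\infty$ a.s.); because the transition kernel has a density that is strictly positive on $(0,\infty)\times\mathring W^N\times\mathring W^N$, any invariant measure assigns positive mass to every Lebesgue-positive subset of $\mathring W^N$, which is incompatible with mutual singularity. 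Both arguments are valid; the paper's is more elementary and self-contained, yours leans on a standard ergodic-theoretic theorem but explains \emph{why} the invariant density is $\Delta_N^2\prod m_s^{(N)}$ rather than merely showing no other measure can exist. As for the obstacle you flag at the end: it is not a real gap --- once an invariant measure is known not to charge $\partial W^N$ and the semigroup is strong Feller and irreducible on $\mathring W^N$, the criterion already gives uniqueness among \emph{all} invariant probability measures, so the absolute-continuity and reversibility argument you sketch is a correct (if slightly redundant) way to finish.
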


\begin{proof}
By making use of the reversibility of $p^{(N),s}_t(x,y)$ with respect to $m_s^{(N)}(x)$ and the fact that $\Delta_N$ is an eigenfunction of the sub-Markov Karlin-McGregor semigroup with eigenvalue $e^{\lambda_{N,s}t}$, we can obtain the invariance of $\mu^{s,N}_{HP}$ by $P^{s,N}_{HP}(t)$ as follows (here $const$ denotes the same normalization constant in all equalities),
\begin{align*}
&\int_{\mathring{W}^N}^{}e^{-\lambda_{N,s}t}\frac{\Delta_N(y)}{\Delta_N(x)}\det\left(p^{(N),s}_t(x_i,y_j)\right)^N_{i,j=1}\times const \times \Delta^2_N(x)\prod_{j=1}^{N}(1+x^2_j)^{-\Re(s)-N}e^{2\Im(s)Arg(1+ix_j)}dx=\\
&=const \times \Delta_N(y)\prod_{j=1}^{N}(1+y^2_j)^{-\Re(s)-N}e^{2\Im(s)Arg(1+iy_j)}e^{-\lambda_{N,s}t}\int_{W^N}^{}\det\left(p^{(N),s}_t(y_i,x_j)\right)^N_{i,j=1}\Delta_N(x)dx\\
&=const \times \Delta_N(y)\prod_{j=1}^{N}(1+y^2_j)^{-\Re(s)-N}e^{2\Im(s)Arg(1+iy_j)}e^{-\lambda_{N,s}t}e^{\lambda_{N,s}t}\Delta_N(y).
\end{align*}
Now, by the regularity of the transition kernel $e^{-\lambda_{N,s}t}\frac{\Delta_N(y)}{\Delta_N(x)}\det\left(p^{(N),s}_t(x_i,y_j)\right)^N_{i,j=1}$ we show that actually $\mu^{s,N}_{HP}$ is the \textit{unique} invariant probability measure of $P^{s,N}_{HP}(t)$. Namely, suppose we had at least two different invariant probability measures, then we would have at least two distinct \textit{ergodic} ones which have to be mutually singular (see Lemma 2.10 and Theorem 2.11 of \cite{Eberle}). Now, since $\tau=\inf \{t>0: \exists \ \ 1 \le i < j \le N  \ \textnormal{such that} \ X_i(t)=X_j(t) \}=\infty$ almost surely (the system of $SDEs$ (\ref{HuaPickrellDiffusion}) has no collisions or equivalently never hits a diagonal) then any invariant measure $\mu$ of $P_{HP}^{s,N}(t)$ does not charge $\partial W^N$. Hence, if $\mu_1, \mu_2$ are two (distinct) ergodic measures there exists some Borel set $A_1$ so that $A_1 \not \subset \partial W^N$ and,
\begin{align}\label{mutuallysingular}
\mu_1\left(A_1\right)=1 \ \textnormal{and} \ \mu_2\left(A_1\right)=0.
\end{align}
Moreover, note that $A_1$ must have positive Lebesgue measure denoted $Leb(A_1)>0$ for otherwise by the invariance of $\mu_1$ we would have (since $P_{HP}^{s,N}(t)$ has a density $P_{HP}^{s,N}(t)(x,y)$ with respect to Lebesgue),
\begin{align*}
\mu_1\left(A_1\right)=\int_{W^N}^{}\mu_1(dx) \int_{A_1}^{}P_{HP}^{s,N}(t)(x,y)dy=0.
\end{align*}
But on the other hand, since we have the following fundamental \textit{strict total positivity} fact,
\begin{align*}
e^{-\lambda_{N,s}t}\frac{\Delta_N(y)}{\Delta_N(x)}\det\left(p^{(N),s}_t(x_i,y_j)\right)^N_{i,j=1}>0 \ ,\ \forall \left(t,x,y\right)\in (0,\infty) \times \mathring{W}^N \times \mathring{W}^N,
\end{align*}
which is exactly (a particular case of) the statement of Theorem 4 of \cite{KarlinMcGregorCoincidence} or see also Problem 6 and its solution on pages 158-159 of \cite{ItoMckean}, we obtain that for any Borel set $\mathcal{A}$ such that $\mathcal{A} \not \subset \partial W^N$ and $Leb(\mathcal{A})>0$,
\begin{align*}
f^{(N)}_{\mathcal{A},t}(x)=\int_{\mathcal{A}}^{}e^{-\lambda_{N,s}t}\frac{\Delta_N(y)}{\Delta_N(x)}\det\left(p^{(N),s}_t(x_i,y_j)\right)^N_{i,j=1}dy>0 \ ,\ \forall x \in \mathring{W}^N.
\end{align*}
Thus, by the invariance of $\mu_i$ for $i=1,2$ and the fact that they do not charge $\partial W^N$, we get,
\begin{align*}
\mu_i\left(\mathcal{A}\right)=\int_{W^N}^{}\mu_i(dx) \int_{\mathcal{A}}^{}e^{-\lambda_{N,s}t}\frac{\Delta_N(y)}{\Delta_N(x)}\det\left(p^{(N),s}_t(x_i,y_j)\right)^N_{i,j=1}dy=\int_{W^N}^{}\mu_i(dx)f^{(N)}_{\mathcal{A},t}(x)>0,
\end{align*}
which contradicts (\ref{mutuallysingular}) and thus we obtain uniqueness.
\end{proof}

\section{Intertwinings and Boundary Feller process}\label{sectionintertwinings}

In this section, we prove the main result of this paper, proven as Corollary \ref{mainresult} below. In order to proceed, we first need to recall one of the main results of \cite{InterlacingDiffusions} that we require here (we give a self-contained proof in Section \ref{appendix}). As we will see in the proof of Theorem \ref*{intertwiningtheorem} below, the additional contribution of this paper, other than the quite non-trivial technical work of proving that all Markov kernels and semigroups are Feller; is a rather simple observation regarding one-dimensional diffusion generators, which is actually what made it clear to the author that the method of intertwiners could be applied in this setting.

We begin by defining the \textit{dual} Hua-Pickrell diffusion in $\mathbb{R}$, with infinitesimal generator denoted by $\widehat{L^{(n)}_s}$ given by,
\begin{align*}
\widehat{L^{(n)}_s}&=(x^2+1)\frac{d^2}{dx^2}+\left[2x-\left(2-2n-2\Re(s)\right)x-2\Im(s)\right]\frac{d}{dx}\\
&=(x^2+1)\frac{d^2}{dx^2}+\left[\left(2n+2\Re(s)\right)x-2\Im(s)\right]\frac{d}{dx},
\end{align*}
 and where, both $-\infty$ and $+\infty$ are natural boundary points. The corresponding (non-exploding) $SDE$ is given by,
\begin{align*}
dX(t)=\sqrt{2(X^2(t)+1)}dW(t)+\left[\left(2n+2\Re(s)\right)X(t)-2\Im(s)\right]dt
\end{align*}
and the speed measure $\hat{m}_s^{(n)}$ with density with respect to Lebesgue measure given by,
\begin{align*}
\hat{m}_s^{(n)}(x)=(1+x^2)^{\Re(s)+n-1}e^{-2\Im(s)Arg(1+ix)}.
\end{align*}
Propositions 2.15 and 2.16, more precisely display (26) of \cite{InterlacingDiffusions} give the intertwining relation $\forall t > 0, N \ge 1$,
\begin{align}\label{intermediateintertwining}
\mathcal{P}^{(N+1)}_{s}(t)\Lambda_{N,N+1}=\Lambda_{N,N+1}\hat{\mathcal{P}}^{(N)}_{s}(t),
\end{align}
where $\mathcal{P}^{(N+1)}_{s}(t)$ is the sub-Markov Karlin-McGregor semigroup associated to $N+1$ $L_s^{(N+1)}$-diffusions killed when they intersect or equivalently the semigroup with kernel in $W^{N+1}$ given by,
\begin{align*}
\det\left(p^{(N+1),s}_t(x_i,y_j)\right)^{N+1}_{i,j=1}dy.
\end{align*}
Similarly, $\hat{\mathcal{P}}^{(N)}_{s}(t)$ is the sub-Markov Karlin-McGregor semigroup associated to $N$ $\widehat{L_s^{(N+1)}}$-diffusions having kernel (where we denote by $\hat{p}^{(N+1),s}_t$ the transition kernel of a single $\widehat{L_s^{(N+1)}}$-diffusion process),
\begin{align*}
\det\left(\hat{p}^{(N+1),s}_t(x_i,y_j)\right)^{N}_{i,j=1}dy,
\end{align*}
and $\Lambda_{N,N+1}$ is the, not yet normalized, positive kernel,
\begin{align*}
\Lambda_{N,N+1}(x,dy)=\prod_{i=1}^{N}\hat{m}_s^{(N+1)}(y_i)\boldmath{1}(y\in W^{N,N+1}(x))dy.
\end{align*}
\iffalse
\begin{rmk} Let us say a word on the proof of (\ref{intermediateintertwining}). It relies on the following relation between the transition densities of one dimensional diffusions in Siegmund duality:
\begin{align*}
p_t(x,y)=\partial_y\int_{x}^{r}\hat{p}_t(y,dz).
\end{align*}
One can then prove (\ref{intermediateintertwining}) by direct calculation. However as shown in \cite{InterlacingDiffusions}, relation (\ref{intermediateintertwining}) follows immediately from the very structure of a certain block matrix determinant transition kernel; the reader is referred to Section 2 of \cite{InterlacingDiffusions} for more details and motivation behind the introduction of this block determinant kernel.

\end{rmk}
\fi

For completeness, we will give a self-contained proof of (\ref{intermediateintertwining}) in Section \ref{appendix}. We are now ready to state and prove the key theorem behind the construction:

\begin{thm} \label{intertwiningtheorem} Let $N \ge 1$ and $f \in C_0\left(W^N\right)$ then $\forall t \ge 0$,
\begin{align}\label{intertwining}
P^{s,N+1}_{HP}(t)\Lambda^{N+1}_Nf=\Lambda^{N+1}_NP^{s,N}_{HP}(t)f.
\end{align}
\end{thm}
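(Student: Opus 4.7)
The plan is to deduce the claimed semigroup intertwining from the already available intertwining (\ref{intermediateintertwining}) between the unconjugated Karlin--McGregor semigroups $\mathcal{P}^{(N+1)}_s(t)$ and $\hat{\mathcal{P}}^{(N)}_s(t)$, by exploiting a one-dimensional algebraic identity between the generators $L_s^{(N)}$ and $\widehat{L_s^{(N+1)}}$. This identity is presumably the ``rather simple observation'' the paper alludes to just before the statement.

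The first step is to verify that $\widehat{L_s^{(N+1)}}$ is precisely the Doob $h$-transform of $L_s^{(N)}$ by $\hat{m}_s^{(N+1)}$, and moreover that the zero-order term produced by this conjugation is the constant $2(\Re(s)+N)$. Since $(\log \hat{m}_s^{(N+1)})'(x) = [2(\Re(s)+N)x - 2\Im(s)]/(1+x^2)$, adding $2(x^2+1)(\log \hat{m}_s^{(N+1)})'$ to the drift of $L_s^{(N)}$ yields exactly the drift of $\widehat{L_s^{(N+1)}}$, and a direct computation of $L_s^{(N)}\hat{m}_s^{(N+1)}/\hat{m}_s^{(N+1)}$ gives the constant $2(\Re(s)+N)$. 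At the level of one-dimensional transition densities this becomes
\begin{align*}
\hat{p}^{(N+1),s}_t(x,y) = \frac{\hat{m}_s^{(N+1)}(y)}{\hat{m}_s^{(N+1)}(x)}\, e^{-2(\Re(s)+N)t}\, p^{(N),s}_t(x,y),
\end{align*}
which pulls out of the determinant defining $\hat{\mathcal{P}}^{(N)}_s(t)$ as a product of row- and column-weights times $e^{-2N(\Re(s)+N)t}\det(p^{(N),s}_t(u_i,y_j))$.

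The second step is the bridge between the two families of links: for bounded $f$ on $W^N$, defining $g(y) = \Delta_N(y)f(y)/\prod_{i=1}^N \hat{m}_s^{(N+1)}(y_i)$ gives the pointwise identity
\begin{align*}
[\Lambda^{N+1}_N f](z) = \frac{N!}{\Delta_{N+1}(z)}\,[\Lambda_{N,N+1}\, g](z).
\end{align*}
Substituting into $P^{s,N+1}_{HP}(t)\Lambda^{N+1}_N f$, the factor $\Delta_{N+1}(z)$ in the $h$-transform of $\mathcal{P}^{(N+1)}_s(t)$ cancels, leaving $(N!\,e^{-\lambda_{N+1,s}t}/\Delta_{N+1}(x))\,[\mathcal{P}^{(N+1)}_s(t)\Lambda_{N,N+1}g](x)$. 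Apply (\ref{intermediateintertwining}) to swap $\mathcal{P}^{(N+1)}_s(t)\Lambda_{N,N+1}$ for $\Lambda_{N,N+1}\hat{\mathcal{P}}^{(N)}_s(t)$, then use the density relation above to reduce $\hat{\mathcal{P}}^{(N)}_s(t)$ to $\mathcal{P}^{(N)}_s(t)$. The weights $\prod_i \hat{m}_s^{(N+1)}(u_i)$ supplied by $\Lambda_{N,N+1}$ cancel exactly with the reciprocal weights produced by the density identity, and the weights $\prod_j \hat{m}_s^{(N+1)}(y_j)$ cancel against the denominator of $g$, leaving the Vandermonde $\Delta_N(y)f(y)$. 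What remains is $[\Lambda^{N+1}_N P^{s,N}_{HP}(t)f](x)$ up to an overall exponential factor, and comparing exponents reduces the theorem to the numerical identity $\lambda_{N,s}-\lambda_{N+1,s} = 2N(\Re(s)+N)$, which is a one-line check using $\lambda_{n,s}=n(n-1)(-2n+1-3\Re(s))/3$.

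The main obstacle is technical rather than conceptual: the auxiliary test function $g$ is not compactly supported and may grow polynomially even for $f\in C_c(\mathring{W}^N)$, so one must justify the Fubini interchanges and the applicability of (\ref{intermediateintertwining}) to $g$. The natural remedy is to first establish (\ref{intertwining}) for $f\in C_c(\mathring{W}^N)$ using the bounds on $\partial_x^{(j)} p^{(N),s}_t(x,y)$ developed in the proof of Lemma \ref{FellerSemigroup} (which ensure absolute convergence of every integral encountered), and then to extend to all $f\in C_0(W^N)$ by density, using that both sides of (\ref{intertwining}) map $C_0(W^N)$ into $C_0(W^{N+1})$ by Lemmas \ref{Feller1} and \ref{FellerSemigroup}.
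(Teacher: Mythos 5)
Your argument is correct and is essentially the paper's own proof: the key observation that $\widehat{L_s^{(N+1)}}$ and $L_s^{(N)}$ are related by a Doob $h$-transform via $\hat{m}_s^{(N+1)}$ with eigenvalue $-c_{N,s}=2(\Re(s)+N)$, the bridge $[\Lambda_N^{N+1}f](z)=\frac{N!}{\Delta_{N+1}(z)}[\Lambda_{N,N+1}(h_{N,s}f)](z)$, and the identity $\lambda_{N+1,s}=\lambda_{N,s}+Nc_{N,s}$ are exactly the ingredients the paper uses, just phrased as a direct substitution and cancellation rather than as an $h$-transform of the whole intertwining~(\ref{intermediateintertwining}). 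Your attention to the integrability of the auxiliary test function $g$ and the extension by density/Feller continuity matches the paper's closing remark invoking Lemmas \ref{Feller1} and \ref{FellerSemigroup}.
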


\begin{proof}
The proof hinges on the following simple observation regarding one-dimensional diffusion operators: namely an easy calculation gives that the function $\left(\hat{m}_s^{(N+1)}\right)^{-1}(x)$ is a positive eigenfunction of $\widehat{L^{(N+1)}_s}$ with eigenvalue $c_{N,s}=-2N-2\Re(s)$ and the $h$-transform of $\widehat{L^{(N+1)}_s}$ by this eigenfunction is the $L^{(N)}_s$-diffusion. In symbols, at the infinitesimal level:
\begin{align*}
\hat{m}_s^{(N+1)}(x)\circ\widehat{L^{(N+1)}_s}\circ\left(\hat{m}_s^{(N+1)}\right)^{-1}(x)-c_{N,s}=L_s^{(N)}
\end{align*}
and at the level of transition densities, for $t>0$:
\begin{align*}
e^{-c_{N,s} t}\hat{p}_t^{(N+1),s}(x,y)\frac{\left(\hat{m}_s^{(N+1)}\right)^{-1}(y)}{\left(\hat{m}_s^{(N+1)}\right)^{-1}(x)}=p_t^{(N),s}(x,y).
\end{align*}
For $y \in W^N$ consider $h_{N,s}(y)=\prod_{i=1}^{N}\left(\hat{m}_s^{(N+1)}\right)^{-1}(y_i)\Delta_N(y)$ and observe that:
\begin{align*}
\left(\Lambda_{N,N+1}h_{N,s}\right)(x)=\frac{1}{N!}\Delta_{N+1}(x)
\end{align*}
and so for $x\in \mathring{W}^{N+1}$:
\begin{align*}
\left[\frac{1}{\left(\Lambda_{N,N+1}h_{N,s}\right)(x)}\Lambda_{N,N+1}\circ h_{N,s}(y)\right](x,dy)=\Lambda_{N}^{N+1}(x,dy).
\end{align*}
Moreover, note that $\lambda_{N+1,s}=\lambda_{N,s}+Nc_{N,s}$. 

Thus, performing an $h$-transform of the right hand side of ($\ref{intermediateintertwining}$) by $e^{-\left(\lambda_{N,s}+Nc_{N,s}\right)t}h_{N,s}(y)$, which in probabilistic terms corresponds to transforming the $N$ $\widehat{L^{(N+1)}_s}$-diffusions into $N$ $L^{(N)}_s$-diffusions and conditioning those by the Vandermonde determinant $\Delta_N(y)$ (and analogously for the left hand side), we obtain the following equality of Markov kernels for $t>0$ and $x \in \mathring{W}^{N+1}$,
\begin{align*}
(P^{s,N+1}_{HP}(t)\Lambda^{N+1}_N)(x,dy)=(\Lambda^{N+1}_NP^{s,N}_{HP}(t))(x,dy).
\end{align*}
Now, by using the Feller property of the kernels involved (Lemma \ref{Feller1} and Lemma \ref{FellerSemigroup}), we can extend this to $t \ge 0$ and $x \in W^{N+1}$ and obtain the statement of the theorem.
\end{proof}

Making use of Proposition \ref{invariant}, we immediately get the following corollary,

\begin{cor}\label{corconsistency}
Let $\Re(s)>-\frac{1}{2}$ then the Hua-Pickrell measures are consistent with the links,
\begin{align*}
\mu^{s,N+1}_{HP}\Lambda_N^{N+1}=\mu^{s,N}_{HP}.
\end{align*}
\end{cor}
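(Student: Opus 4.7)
The plan is to exploit the intertwining of Theorem \ref{intertwiningtheorem} together with the uniqueness part of Proposition \ref{invariant}. Set $\nu_N := \mu^{s,N+1}_{HP}\Lambda_N^{N+1}$, which is \emph{a priori} just some probability measure on $W^N$. The strategy is to show that $\nu_N$ is invariant for the semigroup $P^{s,N}_{HP}(t)$; then, since $\mu^{s,N}_{HP}$ is by Proposition \ref{invariant} the \emph{unique} invariant probability measure (this is where the restriction $\Re(s) > -\tfrac{1}{2}$ enters, to ensure the measure is finite and can be normalised), one concludes $\nu_N = \mu^{s,N}_{HP}$.

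To verify invariance of $\nu_N$, I would compute directly, for any test function $f \in C_0(W^N)$ and $t \ge 0$:
\begin{align*}
\langle \nu_N P^{s,N}_{HP}(t), f \rangle
&= \langle \mu^{s,N+1}_{HP}, \Lambda^{N+1}_N P^{s,N}_{HP}(t) f \rangle \\
&= \langle \mu^{s,N+1}_{HP}, P^{s,N+1}_{HP}(t) \Lambda^{N+1}_N f \rangle \\
&= \langle \mu^{s,N+1}_{HP} P^{s,N+1}_{HP}(t), \Lambda^{N+1}_N f \rangle \\
&= \langle \mu^{s,N+1}_{HP}, \Lambda^{N+1}_N f \rangle = \langle \nu_N, f \rangle,
\end{align*}
using Theorem \ref{intertwiningtheorem} in the second equality, Fubini in the third, and the invariance of $\mu^{s,N+1}_{HP}$ under $P^{s,N+1}_{HP}(t)$ (again Proposition \ref{invariant}) in the fourth.

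There is no real obstacle here beyond checking that the two sides of the computation make sense as Markov-kernel compositions; the Feller property of both the links (Lemma \ref{Feller1}) and the semigroups (Lemma \ref{FellerSemigroup}), which was already needed to extend Theorem \ref{intertwiningtheorem} to all of $W^{N+1}$, is what lets one apply the intertwining freely in the integral against the probability measure $\mu^{s,N+1}_{HP}$ and take the test function $f$ in $C_0(W^N)$. All the hard work has been done in proving Theorem \ref{intertwiningtheorem} and Proposition \ref{invariant}; the corollary is essentially a one-line consequence once those are in place.
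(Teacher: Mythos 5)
Your proof is correct and is exactly the argument the paper has in mind: the corollary appears immediately after Theorem~\ref{intertwiningtheorem} with the remark that it follows "making use of Proposition~\ref{invariant}," and the computation you spell out (intertwine, use invariance of $\mu^{s,N+1}_{HP}$, invoke uniqueness of the invariant measure for $P^{s,N}_{HP}(t)$) is the intended one-line derivation.
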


Finally, using Theorem \ref{intertwiningtheorem} above and Theorem \ref{BOFormalism} we readily get,

\begin{cor}\label{mainresult}
There exists a unique Feller semigroup $P^{s,\infty}_{HP}(t)$ on $\Omega$ that is consistent with the semigroups $\{P_N(t)\}_{N\ge 1}$, so that for $f \in C_0\left(W^N\right)$,
\begin{align*}
P^{s,\infty}_{HP}(t)\Lambda^{\infty}_Nf=\Lambda^{\infty}_NP^{s,N}_{HP}(t)f \ ,\ \forall t\ge0, \ \forall N \ge 1.
\end{align*}
Moreover, if $\Re(s)>-\frac{1}{2}$ the measure $\mu^s_{HP}$ is its unique invariant measure.
\end{cor}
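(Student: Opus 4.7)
The plan is to derive this corollary as a straightforward application of the general machinery already assembled. The first step is to verify that the hypotheses of Theorem \ref{BOFormalism} are met in our setting. By Proposition \ref{boundary}, $\Omega$ is the Feller boundary of the projective chain $\{W^N, \Lambda_N^{N+1}\}_{N \ge 1}$, so all the structural requirements on the base category are in place. The Feller property of the links $\Lambda_N^{N+1}$ was established in Lemma \ref{Feller1}, and that of the semigroups $P_{HP}^{s,N}(t)$ in Lemma \ref{FellerSemigroup}. The intertwining relation
\begin{equation*}
P^{s,N+1}_{HP}(t)\Lambda^{N+1}_N = \Lambda^{N+1}_N P^{s,N}_{HP}(t), \quad \forall t \ge 0,\, \forall N \ge 1,
\end{equation*}
required to invoke Theorem \ref{BOFormalism} is precisely the content of Theorem \ref{intertwiningtheorem}. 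Thus Theorem \ref{BOFormalism} applies directly and yields a unique Feller semigroup $P^{s,\infty}_{HP}(t)$ on $\Omega$ satisfying $P^{s,\infty}_{HP}(t)\Lambda_N^{\infty} = \Lambda_N^{\infty}P^{s,N}_{HP}(t)$ for every $N \ge 1$ and every $t \ge 0$.

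For the second assertion, I would invoke the paragraph on invariant measures immediately following Theorem \ref{BOFormalism}. Under the hypothesis $\Re(s) > -\tfrac{1}{2}$, Proposition \ref{invariant} gives that for each $N \ge 1$, the probability measure $\mu^{s,N}_{HP}$ is the unique invariant measure of $P^{s,N}_{HP}(t)$. Compatibility of the sequence $\{\mu^{s,N}_{HP}\}_{N \ge 1}$ with the links, namely
\begin{equation*}
\mu^{s,N+1}_{HP}\Lambda_N^{N+1} = \mu^{s,N}_{HP},
\end{equation*}
is then automatic from the uniqueness (since both sides are invariant for $P^{s,N}_{HP}(t)$ by the intertwining, the measure on the left being invariant because $\mu^{s,N+1}_{HP}\Lambda_N^{N+1}P^{s,N}_{HP}(t) = \mu^{s,N+1}_{HP}P^{s,N+1}_{HP}(t)\Lambda_N^{N+1} = \mu^{s,N+1}_{HP}\Lambda_N^{N+1}$); this is exactly the content of Corollary \ref{corconsistency}. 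By the invariant-measure statement of the Borodin--Olshanski formalism, the measure $\mu^s_{HP}$ on $\Omega$ determined by $\mu^s_{HP}\Lambda_N^{\infty} = \mu^{s,N}_{HP}$ is then the unique invariant measure of $P^{s,\infty}_{HP}(t)$.

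I do not expect any serious obstacle: every ingredient has been prepared in the preceding sections, and what remains is bookkeeping. The only subtle point worth mentioning explicitly is that the identification of $\mu^s_{HP}$ as the measure on $\Omega$ arising from the coherent sequence $\{\mu^{s,N}_{HP}\}_{N \ge 1}$ via the affine bijection $\Phi$ of Lemma \ref{bijection} (extended to $\mathcal{M}_p(\Omega) \cong \varprojlim \mathcal{M}_p(W^N)$ as in the proof of Proposition \ref{boundary}) agrees with the definition given in Section \ref{sectionhuapickrellmeasures}; this is immediate, but worth flagging so that the invariance relation $\mu^s_{HP}\Lambda_N^{\infty} = \mu^{s,N}_{HP}$ is not merely postulated but identified with the construction of $\mu^s_{HP}$ from Hua--Pickrell measures on $H$.
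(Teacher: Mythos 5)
Your proposal is correct and follows precisely the route the paper takes: Theorem \ref{BOFormalism} is invoked with the intertwining of Theorem \ref{intertwiningtheorem} and the Feller properties from Lemmas \ref{Feller1} and \ref{FellerSemigroup} (organized via Proposition \ref{boundary}), and the invariant-measure assertion is deduced from Proposition \ref{invariant} together with the invariant-measure remark following Theorem \ref{BOFormalism}. The paper states the corollary as an immediate consequence; your write-up just supplies the bookkeeping explicitly, so there is nothing further to add.
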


\subsection{Approximation of processes on the boundary}
For any Feller process encountered below, taking values in a locally compact metrizable separable space $\mathcal{X}$, we assume that we are always dealing with its cadlag modification in the space $\mathsf{D}\left(\mathbb{R}_+,\mathcal{X}\right)$, of right continuous functions with left limits. In order to describe the approximation procedure, we begin by recalling some of the setup. Suppose $\{M_N\}_{N\ge 1}$ is a sequence of coherent probability measures on $\{W^N\}_{N\ge 1}$,
\begin{align*}
M_{N+1}\Lambda_{N}^{N+1}=M_N \ ,\forall N \ge 1,
\end{align*}
and let $M$ denote the corresponding measure on $\Omega$. We can embed $W^N$ into $\Omega$ as follows, by defining for $x^{(N)} \in W^N$,
\begin{align*}
\alpha_i^+\left(x^{(N)}\right)&=\begin{cases}
\frac{\max\{x^{(N)}_{N+1-i},0\}}{N}  \ & i=1,\cdots, N\\
0 & i=N+1,N+2,\cdots
\end{cases},\\
\alpha_i^-\left(x^{(N)}\right)&=\begin{cases}
\frac{\max\{-x^{(N)}_{i},0\}}{N}  \ & i=1,\cdots, N\\
0 & i=N+1,N+2,\cdots
\end{cases},\\
\gamma_1\left(x^{(N)}\right)&=\sum_{i=1}^{\infty}\alpha_i^+\left(x^{(N)}\right)-\sum_{i=1}^{\infty}\alpha_i^-\left(x^{(N)}\right)=\frac{x_1^{(N)}+\cdots+x_N^{(N)}}{N},\\
\delta\left(x^{(N)}\right)&=\sum_{i=1}^{\infty}\left(\alpha_i^+\left(x^{(N)}\right)\right)^2+\sum_{i=1}^{\infty}\left(\alpha_i^-\left(x^{(N)}\right)\right)^2=\frac{\left(x_1^{(N)}\right)^2+\cdots+\left(x_N^{(N)}\right)^2}{N^2}.
\end{align*}
We will denote these embeddings by $\mathfrak{r}_N: W^N \hookrightarrow \Omega$ and hence we can view each $M_N$ as a probability measure on $\Omega$ under the pushforward $\left(\mathfrak{r}_N\right)_*M_N$. Then, from Sections 4 and 5 of \cite{BorodinOlshanskiErgodic}, see also Section 2.1 of \cite{Qiu}, $M$ is the measure on $\Omega$ corresponding to the \textit{coherent family} $\{M_N\}_{N\ge 1}$ if and only if the following convergences in distribution hold as $N \to \infty$,
\begin{align*}
\alpha_i^+\left(x^{(N)}\right)&\overset{d}{\longrightarrow} \alpha_i^+\left(\omega\right)\ , \forall i \ge 1,\\
\alpha_i^-\left(x^{(N)}\right)&\overset{d}{\longrightarrow} \alpha_i^-\left(\omega\right)\ , \forall i \ge 1,\\
\gamma_1\left(x^{(N)}\right)&\overset{d}{\longrightarrow} \gamma_1\left(\omega\right),\\
\delta\left(x^{(N)}\right)&\overset{d}{\longrightarrow} \delta\left(\omega\right),
\end{align*}
where $x^{(N)}$ is sampled according to $M_N$ and $\omega$ according to $M$. And in such a case, as before, we write,
\begin{align*}
\gamma_2\left(\omega\right)=\delta\left(\omega\right)-\sum_{i=1}^{\infty}\left(\alpha_i^+\left(\omega\right)\right)^2-\sum_{i=1}^{\infty}\left(\alpha_i^-\left(\omega\right)\right)^2.
\end{align*}

Now, consider a family of Feller semigroups $\{P_N(t);t\ge 0 \}_{N\ge 1}$ consistent with the links $\Lambda_N^{N+1}$ and let $\left(X^{(N)}(t);t\ge 0\right)$ denote a realization of the corresponding Markov processes. Moreover, let $P_{\infty}(t)$ be the semigroup on $\Omega$ obtained by the method of the intertwiners and denote a realization of this by $\left(\mathsf{X}_{\infty}(t);t\ge 0\right)$. Note that, we can of course, embed $\mathsf{D}\left(\mathbb{R}_+,W^N\right)$ into $\mathsf{D}\left(\mathbb{R}_+,\Omega\right)$, in the obvious way and by abusing notation we write $\alpha_i^+\left(X^{(N)};t\right), \alpha_i^-\left(X^{(N)};t\right), \gamma_1\left(X^{(N)};t\right), \delta\left(X^{(N)};t\right)$ for this. Moreover, we still denote these embeddings by $\mathfrak{r}_N$. We then have the following proposition.

\begin{prop}\label{propositionapproximation}
For each $N\ge 1$, let $\left(X^{(N)}(t);t \ge 0\right)$ be Feller processes in $W^N$ that are consistent with the links $\Lambda_N^{N+1}$ $\forall N \ge 1$. Denote by $\left(\mathsf{X}_{\infty}(t);t\ge 0\right)$ the Feller-Markov process on $\Omega$ obtained by the method of the intertwiners  and also let as before $\left(\bar{X}^{(N)}(t);t \ge 0\right)=\left(\mathfrak{r}_N\left(X^{(N)}\right)(t);t \ge 0\right)$. Finally, assume that  $\{\mu_N\}_{N\ge 1}$ is a consistent family of probability measures with corresponding measure $\mu$ on $\Omega$. Then, if $\forall {N}\ge 1$ $\bar{X}^{(N)}(0)\overset{d}{=}\left(\mathfrak{r}_N\right)_*\mu_N$ and $\mathsf{X}_{\infty}(0)\overset{d}{=}\mu$ we have for any fixed $t\ge 0$,
\begin{align*}
\bar{X}^{(N)}(t)\overset{d}{\longrightarrow}\mathsf{X}_{\infty}(t) \ \textnormal { as }  \ N \to \infty,
\end{align*}
or equivalently,
\begin{align*}
\mathsf{X}_{\infty}(t)\overset{d}{=}\underset{N\to \infty}{w\textendash \lim}\ \left(\mathfrak{r}_N\right)_*\left(\mu_NP_N(t)\right),
\end{align*}
where $w\textendash \lim$ denotes the weak limit of measures.
\end{prop}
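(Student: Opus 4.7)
The plan is to reduce the assertion to the characterization of the measure on $\Omega$ corresponding to a coherent family of probability measures on $\{W^N\}_{N\ge 1}$, as recalled just before the statement of the proposition (following \cite{BorodinOlshanskiErgodic} and \cite{Qiu}). The key is to observe that the time-$t$ marginals $\{\mu_N P_N(t)\}_{N\ge 1}$ themselves form a coherent family, whose associated boundary measure is precisely $\mu P_{\infty}(t)$.

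First, I would verify the coherence of $\{\mu_N P_N(t)\}_{N\ge 1}$ with respect to the links $\Lambda_N^{N+1}$. Using the hypothesis that the semigroups $P_N(t)$ are consistent with the links (that is, $P_{N+1}(t)\Lambda_N^{N+1}=\Lambda_N^{N+1}P_N(t)$) and the coherence of the initial family $\{\mu_N\}_{N\ge 1}$,
\begin{align*}
\mu_{N+1} P_{N+1}(t) \Lambda_N^{N+1} = \mu_{N+1}\Lambda_N^{N+1}P_N(t) = \mu_N P_N(t) \ , \ \forall N \ge 1.
\end{align*}

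Next, I would identify the boundary measure associated with this coherent family. Since $\mu$ corresponds to $\{\mu_N\}_{N\ge 1}$ we have $\mu\Lambda_N^{\infty}=\mu_N$ for all $N\ge 1$; combined with the defining intertwining $P_{\infty}(t)\Lambda_N^{\infty}=\Lambda_N^{\infty}P_N(t)$ of Theorem \ref{BOFormalism}, this yields
\begin{align*}
\left(\mu P_{\infty}(t)\right)\Lambda_N^{\infty}=\mu\Lambda_N^{\infty}P_N(t)=\mu_N P_N(t) \ ,\ \forall N \ge 1.
\end{align*}
By the bijection between coherent families on $\{W^N\}_{N\ge 1}$ and probability measures on $\Omega$ (Lemma \ref{bijection}), $\mu P_{\infty}(t)$ is therefore the unique measure on $\Omega$ projecting onto $\{\mu_N P_N(t)\}_{N\ge 1}$.

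Finally, I would apply the criterion recalled before the statement: for any coherent family $\{M_N\}_{N\ge 1}$ with corresponding boundary measure $M$, sampling $x^{(N)}\sim M_N$ and $\omega \sim M$ one has the joint convergence in distribution of the coordinate maps $\alpha_i^{\pm}(x^{(N)})\to \alpha_i^{\pm}(\omega)$, $\gamma_1(x^{(N)})\to \gamma_1(\omega)$ and $\delta(x^{(N)})\to \delta(\omega)$, which is equivalent to the weak convergence $(\mathfrak{r}_N)_*M_N\rightharpoonup M$ in $\Omega$. Applying this with $M_N=\mu_N P_N(t)$ and $M=\mu P_{\infty}(t)$ yields $(\mathfrak{r}_N)_*(\mu_N P_N(t))\rightharpoonup \mu P_{\infty}(t)$ as $N\to \infty$, which is exactly the claim.

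The main subtle point is that the Borodin-Olshanski criterion as stated gives the convergence of the individual coordinates $\alpha_i^{\pm}$, $\gamma_1$, $\delta$; one has to invoke (as is done in \cite{BorodinOlshanskiErgodic} and \cite{Qiu}) that this marginal convergence does upgrade to joint weak convergence of the measures $(\mathfrak{r}_N)_*M_N$ in the topology of $\Omega$, which relies on tightness arguments specific to the space $\Omega$. Beyond that, the argument is essentially a bookkeeping exercise combining the two intertwining relations already established.
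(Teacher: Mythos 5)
Your proof is correct and follows essentially the same route as the paper's: you establish that $\{\mu_N P_N(t)\}_{N\ge 1}$ is coherent via the level-$N$ intertwining, identify $\mu P_\infty(t)$ as its boundary measure via the level-$\infty$ intertwining, and then invoke the Borodin--Olshanski/Qiu convergence criterion recalled just before the statement. The only difference is a slightly more explicit remark about upgrading coordinate-wise convergence to weak convergence of the pushforward measures on $\Omega$, a point the paper treats as implicit in the cited criterion.
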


\begin{proof}

The key observation is, that if $\{\mu_N\}_{N\ge 1}$ is a consistent family of measures then for any fixed $t \ge 0$ , $\{\mu_NP_N(t)\}_{N\ge 1}$, i.e. the laws of $X^{(N)}(t)$ if $X^{(N)}(0)\overset{d}{=}\mu_N$, form a coherent sequence as well. This can be seen as follows,
\begin{align*}
\mu_{N+1}P_{N+1}(t) \Lambda_N^{N+1}=\mu_{N+1}\Lambda^{N+1}_NP_N(t)=\mu_N P_N(t),
\end{align*}
and moreover, if $\mu$ is the probability measure on $\Omega$ corresponding to $\{\mu_N\}_{N\ge 1}$ then we have,
\begin{align*}
\mu P_{\infty}(t)\Lambda_N^{\infty}=\mu\Lambda_N^{\infty}P_N(t)=\mu_NP_N(t).
\end{align*}

Thus, if the initial conditions converge as $N \to \infty$,
\begin{align*}
\alpha_i^+\left(x^{(N)};0\right)&\overset{d}{\longrightarrow} \alpha_i^+\left(0\right)\ , \forall i \ge 1,\\
\alpha_i^-\left(x^{(N)};0\right)&\overset{d}{\longrightarrow} \alpha_i^-\left(0\right)\ , \forall i \ge 1,\\
\gamma_1\left(x^{(N)};0\right)&\overset{d}{\longrightarrow} \gamma_1\left(0\right),\\
\delta\left(x^{(N)};0\right)&\overset{d}{\longrightarrow} \delta\left(0\right),
\end{align*}
where each $x^{(N)}$ is sampled according to the\textit{ coherent measures} $\mu_N$ and $\alpha_i^+\left(0\right),\alpha_i^-\left(0\right),\gamma_1\left(0\right),\delta\left(0\right)$ according to $\mu$ (we are abusing notation here, the parameter $0$ really corresponds to time and has nothing to do with $\omega$) then, for any \textit{fixed} $t\ge 0$ we have as $N \to \infty$,
\begin{align*}
\alpha_i^+\left(x^{(N)};t\right)&\overset{d}{\longrightarrow} \alpha_i^+\left(t\right)\ , \forall i \ge 1,\\
\alpha_i^-\left(x^{(N)};t\right)&\overset{d}{\longrightarrow} \alpha_i^-\left(t\right)\ , \forall i \ge 1,\\
\gamma_1\left(x^{(N)};t\right)&\overset{d}{\longrightarrow} \gamma_1\left(t\right),\\
\delta\left(x^{(N)};t\right)&\overset{d}{\longrightarrow} \delta\left(t\right),
\end{align*}
where, the $\left(\alpha^{\pm}_i(t),\gamma_1(t),\delta(t) \right)$ have the law of $\mu P_{\infty}(t)$ (or equivalently they are just $\mathsf{X}_{\infty}(t)$ written out in coordinates if $\mathsf{X}_{\infty}(0)\overset{d}{=}\mu$). This is exactly what we wanted to prove.
\end{proof}

The result above, although general might seem rather weak as a convergence statement but note however that since any point $\omega\in\Omega$ is given (by definition) by an extremal sequence of coherent probability measures Proposition \ref{propositionapproximation} completely characterizes the abstract semigroup $P_{\infty}(t)$ and thus also $\left(\mathsf{X}_{\infty}(t);t\ge 0\right)$. As we shall see in Subsection \ref{remarkDyson} below, much stronger convergence results can be obtained on a case by case basis.

\iffalse
\begin{rmk}
It would still be interesting however to try to prove convergence of the semigroups in general, as in Section 3 of \cite{BorodinOlshanskiThoma}, which then, by virtue of Theorem 19.25 of \cite{Kallenberg} for example, gives weak convergence as processes in $\mathsf{D}\left(\mathbb{R}_+,\Omega\right)$ as long as the initial distributions converge (even if they not coherent). 
\end{rmk}
\fi 

\subsection{Dynamical systems on $\Omega$ coming from Dyson Brownian motions} \label{remarkDyson}

As already mentioned in the introduction, Dyson Brownian motions $(DBM)$ of different dimensions, given by the solution to the $SDEs$,
\begin{align*}
dX^N_i(t)=dW^N_i(t)+\sum_{j\ne i}^{}\frac{1}{X^N_i(t)-X^N_j(t)}dt,
\end{align*}
and with semigroups denoted by $P^N_{DBM}(t)$ are also consistent with the links $\Lambda_N^{N+1}$, see \cite{Warren},\cite{RamananShkolnikov}. We hence, again obtain a Feller-Markov process $\mathsf{X}^{DBM}_{\infty}$ on $\Omega$ that however has no invariant probability measure. We now describe the boundary process explicitly.

\begin{prop}
The process on $\Omega$ corresponding to Dyson Brownian motions:
\begin{align*}
\left(\mathsf{X}^{DBM}_{\infty}(t);t\ge 0\right)=\left(\alpha^{\pm}_i(t),\gamma_1(t),\gamma_2(t);t\ge 0\right)
\end{align*}
is given by, $ \forall  t \ge 0$:
\begin{align*}
\alpha_i^+(t)&=\alpha_i^+(0), \forall i \ge 1,\\
 \alpha_i^-(t)&=\alpha_i^-(0), \forall i \ge 1,\\
 \gamma_1(t)&=\gamma_1(0),\\
 \gamma_2(t)&=t+\gamma_2(0).
\end{align*}
Thus, it increases the Gaussian component linearly in time while it does nothing to the rest.
\end{prop}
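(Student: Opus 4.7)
The plan is to leverage the matrix realisation of Dyson Brownian motion together with the $\mathbb{U}(\infty)$-invariance picture encoded in Theorem \ref{ergodic}. Fix an arbitrary starting point $\omega\in \Omega$, and consider the coherent family $\mu_N^\omega := \Lambda_N^\infty(\omega,\cdot)$, which by the description in Section \ref{sectionergodicboundary} is precisely the distribution of $\mathsf{eval}_N\circ \pi_N^\infty(M_\omega)$, where $M_\omega$ is the ergodic $\mathbb{U}(\infty)$-invariant measure with characteristic function $\prod_j F_\omega(r_j)$. I would then show that, starting from this family, one step of the Dyson semigroup simply shifts the $\gamma_2$-parameter by $t$ at every finite level, and conclude via the intertwining relation $P_\infty^{DBM}(t)\Lambda_N^\infty = \Lambda_N^\infty P_N^{DBM}(t)$.

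The first step is Dyson's classical matrix theorem: if an $N\times N$ Hermitian matrix $X_0$ is perturbed by an independent Hermitian Brownian motion $H_t^{(N)}$ (diagonal entries of variance $t$, off-diagonal real and imaginary parts of variance $t/2$), then the ordered eigenvalues of $X_0 + H_t^{(N)}$ evolve exactly according to the $N$-particle SDE $dX_i = dW_i + \sum_{j\neq i}(X_i-X_j)^{-1}dt$. Consequently, if $X^{(N)}(0)\sim \mu_N^\omega$, then $X^{(N)}(t)$ is equidistributed with the eigenvalues of $\pi_N^\infty(M_\omega + H_t^{(\infty)})$, where $H_t^{(\infty)}$ is the natural inductive-limit Hermitian Brownian motion, independent of $M_\omega$.

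The second step identifies this as a new ergodic measure. Since $M_\omega$ and $H_t^{(\infty)}$ are independent and both $\mathbb{U}(\infty)$-invariant, so is their sum, and the Fourier transform factorises on diagonal arguments:
\begin{align*}
F_{M_\omega + H_t^{(\infty)}}(r) = F_\omega(r)\, e^{-tr^2/2}.
\end{align*}
Inspection of the explicit product formula for $F_\omega$ shows that this is precisely $F_{\omega(t)}$, where $\omega(t)\in\Omega$ is obtained from $\omega$ by leaving the parameters $\alpha_i^\pm$ and $\gamma_1$ unchanged and replacing $\gamma_2$ by $\gamma_2(\omega)+t$ (equivalently $\delta$ by $\delta(\omega)+t$, which again satisfies the constraint in (\ref{boundarydefinition})). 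By the uniqueness part of Theorem \ref{ergodic} and Fourier injectivity this yields $M_\omega + H_t^{(\infty)} \stackrel{d}{=} M_{\omega(t)}$, and hence at every finite level
\begin{align*}
\mu_N^\omega\, P_N^{DBM}(t) = \Lambda_N^\infty(\omega(t),\cdot), \qquad \forall N\geq 1.
\end{align*}

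Finally, the intertwining $P_\infty^{DBM}(t)\Lambda_N^\infty = \Lambda_N^\infty P_N^{DBM}(t)$ evaluated at $\omega$ gives
\begin{align*}
\left(P_\infty^{DBM}(t)(\omega,\cdot)\right)\Lambda_N^\infty \;=\; \Lambda_N^\infty(\omega(t),\cdot) \;=\; \left(\delta_{\omega(t)}\right)\Lambda_N^\infty,
\end{align*}
for every $N\geq 1$. Since coherent sequences of probability measures determine the measure on $\Omega$ uniquely (Lemma \ref{bijection} and Proposition \ref{boundary}), this forces $P_\infty^{DBM}(t)(\omega,\cdot)=\delta_{\omega(t)}$, i.e. the boundary process is deterministic and equal to the dynamical system described in the statement. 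The main potential obstacle is purely bookkeeping: one must check that the normalisation of Hermitian Brownian motion producing the Dyson SDE without a factor of $2$ matches the convention in Remark \ref{DescriptionofErgodicmeasuresremark} for the $\gamma_2$-component, so that the coefficient of $t$ in the update of $\gamma_2$ is exactly $1$; this follows because only the diagonal variance enters the characteristic function on diagonal matrices, giving $F_{H_t^{(\infty)}}(r)=e^{-r^2 t/2}$, which agrees precisely with a pure $\gamma_2=t$ elementary ergodic measure.
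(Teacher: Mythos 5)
Your proof is correct, but it takes a genuinely different route from the paper's. You work entirely at the matrix level: you realise Dyson Brownian motion as the eigenvalue evolution of $X_0 + H^{(N)}_t$, pass to the consistent Hermitian Brownian motion $H^{(\infty)}_t$ on the infinite space $H$, and identify $M_\omega * \mathrm{Law}(H^{(\infty)}_t)$ with the new ergodic measure $M_{\omega(t)}$ by multiplying diagonal characteristic functions, after which extremality/coherency pins down $P_\infty^{DBM}(t)(\omega,\cdot)=\delta_{\omega(t)}$. This is a clean conceptual argument that leans on Theorem \ref{ergodic}, the factorisation of $F_\omega$, and Proposition \ref{boundary}, and it incidentally recovers the deterministic nature of the boundary process as a by-product of the convolution identity rather than having to verify it directly. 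The paper's proof instead works coordinate-by-coordinate at finite $N$ and takes a scaling limit in the spirit of Proposition \ref{propositionapproximation}: it controls $\alpha^{\pm}_i$ via an almost-sure bound on $\sup_{t\le T}$ of an $N$-particle DBM started from the origin (Theorem 3.7 of \cite{SupremumDBM}), identifies $\gamma_1(X^{(N)};\cdot)$ as a rescaled Brownian motion that vanishes, and shows $\delta(X^{(N)};\cdot)$ satisfies an SDE whose martingale part dies as $N\to\infty$, so $d\delta=dt$. Your route is shorter and more structural; the paper's gives stronger, uniform-in-time pathwise information about how the finite-$N$ embeddings $\mathfrak{r}_N(X^{(N)})$ actually converge, which is the point being illustrated in Subsection \ref{remarkDyson}. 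One small remark on your bookkeeping worry at the end: the variance convention is indeed consistent, because the Hermitian Brownian motion producing the drift-$1$ Dyson SDE has diagonal variance exactly $t$, and only the diagonal variance enters the characteristic function $F_\omega(r)=\mathbb{E}\,e^{ir X_{11}}$ in Theorem \ref{ergodic}, so the $\gamma_2$-update coefficient is exactly $1$.
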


\begin{proof}
We first show that, $\forall T>0$ we have,
\begin{align*}
\underset{1\le i \le N}{\max} \underset{0\le t \le T}{\sup}\left|\alpha_i^{+}\left(X^{(N)};t\right)-\alpha_i^{+}\left(X^{(N)};0\right)\right| \to 0 \textnormal{ almost surely as } N \to \infty.
\end{align*}
This can be seen as follows,
\begin{align*}
\frac{1}{N}\underset{1\le i \le N}{\max} \underset{0\le t \le T}{\sup}\left|\max \{X^{(N)}_{N+1-i}(t),0\}-\max \{X^{(N)}_{N+1-i}(0),0\}\right|&\le\frac{1}{N}\underset{1\le i \le N}{\max} \underset{0\le t \le T}{\sup}\left|X^{(N)}_{N+1-i}(t)-X^{(N)}_{N+1-i}(0)\right|\\
&\le\frac{1}{N}\underset{1\le i \le N}{\max} \underset{0\le t \le T}{\sup}\left|Y^{(N)}_{N+1-i}(t)\right|\\
&=\frac{1}{N}\max\{\underset{0\le t \le T}{\sup}Y^{(N)}_{N}(t),-\underset{0\le t \le T}{\inf} Y^{(N)}_1(t)\},
\end{align*}
where $Y^{(N)}$ is an $N$ particle $DBM$ starting from the origin. But by Theorem 3.7 of \cite{SupremumDBM} we have,
\begin{align*}
\frac{1}{\sqrt{N}}\underset{0\le t \le T}{\sup}Y^{(N)}_{N}(t) \to 2\sqrt {T} \textnormal{ almost surely as } N \to \infty,
\end{align*}
and similarly for $-\frac{1}{\sqrt{N}}\underset{0\le t \le T}{\inf}Y^{(N)}_{1}(t)$. The claim then follows and so since $T>0$ was arbitrary we obtain for $i \in \mathbb{N}$,
\begin{align*}
\alpha_i^+(t)=\alpha_i^+(0) \ , \forall t \ge 0.
\end{align*}
Analogously, for $i \in \mathbb{N}$,
\begin{align*}
\alpha_i^-(t)=\alpha_i^-(0) \ , \forall t \ge 0.
\end{align*}
We now have the following equation for $\gamma_1\left(X^{(N)};\cdot\right)$,
\begin{align*}
d\gamma_1\left(X^{(N)};t\right)=\frac{1}{N}\sum_{i=1}^{N}dW^N_i(t)=\frac{1}{\sqrt{N}}d\beta^N(t),
\end{align*}
where by Levy's characterization $\beta^N$ is a standard Brownian motion and thus as $N\to \infty$,
\begin{align*}
\gamma_1(t)=\gamma_1(0) \ , \forall t \ge 0.
\end{align*}
Finally, after an application of Ito's formula and some manipulations (see for example Step 2 of the proof of Theorem 1.1 in \cite{RamananShkolnikov} for the details) we obtain,
\begin{align*}
d\delta\left(X^{(N)};t\right)=\frac{1}{N^2}\left[N^2dt+2\sqrt{N^2\delta\left(X^{(N)};t\right)}d\tilde{\beta}^N(t)\right]=dt+\frac{1}{N}2\sqrt{\delta\left(X^{(N)};t\right)}d\tilde{\beta}^N(t),
\end{align*}
where $\tilde{\beta}^N$ is a standard Brownian motion. Thus, from Theorem 11.1.4 of \cite{StroockVaradhan} for example, we obtain,
\begin{align*}
\delta(t)=t+\delta(0),
\end{align*}
and so,
\begin{align*}
\gamma_2(t)=t+\gamma_2(0).
\end{align*}
\end{proof}

On the other hand, we could have considered a stationary or Ornstein-Uhlenbeck version of $DBM$. These are given by the solutions to the SDEs,
\begin{align*}
dX^N_i(t)=dW^N_i(t)+\left[-cX^N_i(t)+\sum_{j\ne i}^{}\frac{1}{X^N_i(t)-X^N_j(t)}\right]dt,
\end{align*}
and with semigroups denoted by $P^{c,N}_{OU}(t)$ they are consistent with the links, see \cite{RamananShkolnikov}. For each $N$, we have that $P^{c,N}_{OU}(t)$ has the $GUE_N$ ensemble with variance $\frac{1}{2c}$ as its unique invariant probability measure. Hence, the corresponding Markov process on $\Omega$ has as unique invariant measure a delta function concentrated at $\gamma_2(\omega)=\frac{1}{2c}$ with all the other coordinates $\gamma_1(\omega),\alpha^+_k(\omega),\alpha^{-}_k(\omega)$ being identically zero. Analogous considerations as for DBM, give the following differential equations for the $\alpha^{\pm}_i,\gamma_1$ and $\delta$,
\begin{align*}
\frac{d}{dt}\alpha^{\pm}_i(t)=-c\alpha_i^{\pm}(t) \ , \ \frac{d}{dt}\gamma_1(t)=-c\gamma_1(t) \ ,\ \frac{d}{dt}\delta(t)=(1-2c\delta(t)).
\end{align*}
Solving them, we obtain,
\begin{align*}
\alpha^{\pm}_i(t)=\alpha_i ^{\pm}(0)e^{-ct} \ , \gamma_1(t)=\gamma_1(0)e^{-ct} \ ,\ \delta(t)=\frac{1}{2c}\left(1-e^{-2ct}\right)+\delta(0)e^{-2ct},
\end{align*}
and so,
\begin{align*}
\gamma_2(t)=\frac{1}{2c}\left(1-e^{-2ct}\right)+\gamma_2(0)e^{-2ct}.
\end{align*}
Hence, as already observed above, we can easily see that the delta measure with $\gamma_2=\frac{1}{2c}$ and all other coordinates being 0 is the unique invariant measure and moreover the process converges exponentially fast to it.

\begin{rmk}
It is natural to try to apply the same scheme for the Hua-Pickrell diffusions. As expected, it can be seen at least formally that, in this case both the noise and the long range interactions will still be present in the limit $N\to \infty$ and we will be dealing with a truly infinite dimensional system of $SDEs$ (ISDE). Making rigorous sense of this is not straightforward, however there is some hope that one might be able to treat this with the general theory currently being developed  for such systems of ISDE by Osada and coworkers, see for example \cite{OsadaTanemura}.
\end{rmk}

\section{Dynamics on the path space of the graph of spectra}\label{sectionmultilevel}

\subsection{Multilevel interlacing dynamics}

The goal of this section is to construct a Markov process on the path space of the graph of spectra, such that the projection on level $N$ evolves according to $P_{HP}^{s,N}(t)$. The motivation behind this study is to provide a relation between the discrete dynamics introduced by Borodin and Olshanski in \cite{BorodinOlshanski} on the path space of the Gelfand-Tsetlin graph, that we will elaborate on later on, and the constructions of this paper.

Firstly, continuing with the graph analogy, if a "\textit{vertex}" at level $n$ of the \textit{graph of spectra} corresponds to a point $\left(x_1^{(n)},\cdots , x_n^{(n)}\right)$ in $W^n$, then a \textit{path} with $N$ steps is given by an interlacing array $\left(x_i^{(n)}, 1 \le i \le n \le N:x_i^{(n+1)}\le x_i^{(n)}\le x_{i+1}^{(n+1)}\right)$ or continuous Gelfand-Tsetlin pattern $\mathbb{GT}_c(N)$ with $N$ levels. 

For any $N\ge 1$, we can construct a Markov process on such \textit{paths} or equivalently a Markovian evolution taking values in the space of continuous Gelfand-Tsetlin patterns $\mathbb{GT}_c(N)$, as follows, for $1 \le i \le n \le N$, until a stopping time $\mathfrak{T}_{\mathbb{GT}_c(N)}$ (see below) given by,
\begin{align}\label{multilevelSDE}
dX^{(n)}_i(t)=\sqrt{2((X_i^{(n)})^2(t)+1)}d\beta^{(n)}_i(t)+\left[\left(2-2n-2\Re(s)\right)X_i^{(n)}(t)+2\Im(s)\right]dt+\frac{1}{2}dK_i^{(n),-}(t)-\frac{1}{2}dK_i^{(n),+}(t),
\end{align}
where $K_i^{(n),-}$ and $K_i^{(n),+}$ are the semimartingale local times of $X^{(n)}_i-X^{(n-1)}_{i-1}$ and $X^{(n)}_i-X^{(n-1)}_{i}$ at $0$ and $\beta_i^{(n)}$ for  $1 \le i \le n \le N$ are independent standard Brownian motions. The reader should note here, that the interaction is purely local and moreover that level $n$ given level $n-1$ is \textit{autonomous} consisting of $n$ independent $L_s^{(n)}$-diffusions that are kept apart by the \textit{random barriers} $\left(X_1^{(n-1)},\cdots,X_{n-1}^{(n-1)}\right)$.

There is a slight technical issue here, that corresponds to the fact that two paths at level $n$ (for some $n\le N$) might meet at the stopping time $\mathfrak{T}_{\mathbb{GT}_c(N)}$ given as,
\begin{align*}
\mathfrak{T}_{\mathbb{GT}_c(N)}=\inf\{t>0:\exists \ 1 \le i < j \le n\le N \textnormal {  s.t } X_i^{(n)}(t)=X_j^{(n)}(t)\},
\end{align*}
at which point we must stop the process. However, under some special initial conditions that we are about to define $\mathfrak{T}_{\mathbb{GT}_c(N)}=\infty$ almost surely and in particular the process in $\mathbb{GT}_c(N)$ has infinite lifetime.

Now, let $\nu_N(dx^{(N)})$ be a probability measure on $\mathring{W}^N$ and consider the following measure on $\mathbb{GT}_c(N)$ that we call \textit{central} or \textit{Gibbs},
\begin{align}\label{Gibbs}
\nu_N(dx^{(N)}) Uniform_{\mathbb{GT}_c(N)}^{x^{(N)}}(dx^{(1)},\cdots,dx^{(N-1)}),
\end{align}
where, 
\begin{align*}
Uniform_{\mathbb{GT}_c(N)}^{x^{(N)}}(dx^{(1)},\cdots,dx^{(N-1)})=\frac{\prod_{j=1}^{N-1}j!}{\Delta_N(x^{(N)})}\textbf{1}\left(x ^{(1)}\prec x ^{(2)}\prec \cdots \prec x ^{(N-1)}\prec x ^{(N)}\right)dx^{(1)}\cdots dx^{(N-1)},
\end{align*}
is the uniform distribution on $\mathbb{GT}_c(N)$ with fixed bottom row $x^{(N)}$.
Moreover, observe that:
\begin{align*}
Uniform_{\mathbb{GT}_c(N)}^{x^{(N)}}(dx^{(1)},\cdots,dx^{(N-1)})=\Lambda^N_{N-1}(x^{(N)},dx^{(N-1)})\cdots \Lambda_2^{3}(x^{(3)},dx^{(2)})\Lambda_1^{2}(x^{(2)},dx^{(1)}).
\end{align*}
Then, from Proposition 3.1 in \cite{InterlacingDiffusions} (or one could use the alternative approach of Sun \cite{Sun} combined with Theorem \ref{intertwiningtheorem} above) we obtain:

\begin{prop}
Assume that the system of SDEs with reflection (\ref{multilevelSDE}) is initialized according to a Gibbs measure, for $\nu_N(dx^{(N)})$ a probability measure supported on $\mathring{W}^N$:
\begin{align*}
\nu_N(dx^{(N)}) Uniform_{\mathbb{GT}_c(N)}^{x^{(N)}}(dx^{(1)},\cdots,dx^{(N-1)}).
\end{align*}
Then, the projection on the $n^{th}$ level evolves as a Markov process, with semigroup $P^{s,n}_{HP}(t)$, started according to $\left(\nu_N \Lambda^N_n\right)(dx^{(n)})$ i.e it evolves as,
\begin{align*}
dX^{(n)}_i(t)=\sqrt{2((X_i^{(n)})^2(t)+1)}dW^{(n)}_i(t)+\left[\left(2-2n-2\Re(s)\right)X_i^{(n)}(t)+2\Im(s)+\sum_{j\ne i}^{}\frac{2((X_i^{(n)}(t))^2+1)}{X_i^{(n)}(t)-X_j^{(n)}(t)}\right]dt,
\end{align*}
and in particular $\mathfrak{T}_{\mathbb{GT}_c(N)}=\infty$ almost surely. Moreover, the distribution of $\left(X^{(1)}(T),\cdots,X^{(N)}(T)\right)$ at fixed time $T\ge 0$ is still given by a Gibbs measure:
\begin{align*}
\left[\nu_NP^{s,N}_{HP}(T)\right](dx^{(N)}) Uniform_{\mathbb{GT}_c(N)}^{x^{(N)}}(dx^{(1)},\cdots,dx^{(N-1)}).
\end{align*}
\end{prop}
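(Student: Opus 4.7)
The plan is to invoke Proposition 3.1 of \cite{InterlacingDiffusions}, which is tailor-made for exactly this setting: for a tower of one-dimensional diffusions whose Karlin--McGregor semigroups intertwine with the Vandermonde links $\Lambda^{n+1}_n$, the multilevel interlacing SDE with purely local reflection preserves the central (Gibbs) measures and each level-$n$ projection evolves as a Markov process with the expected $n$-particle semigroup. All the required input is already assembled in this paper: the single-level intertwining
\[
P^{s,n+1}_{HP}(t)\Lambda^{n+1}_n = \Lambda^{n+1}_n P^{s,n}_{HP}(t)
\]
is Theorem \ref{intertwiningtheorem}, and the strong-solution and non-collision properties of the $n$-particle Hua--Pickrell diffusion are Lemma \ref{strongsolutionandsemigroup}. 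As the statement itself notes, an alternative route is to combine Theorem \ref{intertwiningtheorem} with the abstract criterion of Sun \cite{Sun}.

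I would then indicate the structural content of the argument. Starting from the Gibbs initial distribution
\[
\nu_N(dx^{(N)})\,\Lambda^N_{N-1}(x^{(N)},dx^{(N-1)})\cdots\Lambda^2_1(x^{(2)},dx^{(1)}),
\]
the claim to establish is that the joint law at any time $T\ge 0$ retains exactly this Gibbs form with $\nu_N$ replaced by $\nu_N P^{s,N}_{HP}(T)$. The heart of the matter is a generator-level identity: applied to a symmetric observable pulled back from level $n$ and averaged against the Vandermonde links at all lower levels, the reflecting local-time contributions in (\ref{multilevelSDE}) cancel by a boundary-flux computation on $\Lambda^n_{n-1}$, and what remains is precisely the Doob $h$-transformed Hua--Pickrell generator at level $n$. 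This is the infinitesimal form of the intertwining of Theorem \ref{intertwiningtheorem}. Both the Markovianity of the level-$n$ projection with semigroup $P^{s,n}_{HP}(t)$ and the formula for its initial law $\nu_N\Lambda^N_n$ then drop out by reading off the appropriate marginal of the preserved joint Gibbs law.

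Given this, the non-explosion assertion $\mathfrak{T}_{\mathbb{GT}_c(N)}=\infty$ almost surely comes for free: each level-$n$ projection is an $n$-particle Hua--Pickrell diffusion which, by Lemma \ref{strongsolutionandsemigroup}, has no collisions between its coordinates in $W^n$; a union bound over $n\le N$ rules out all intra-level collisions.

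The main obstacle I anticipate, and the reason I would prefer to quote Proposition 3.1 of \cite{InterlacingDiffusions} rather than reproduce the construction in full here, is the pathwise well-posedness of the reflected system (\ref{multilevelSDE}) from possibly degenerate Gibbs initial data. I would handle this by induction on the level from the bottom up: level $1$ is the autonomous $L^{(1)}_s$-diffusion (no reflection terms appear); assuming inductively that the coordinates of level $n-1$ are almost surely strictly ordered for all positive time, level $n$ is then $n$ independent $L^{(n)}_s$-diffusions reflected in the family of $n$ strictly nested continuous time-varying intervals cut out by level $n-1$, a setting in which standard existence and uniqueness for reflected SDEs with continuous obstacles applies. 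This inductive construction also accommodates coincident initial coordinates by the same instantaneous-diffraction mechanism used in Lemma \ref{strongsolutionandsemigroup}.
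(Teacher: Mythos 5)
Your proposal takes essentially the same approach as the paper: the paper gives no proof of its own and simply cites Proposition~3.1 of \cite{InterlacingDiffusions} (with the Sun \cite{Sun} route combined with Theorem~\ref{intertwiningtheorem} noted as an alternative), which is precisely what you invoke. Your additional sketch of what underlies that proposition --- the boundary-flux cancellation of local times against the Vandermonde links, the union bound over levels for $\mathfrak{T}_{\mathbb{GT}_c(N)}=\infty$, and the bottom-up induction for well-posedness of the reflected system --- is consistent with the intended reduction and adds nothing incorrect.
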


\iffalse
\begin{rmk}
We could also more generally have replaced the measure $\nu_N(dx^{(N)})$ by an entrance law $\left(\nu^t_{N,s}(dx^{(N)});t> 0\right)$ for  $P^{s,n}_{HP}(t)$ i.e. $\nu^{t_1}_{N,s}P^{s,n}_{HP}(t_2)=\nu^{t_1+t_2}_{N,s}$.
\end{rmk}
\fi

\subsection{Connection to dynamics for zw-measures on Gelfand-Tsetlin graph}
We now move on, to explain a relation between the dynamics on the path space of the Gelfand-Tsetlin graph constructed by Borodin and Olshanski and the dynamics on the path space of the graph of spectra considered here: under a spacial scaling limit they give rise to the same process in continuous Gelfand-Tsetlin patterns. The reader should note that our discussion below is informal and we shall prove no theorem, moreover the connection between the respective infinite dimensional processes on the boundaries remains mysterious.

We begin by explaining the dynamics of Borodin and Olshanski. First we will need to recall the bare minimum of definitions. A path of length $N$ in the Gelfand-Tsetlin graph is given by a Gelfand-Tsetlin pattern or scheme defined as follows. We will denote by $W^{n}(\mathbb{Z})=\{(x_1,\cdots,x_n)\in \mathbb{Z}^n: x_1 < \cdots < x_{n} \}$ ordered $n$-particle configurations and we will say that $y \in W^{n}(\mathbb{Z})$ and $x \in W^{n+1}(\mathbb{Z})$ interlace if $ x_1 \le y_1 < x_2 \le \cdots \le y_n< x_{n+1}$ and abusing notation we write $y\prec x$. Then, the space of Gelfand-Tsetlin patterns of depth (or height) $N$ is given by:
\begin{align}
\mathbb{GT}(N)=\big\{\left(x^1,\cdots,x^N\right): x^i \prec x^{i+1},\textnormal{ for } 1 \le i\le N-1 \big\}.
\end{align}

\paragraph{Borodin-Olshanski dynamics} The dynamics were introduced in Section 8 of \cite{BorodinOlshanski} and go as follows: each of the $n$ particles on level $n$ has two independent exponential clocks depending on its position $x \in \mathbb{Z}$ for jumping to the right by one with rate $\lambda_n(x)=\left(x-\left(u+n-1\right)\right)\left(x-\left(u'+n-1\right)\right)$ and to the left by one with rate $\mu_n(x)=\left(x+v\right)\left(x+v'\right)$. Here the parameters $u,u',v,v'\in \mathbb{C}$ satisfy certain constraints for the rates to be strictly positive and for the chain not to explode. 
In order for this Markov process to remain in $\mathbb{GT}(N)$ the particles interact through the so called push-block dynamics: There's a hierarchy for the particles, lower level ones can be thought of as heavier or more important. If the exponential clock for jumping to the right of the particle $X_k^n$ rings first, it attempts to jump to the right by one unit. It first looks at the $(n-1)^{th}$ level to check whether it is blocked, namely if $X_k^{n-1}=X_k^n$. In case it is, nothing happens, otherwise it moves by one to the right, possibly triggering some pushing moves. Namely if the interlacing is no longer preserved with the particle labelled $X_{k+1}^{n+1}$ then $X_{k+1}^{n+1}$ also moves (instantaneously) to the right by one. This pushing is propagated to higher levels.

\paragraph{Convergence of dynamics on path space}Intuitively the push-block dynamics are the discrete analogue of the local reflection interactions found in the $SDEs$ above, since particles interact only when the interlacing is about to be broken. The rigorous justification of this goes through the so called Skorokhod problem and usually requires substantial technical efforts and we will not pursue it here.

What we will do however is describe the motion of individual particles on each level under a scaling limit. We will consider the following discrete to continuous scaling limit $x \rightsquigarrow x/M$ and we send $M \to \infty$ for the dynamics on the Gelfand-Tsetlin graph. Note that we just scale space and not time. Then, we formally obtain, modulo the convergence of the discrete push-block dynamics to $SDEs$ with reflection, a process on the path space of the graph of spectra. Particles on level $n$ move according to a diffusion process $\left(G(t);t \ge 0\right)$ with generator:
\begin{align*}
x^2\frac{d^2}{dx^2}+\left(2-2n-\left(u+u'+v+v'\right)\right)x\frac{d}{dx}.
\end{align*}
This is actually a geometric Brownian motion and is given explicitly, in terms of a standard Brownian motion $\beta(t)$:
\begin{align*}
G(t)=G(0)\exp \left(\sqrt{2}\beta(t)+\left(1-2n-\left(u+u'+v+v'\right)\right)t\right).
\end{align*}
We now perform the same spacial, continuous to continuous in this case, scaling limit $x \rightsquigarrow x/M$ with $M\to \infty$ to the Hua-Pickrell dynamics introduced above. Particles on level $n$ will then follow a diffusion with generator:
\begin{align*}
x^2\frac{d^2}{dx^2}+\left(2-2n-2\Re(s)\right)x\frac{d}{dx}.
\end{align*}
The Markov process obtained then coincides with the one we get from the discrete to continuous limit with the identification $2\Re(s)=u+u'+v+v'$. In terms of $SDEs$ with reflection this multilevel process, see also Section 3.6 of \cite{InterlacingDiffusions}, is given by:
\begin{align*}
dX^{(n)}_i(t)=\sqrt{2}|X_i^{(n)}(t)|d\beta^{(n)}_i(t)+\left[\left(2-2n-2\Re(s)\right)X_i^{(n)}(t)\right]dt+\frac{1}{2}dK_i^{(n),-}(t)-\frac{1}{2}dK_i^{(n),+}(t).
\end{align*}

\subsection{Dynamics for multilevel CUE} \label{sectionCUE}
The purpose of this short subsection is to investigate how the results above transfer to the circle $\mathbb{T}$ under the Cayley transform. With $u=e^{i\theta}$ and $u=\frac{i-x}{i+x}$ and $x=i\frac{1-u}{1+u}$ we have,
\begin{align*}
x=\tan\left(\frac{\theta}{2}\right) \textnormal{ or } \theta=2\tan^{-1}(x).
\end{align*}
Then, applying Ito's formula, we get with $u_j^{(N)}(t)=e^{i\theta_j^{(N)}(t)}$ so that $\theta_i^{(N)}(t)=2\tan^{-1}\left(X_i^{(N)}(t)\right)$,
\begin{align}\label{circleSDE}
d\theta_i^{(N)}(t)
&=2\sqrt{2}\cos\left(\frac{\theta_i^{(N)}(t)}{2}\right)dW^{(N)}_i(t)+\bigg[\left(-4N-4\Re(s)\right)\sin\left(\frac{\theta_i^{(N)}(t)}{2}\right)\cos\left(\frac{\theta_i^{(N)}(t)}{2}\right)\nonumber\\&+4\Im(s)\cos^2\left(\frac{\theta_i^{(N)}(t)}{2}\right)+\sum_{j \ne i}^{}\frac{4}{\tan\left(\frac{\theta_i^{(N)}(t)}{2}\right)-\tan\left(\frac{\theta_j^{(N)}(t)}{2}\right)}\bigg]dt.
\end{align}
Thus, the process has generator acting on $C^2_c\left(W^N\left(-\pi,\pi\right)\right)$, twice continuously differentiable functions with compact support in $W^N$; this class of functions is sufficiently large to characterize the distribution of $\left(\theta_1^{(N)}(t),\cdots,\theta_N^{(N)}(t);t\ge 0\right)$ since neither $\pm\pi$ or $\partial W^N$ are ever reached (as these correspond to explosions to $\pm \infty$ and collisions for the $SDEs$ (\ref{HuaPickrellDiffusion})), given by the differential operator,
\begin{align*}
\mathfrak{L}_s^{(N)}=4\sum_{i=1}^{N}\cos^2\left(\frac{\theta_i}{2}\right)\partial^2_{\theta_i}+\sum_{i=1}^{N}\bigg[\left(-4N-4\Re(s)\right)\sin\left(\frac{\theta_i}{2}\right)\cos\left(\frac{\theta_i}{2}\right)+4\Im(s)\cos^2\left(\frac{\theta_i}{2}\right)+\\
\sum_{j \ne i}^{}\frac{4}{\tan\left(\frac{\theta_i}{2}\right)-\tan\left(\frac{\theta_j}{2}\right)}\bigg]\partial_{\theta_i}.
\end{align*}
This operator can in fact be written as an $h$-transform, as follows,
\begin{align*}
\mathfrak{L}_s^{(N)}=h^{-1}_N(\theta)\circ \sum_{i=1}^{N}L^{s,(N)}_{\theta_i}\circ h_N(\theta)-const_{N,s},
\end{align*}
where the one dimensional diffusion operators are given by,
\begin{align*}
L^{s,(N)}_{\theta_i}=4\cos^2\left(\frac{\theta_i}{2}\right)\frac{d^2}{d\theta_i^2}+\left[\left(-4N-4\Re(s)\right)\sin\left(\frac{\theta_i}{2}\right)\cos\left(\frac{\theta_i}{2}\right)+4\Im(s)\cos^2\left(\frac{\theta_i}{2}\right)\right]\frac{d}{d\theta_i},
\end{align*}
and the positive eigenfunction $h_N$,
\begin{align*}
h_N(\theta)=\prod_{1\le i < j \le N}^{}\left(\tan\left(\frac{\theta_j}{2}\right)-\tan\left(\frac{\theta_i}{2}\right)\right).
\end{align*}
The process (\ref{circleSDE}) above leaves $\mathfrak{C}_{*}\left(\mu_{HP}^{s,N}\right)$ invariant, in particular $CUE_N$ for $s=0$ .

\iffalse
We proceed to check explicitly that, for $s=0$, $\mathfrak{L}_0^{(N)}$ indeed leaves $CUE_N$ invariant. Of course, the same argument works for any $\Re(s)>-\frac{1}{2}$. First observe that, with $s=0$ the $L^{0,(N)}$-diffusion has invariant measure on $[-\pi,\pi]$ given by,
\begin{align*}
const \times \cos^{2N-2}\left(\frac{\theta}{2}\right)d\theta.
\end{align*}
Thus, the multidimensional process with generator $\mathfrak{L}_0^{(N)}$ has invariant measure, 
\begin{align*}
const \times \prod_{i=1}^{N}\cos^{2N-2}\left(\frac{\theta_i}{2}\right)\prod_{1 \le i < j \le N}^{}\left(\tan\left(\frac{\theta_j}{2}\right)-\tan\left(\frac{\theta_i}{2}\right)\right)^2d\theta_1\cdots d\theta_N,
\end{align*}
which is easily seen to be equal to $CUE_N$ (given now in terms of the eigenangles),
\begin{align*}
const \times \prod_{1\le i <j \le N}^{}\sin^2\left(\frac{\theta_j-\theta_i}{2}\right)d\theta_1\cdots d\theta_N.
\end{align*}
\fi
\section{Matrix Hua-Pickrell Process} \label{sectionmatrixprocess}

In this section, we define a matrix process with its eigenvalues evolving according to (\ref*{HuaPickrellDiffusion}) and leaving the matrix Hua-Pickrell measure $\mathsf{M}_{HP}^{s,N}(dX)$, with $\Re(s)>-\frac{1}{2}$, defined in (\ref{MatrixHuaPickrellMeasure}) invariant. So, let $\left(\boldsymbol{B}^{(k)}_t;t\ge 0\right)$, for $k=1,2$, be two $N \times N$ matrices with entries independent standard Brownian motions. Moreover, define $\left(\boldsymbol{W}_t; t \ge 0\right)$ by $\boldsymbol{W}_t=\boldsymbol{B}^{(1)}_t+i\boldsymbol{B}_t^{(2)}$ and let $h:\mathbb{R}\to \mathbb{R}$, $g:\mathbb{R}\to \mathbb{R}$, $b:\mathbb{R}\to \mathbb{R}$ and $\alpha \in \mathbb{R}$. Consider the following stochastic process $\left(\boldsymbol{X}_t; t \ge 0\right)$, taking values in the space of $N \times N$ Hermitian matrices and verifying the matrix valued $SDE$,
\begin{align}
d\boldsymbol{X}_t=g(\boldsymbol{X}_t)d\boldsymbol{W}_th(\boldsymbol{X}_t)+h(\boldsymbol{X}_t)d\boldsymbol{W}_t^*g(\boldsymbol{X}_t)+\left(b(\boldsymbol{X}_t)+\alpha Tr\left(\boldsymbol{X}_t\right)\boldsymbol{I}\right)dt,
\end{align}
where $h(\boldsymbol{X}_t), g(\boldsymbol{X}_t),b(\boldsymbol{X}_t)$ are defined spectrally; more precisely for a diagonalization of a Hermitian matrix $H=U^*\Lambda U$ with $U \in \mathbb{U}(N)$ and $\Lambda=\textnormal{diag}(\lambda_1,\cdots,\lambda_N)$, $g(H)=U^*g(\Lambda) U$ where $g(\Lambda)=\textnormal{diag}\left(g(\lambda_1),\cdots,g(\lambda_N)\right)$. Define the function $G:\mathbb{R}\times \mathbb{R} \to \mathbb{R}$ given by,
\begin{align*}
G(x,y)=g^2(x)h^2(y)+g^2(y)h^2(x).
\end{align*}
Denote by $\left(\Lambda_t;t \ge 0\right)=\left(\lambda_1(t),\cdots,\lambda_N(t);t \ge 0\right)$, the projection on the eigenvalues of $\left(\boldsymbol{X}_t; t \ge 0\right)$. Then if $\boldsymbol{X}_0$ has distinct eigenvalues almost surely we obtain the following \textit{closed} (note there is no dependence on the eigenvectors) system of $SDEs$ for the eigenvalues where the $\{\beta_i\}_{i=1}^N$ are independent standard (real) Brownian motions,
\begin{align*}
d\lambda_i(t)=2h(\lambda_i(t))g(\lambda_i(t))d\beta_i(t)+\left(b(\lambda_i(t))+\alpha\sum_{k=1}^{N}\lambda_k(t)+2\sum_{k\ne i}^{}\frac{G\left(\lambda_i(t),\lambda_k(t)\right)}{\lambda_i(t)-\lambda_k(t)}\right)dt,
\end{align*}
up to the first collision time $\tau=\inf\{t\ge0:\exists \ i,j \textnormal{ such that } \lambda_i(t)=\lambda_j(t)\}$. This is essentially Theorem 4 of \cite{MatrixYamadaWatanabe}, with the only variation being that, we have the extra drift term $\alpha Tr\left(\boldsymbol{X}_t\right)\boldsymbol{I}$ which obviously gives the contribution $\alpha\sum_{k=1}^{N}\lambda_k(t)$ in the drift of the $SDEs$ for the eigenvalues. 

We now specialize to the case of interest and we take,
\begin{align*}
h(x)=\sqrt{\frac{1+x^2}{2}}, g(x)\equiv 1,b(x)=(1-N-2\Re(s))x+2\Im(s),\alpha = 1,
\end{align*}
so that $\left(\boldsymbol{X}_t; t \ge 0\right)$ satisfies,
\begin{align}\label{matrixhuapickrelldiffusion}
d\boldsymbol{X}_t=d\boldsymbol{W}_t\sqrt{\frac{I+\boldsymbol{X}_t^2}{2}}+\sqrt{\frac{I+\boldsymbol{X}_t^2}{2}}d\boldsymbol{W}_t^*+\left[(-N-2\Re(s))\boldsymbol{X}_t+2\Im(s)\boldsymbol{I}+ Tr\left(\boldsymbol{X}_t\right)\boldsymbol{I}\right]dt.
\end{align}
With some simple algebra, using the fact,
\begin{align*}
\sum_{k\ne i}^{}\frac{\lambda_k^2(t)}{\lambda_i(t)-\lambda_k(t)}=\sum_{k\ne i}^{}\frac{\lambda_i^2(t)}{\lambda_i(t)-\lambda_k(t)}-(N-2)\lambda_i(t)-\sum_{k=1}^{N}\lambda_k(t),
\end{align*}
we obtain the system of $SDEs$ (\ref{HuaPickrellDiffusion}),
\begin{align*}
d\lambda_i(t)=\sqrt{2(1+\lambda^2_i(t))}d\beta_i(t)+\left(2\Im(s)+\left(2-2N-2\Re(s)\right)\lambda_i(t)+\sum_{j\ne i}^{}\frac{2\left(1+\lambda^2_i(t)\right)}{\lambda_i(t)-\lambda_j(t)}\right)dt.
\end{align*}
Thus, the eigenvalues $\left(\Lambda_t;t \ge 0\right)$ of $\left(\boldsymbol{X}_t; t \ge 0\right)$ form a Hua-Pickrell diffusion. Moreover, since the system of $SDEs$ (\ref{HuaPickrellDiffusion}) has no collisions and does not explode we also get $\tau=\infty$ almost surely (this again can be seen in a couple of ways in analogy to Proposition \ref{FellerSemigroup} namely either using Theorem 2.2 of \cite{Graczyk}, which amounts to a classical argument due to McKean, or the fact that the process is a Doob $h$-transform of identical one dimensional diffusions killed when they intersect). We now prove the following properties for $\left(\boldsymbol{X}_t; t \ge 0\right)$.

\begin{lem}
The SDE (\ref{matrixhuapickrelldiffusion}) has a unique strong solution.
\end{lem}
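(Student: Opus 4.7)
The plan is to verify the hypotheses of a standard strong existence and uniqueness theorem for It\^o SDEs on the finite-dimensional vector space $H(N)$ of Hermitian matrices. The drift $X \mapsto (-N-2\Re(s))X + 2\Im(s)\boldsymbol{I} + Tr(X)\boldsymbol{I}$ is manifestly affine in $X$, hence globally Lipschitz with linear growth, so the only substantive point is to control the diffusion coefficient $X \mapsto \sqrt{(I + X^2)/2}$.

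I would argue that this map is smooth, and in particular locally Lipschitz, on all of $H(N)$. For any $X \in H(N)$ the matrix $I + X^2$ is positive definite with spectrum contained in $[1,\infty)$, uniformly bounded away from zero. Since $y \mapsto \sqrt{y/2}$ is $C^{\infty}$ on $(0,\infty)$, the finite-dimensional functional calculus shows that $Y \mapsto \sqrt{Y/2}$ is $C^{\infty}$ on the open cone of positive definite Hermitian matrices, and composing with the polynomial map $X \mapsto I + X^2$ yields the claim. Moreover the operator norm of $\sqrt{(I + X^2)/2}$ is at most $\sqrt{(1+\|X\|^2)/2}$, so the diffusion coefficient has at most linear growth. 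Standard SDE theory on $H(N)$ (locally Lipschitz plus linear growth, applied componentwise to the entries of the Hermitian matrix) then delivers pathwise uniqueness and local strong existence for (\ref{matrixhuapickrelldiffusion}).

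To upgrade the local solution to a global one I would rule out finite-time explosion. The cleanest route is to invoke Lemma \ref{strongsolutionandsemigroup}: the computation carried out in the paragraph preceding the lemma shows that along any maximal local matrix solution the eigenvalue process $(\lambda_1(t),\dots,\lambda_N(t))$ satisfies the closed system (\ref{HuaPickrellDiffusion}), and Lemma \ref{strongsolutionandsemigroup} guarantees that this system has no collisions and no explosions, so that $\|\boldsymbol{X}_t\|_{HS}^2 = \sum_i \lambda_i(t)^2$ remains almost surely finite for all $t \ge 0$; this precludes explosion of $\boldsymbol{X}_t$. Alternatively one could apply It\^o's formula directly to the Lyapunov function $Tr(\boldsymbol{X}_t^2)$, using $|Tr(\boldsymbol{X})| \le \sqrt{N \cdot Tr(\boldsymbol{X}^2)}$ and a standard Gronwall argument on the process stopped at $\inf\{t : Tr(\boldsymbol{X}_t^2) \ge R\}$.

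The main obstacle is really the first step: the functional-calculus smoothness of $X \mapsto \sqrt{I + X^2}$, which underlies the local Lipschitz estimate, is classical but must be handled carefully enough to get an effective Lipschitz constant on each bounded subset of $H(N)$. Once that is in hand, everything else is routine, and this is precisely why the author emphasizes that the matrix process is "better behaved from an SDE point of view" than its one-dimensional eigenvalue projection.
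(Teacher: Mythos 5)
Your proof is correct but takes a different and somewhat longer route than the paper. The paper's observation is sharper: the scalar function $h(x)=\sqrt{(1+x^2)/2}$ has derivative $x/\sqrt{2(1+x^2)}$ bounded by $1/\sqrt 2$, so it is \emph{globally} Lipschitz on $\mathbb{R}$; by Theorem 1.1 of \cite{MatrixLipschitz} (Wihler), the spectrally-defined matrix map $\boldsymbol{X}\mapsto h(\boldsymbol{X})=\sqrt{(I+\boldsymbol{X}^2)/2}$ inherits the same global Lipschitz property on Hermitian matrices in the Frobenius norm. Together with the affine (hence Lipschitz) drift, this places (\ref{matrixhuapickrelldiffusion}) directly under the classical globally-Lipschitz strong existence and uniqueness theorem (Theorem 3.1, p.~164 of \cite{IkedaWatanabe}), with no separate non-explosion step required. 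You instead derive only \emph{local} Lipschitzness via smoothness of the functional calculus on the positive cone, together with a linear growth bound, and then must rule out explosion by a second argument. Both of your two routes for that second step work, but they are more involved than necessary: the Lyapunov argument with $\mathrm{Tr}(\boldsymbol{X}_t^2)$ is standard and self-contained, while the route through Lemma~\ref{strongsolutionandsemigroup} is a little delicate because the closed eigenvalue SDE is derived only on $\{t<\tau\}$ with $\tau$ the first collision time and assumes distinct initial eigenvalues (so one must invoke pathwise uniqueness of (\ref{HuaPickrellDiffusion}) to identify the eigenvalue projection with the non-exploding solution before concluding $\|\boldsymbol{X}_t\|_{HS}$ stays finite). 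In short: what the paper buys with Wihler's theorem is a one-shot global Lipschitz estimate; what your proof buys is independence from that auxiliary matrix-function result, at the cost of a local-to-global upgrade via a stopping/Gronwall argument.
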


\begin{proof}
First, note that we can write the matrix SDE (\ref{matrixhuapickrelldiffusion}) in vectorized form in terms of the real and imaginary parts of the entries. This gives a system of $N^2$ one-dimensional stochastic equations driven by $2N^2$ independent standard real Brownian motions, coming from $\boldsymbol{W}_t$. 

Now, observe that the drift term of the vectorized equation is clearly Lipschitz (it is just linear). Moreover, since $h(x)=\sqrt{\frac{1+x^2}{2}}$ is Lipschitz then, by Theorem 1.1 of \cite{MatrixLipschitz}, the matrix function $\boldsymbol{X} \mapsto \sqrt{\frac{I+\boldsymbol{X}^2}{2}}$ is also Lipschitz in any matrix norm and in particular in the Frobenius norm, namely the Euclidean norm of the vector of the entries. Hence, the diffusion term in this SDE system is Lipschitz as well. Thus, by Theorem 3.1 page 164 of \cite{IkedaWatanabe} for example, we obtain a unique strong solution to (\ref{matrixhuapickrelldiffusion}).
\end{proof}

\begin{prop}
For $\Re(s)>-\frac{1}{2}$, $\mathsf{M}_{HP}^{s,N}(dX)$ is invariant for $\left(\boldsymbol{X}_t; t \ge 0\right)$.
\end{prop}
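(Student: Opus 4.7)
The plan is to reduce the invariance statement to two ingredients: the already established invariance of $\mu_{HP}^{s,N}$ for the eigenvalue process (Proposition \ref{invariant}), and the fact that a $\mathbb{U}(N)$-invariant probability measure on $H(N)$ is determined by its radial part. The matrix measure $\mathsf{M}_{HP}^{s,N}$ is by construction $\mathbb{U}(N)$-invariant with radial part $\mu_{HP}^{s,N}$, so the game is to show that if we start the matrix diffusion from $\mathsf{M}_{HP}^{s,N}$, then at time $t$ the law is again $\mathbb{U}(N)$-invariant with radial part $\mu_{HP}^{s,N}$, hence equal to $\mathsf{M}_{HP}^{s,N}$.

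First, I would check that $\mathbb{U}(N)$-invariance of the one-dimensional distributions is preserved by the dynamics. Fix a deterministic $U\in \mathbb{U}(N)$ and set $\boldsymbol{Y}_t=U^*\boldsymbol{X}_tU$. Using that the spectral functional calculus is covariant, $U^*f(\boldsymbol{X}_t)U=f(\boldsymbol{Y}_t)$, together with the conjugation invariance of the trace, the SDE (\ref{matrixhuapickrelldiffusion}) transforms into
\begin{align*}
d\boldsymbol{Y}_t&=\left(U^*d\boldsymbol{W}_tU\right)\sqrt{\tfrac{I+\boldsymbol{Y}_t^2}{2}}+\sqrt{\tfrac{I+\boldsymbol{Y}_t^2}{2}}\left(U^*d\boldsymbol{W}_t^*U\right)\\
&\quad+\left[(-N-2\Re(s))\boldsymbol{Y}_t+2\Im(s)\boldsymbol{I}+\mathrm{Tr}(\boldsymbol{Y}_t)\boldsymbol{I}\right]dt.
\end{align*}
Since $\boldsymbol{W}_t$ is an $N\times N$ matrix of i.i.d.\ complex Brownian motions, $\widetilde{\boldsymbol{W}}_t:=U^*\boldsymbol{W}_tU$ has the same law, so $\boldsymbol{Y}_t$ satisfies the same SDE as $\boldsymbol{X}_t$ (in law) with initial condition $U^*\boldsymbol{X}_0U$. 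By strong uniqueness for (\ref{matrixhuapickrelldiffusion}), this gives the pathwise identity in law, and in particular if $\mathrm{law}(\boldsymbol{X}_0)$ is $\mathbb{U}(N)$-invariant then so is $\mathrm{law}(\boldsymbol{X}_t)$ for all $t\ge 0$.

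Next I would use the radial projection. Since $\mathsf{M}_{HP}^{s,N}$ is $\mathbb{U}(N)$-invariant with $(\mathsf{eval}_N)_*\mathsf{M}_{HP}^{s,N}=\mu_{HP}^{s,N}$, and since the eigenvalue process $(\Lambda_t)_{t\ge 0}$ of $(\boldsymbol{X}_t)$ solves (\ref{HuaPickrellDiffusion}), whose semigroup is $P_{HP}^{s,N}(t)$ and for which $\mu_{HP}^{s,N}$ is invariant by Proposition \ref{invariant}, we obtain that if $\boldsymbol{X}_0\sim \mathsf{M}_{HP}^{s,N}$ then $(\mathsf{eval}_N)_*\mathrm{law}(\boldsymbol{X}_t)=\mu_{HP}^{s,N}$ for every $t\ge 0$. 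Combined with the preservation of $\mathbb{U}(N)$-invariance established above, $\mathrm{law}(\boldsymbol{X}_t)$ is a $\mathbb{U}(N)$-invariant probability measure on $H(N)$ with the same radial part as $\mathsf{M}_{HP}^{s,N}$.

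Finally I would invoke the standard fact that a $\mathbb{U}(N)$-invariant probability measure on $H(N)$ is uniquely determined by its radial part (this is immediate from the decomposition $H=U^*\Lambda U$ together with the Weyl integration formula, or equivalently from the fact that any such measure is obtained from its radial part by conditioning on eigenvalues and then taking a Haar-distributed unitary conjugation). This forces $\mathrm{law}(\boldsymbol{X}_t)=\mathsf{M}_{HP}^{s,N}$ for all $t\ge 0$, which is the claim. The main subtlety to nail down cleanly is the first step: one has to be careful that $U^*\boldsymbol{W}_tU$ is genuinely a matrix of i.i.d.\ complex Brownian motions (which follows because the $2N^2$-dimensional real Gaussian distribution of the entries of $\boldsymbol{W}_t$ is invariant under the orthogonal transformation $\boldsymbol{W}\mapsto U^*\boldsymbol{W}U$), and to apply strong uniqueness of the matrix SDE to pass from an identity in law of the driving processes to an identity in law of the solutions.
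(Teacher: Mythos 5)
Your proof is correct and follows essentially the same route as the paper: establish $\mathbb{U}(N)$-invariance of the matrix dynamics by conjugating the SDE and using unitary invariance of the matrix Brownian motion, then combine with Proposition~\ref{invariant} for the eigenvalue process and the fact that a $\mathbb{U}(N)$-invariant law on $H(N)$ is determined by its radial part. You have simply spelled out more explicitly the last reduction step that the paper leaves implicit.
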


\begin{proof}
Observe that this follows from the $\mathbb{U}(N)$-invariance of $\left(\boldsymbol{X}_t; t \ge 0\right)$ and the fact that $\left(\Lambda_t;t \ge 0\right)$, by Proposition \ref{invariant} has $\mu_{HP}^{s,N}$, with $\Re(s)>-\frac{1}{2}$, as its unique invariant measure. To see the $\mathbb{U}(N)$-invariance of $\left(\boldsymbol{X}_t; t \ge 0\right)$, define for $U \in \mathbb{U}(N)$ $\left(\boldsymbol{Y}_t; t \ge 0\right)=\left(U^*\boldsymbol{X}_tU; t \ge 0\right)$ and observe that $\left(\boldsymbol{Y}_t; t \ge 0\right)$ also satisfies (\ref{matrixhuapickrelldiffusion}),
\begin{align*}
d\boldsymbol{Y}_t=d\boldsymbol{\tilde{W}}_t\sqrt{\frac{I+\boldsymbol{Y}_t^2}{2}}+\sqrt{\frac{I+\boldsymbol{Y}_t^2}{2}}d\boldsymbol{\tilde{W}}_t^*+\left[(1-N-2\Re(s))\boldsymbol{Y}_t+2\Im(s)\boldsymbol{I}+ Tr\left(\boldsymbol{Y}_t\right)\boldsymbol{I}\right]dt,
\end{align*}
with $\left(\boldsymbol{\tilde{W}}_t;t \ge 0\right)=\left(U^*\boldsymbol{W}_tU;t\ge 0\right)\overset{law}{=}\left(\boldsymbol{W}_t;t\ge 0\right)$ by unitary invariance of Brownian motion, from which, if moreover $U^*\boldsymbol{X}_0U\overset{law}{=}\boldsymbol{X}_0$, the conclusion follows.
\end{proof}

We now give an alternative and rather neat proof for the fact that the semigroup $P^{s,N}_{HP}(t)$ has the Feller property, by appealing to known results. Since $\boldsymbol{X}_t$ solves an SDE with globally Lipschitz coefficients it is well known that it has the Feller property, see for example Theorem 19.9 of \cite{Schiling}. We denote by $\mathcal{S}^N(t)$ its semigroup. Note that the presence of the repulsive singular term does not allow us to apply this result directly to the eigenvalues. Moreover, observe that $f \mapsto f\circ \mathsf{eval}_N$ maps $C_0\left(W^N\right)$ to $C_0\left(H(N)\right)$.

\begin{prop}
The semigroup $P^{s,N}_{HP}(t)$, associated to $\mathsf{eval}_N(\boldsymbol{X}_t)$, has the Feller property.
\end{prop}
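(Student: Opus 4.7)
The plan is to transfer the Feller property from $\mathcal{S}^N(t)$ on $H(N)$ down to the eigenvalue semigroup via the continuous map $\mathsf{eval}_N: H(N)\to W^N$, exploiting the unitary invariance of $(\boldsymbol{X}_t;t\ge 0)$.

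First, I would observe that the pullback $f\mapsto f\circ \mathsf{eval}_N$ is a linear isometry from $C_0(W^N)$ into $C_0(H(N))$: continuity is clear from continuity of $\mathsf{eval}_N$; vanishing at infinity follows because if $\|X_n\|_{F}\to\infty$ then $\sum_i\lambda_i(X_n)^2\to\infty$, so at least one eigenvalue tends to $\pm\infty$, forcing $\mathsf{eval}_N(X_n)\to\infty$ in $W^N$. Moreover, by the unitary invariance of the matrix SDE (\ref{matrixhuapickrelldiffusion}) proven in the previous proposition, if $F:H(N)\to\mathbb{R}$ is $\mathbb{U}(N)$-invariant then so is $\mathcal{S}^N(t)F$; hence for any $f\in C_0(W^N)$ there exists a unique function $g_t:W^N\to\mathbb{R}$ with
\begin{equation*}
\mathcal{S}^N(t)\bigl(f\circ \mathsf{eval}_N\bigr)(X) \;=\; g_t\bigl(\mathsf{eval}_N(X)\bigr), \qquad X\in H(N),
\end{equation*}
and by definition of $P_{HP}^{s,N}(t)$ (as the semigroup of $\mathsf{eval}_N(\boldsymbol X_t)$) one has $g_t=P_{HP}^{s,N}(t)f$.

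Next, I would deduce the two Feller conditions for $g_t$ from the corresponding ones for $\mathcal{S}^N(t)(f\circ \mathsf{eval}_N)\in C_0(H(N))$. For continuity: given $y^{(n)}\to y$ in $W^N$, set $X^{(n)}=\mathrm{diag}(y^{(n)})$ and $X=\mathrm{diag}(y)$; then $X^{(n)}\to X$ in $H(N)$, $\mathsf{eval}_N(X^{(n)})=y^{(n)}$, so $g_t(y^{(n)}) = \mathcal{S}^N(t)(f\circ\mathsf{eval}_N)(X^{(n)}) \to \mathcal{S}^N(t)(f\circ\mathsf{eval}_N)(X)=g_t(y)$. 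For vanishing at infinity: if $y^{(n)}\to\infty$ in $W^N$, again take $X^{(n)}=\mathrm{diag}(y^{(n)})$; since $|y^{(n)}_1|+|y^{(n)}_N|\to\infty$ one has $\|X^{(n)}\|_F\to\infty$, and the Feller property of $\mathcal{S}^N(t)$ gives $g_t(y^{(n)})\to 0$. This shows $P_{HP}^{s,N}(t)f\in C_0(W^N)$.

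Finally, for the strong continuity $\lim_{t\downarrow 0}P_{HP}^{s,N}(t)f=f$: by the Feller property of $\mathcal{S}^N(t)$, $\mathcal{S}^N(t)(f\circ\mathsf{eval}_N)\to f\circ\mathsf{eval}_N$ pointwise on $H(N)$ as $t\downarrow 0$ (in fact uniformly, but pointwise suffices). Evaluating at $X=\mathrm{diag}(x)$ for $x\in W^N$ gives $P_{HP}^{s,N}(t)f(x)\to f(x)$ pointwise, which in the presence of the uniform bound $\|P_{HP}^{s,N}(t)f\|_\infty\le\|f\|_\infty$ and the Feller regularity just established suffices; alternatively one obtains uniform convergence directly by specializing the uniform convergence of $\mathcal{S}^N(t)(f\circ\mathsf{eval}_N)$ to the closed subset $\{\mathrm{diag}(x):x\in W^N\}\subset H(N)$. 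The only mild subtlety is ensuring the function $g_t$ is actually well-defined as a map on $W^N$ (not merely on the image of $\mathsf{eval}_N$, which is all of $W^N$ anyway) and that its regularity is not degraded when $y$ has coincident coordinates; this is handled by the diagonal-matrix trick above, which exhibits every $y\in W^N$ as $\mathsf{eval}_N$ of a point depending continuously on $y$.
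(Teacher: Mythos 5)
Your proposal is correct and follows essentially the same strategy as the paper: both transfer the Feller property from the matrix semigroup $\mathcal{S}^N(t)$ on $H(N)$ to the eigenvalue semigroup on $W^N$ by observing that $\mathcal{S}^N(t)(f\circ\mathsf{eval}_N)$ is a $\mathbb{U}(N)$-invariant $C_0$-function (the paper phrases this via autonomy of the eigenvalue SDE, you via unitary invariance of the matrix SDE — equivalent facts), and then evaluating it at a diagonal (or unitarily conjugated diagonal) representative to read off continuity, vanishing at infinity, and strong continuity at $t=0$.
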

\begin{proof}
From the fact that the eigenvalue evolution is autonomous we obtain that $\forall f :W^N\to \mathbb{R}$ we have:
\begin{align*}
\mathcal{S}^N(t)\left(f \circ \mathsf{eval}_N\right)(H) \ \textnormal{only depends on } H \textnormal{ through } \mathsf{eval}_N(H).
\end{align*}
Namely, $\mathsf{eval}_N(\boldsymbol{X}_t)$ only depends on $H$ through $\mathsf{eval}_N(\boldsymbol{X}_0=H)$. Thus, if $x=\mathsf{eval}_N(H)$ we have:
\begin{align*}
\left[P^{s,N}_{HP}(t)f\right](x)=\left[\mathcal{S}^N(t)f\circ \mathsf{eval}_N\right](H)=\left[\mathcal{S}^N(t)f\circ \mathsf{eval}_N\right](U^*xU)\ , \forall U\in \mathbb{U}(N),
\end{align*}
where as before $\mathbb{U}(N)$ is the group of $N\times N$ unitary matrices. 
We proceed to check the Feller property. Since $x_n \to x \implies U^*x_n U \to U^* x U$ we get:
\begin{align*}
\left[P^{s,N}_{HP}(t)f\right](x_n)=\left[\mathcal{S}^N(t)f\circ \mathsf{eval}_N\right](U^*x_nU) \to \left[\mathcal{S}^N(t)f\circ \mathsf{eval}_N\right](U^*xU)=\left[P^{s,N}_{HP}(t)f\right](x).
\end{align*}
Moreover, since $x_n\to \infty \implies U^*x_nU \to \infty$ and $\left[\mathcal{S}^N(t)f\circ \mathsf{eval}_N\right]\in C_0\left(H(N)\right)$ we get:
\begin{align*}
\left[P^{s,N}_{HP}(t)f\right](x_n) \to 0 \ \textnormal{ as } \ x_n \to \infty.
\end{align*}
Finally we have continuity at $t=0$:
\begin{align*}
\underset{t \to 0}{\lim}\left[P^{s,N}_{HP}(t)f\right](x)=\underset{t \to 0}{\lim}\left[\mathcal{S}^N(t)f\circ \mathsf{eval}_N\right](U^*xU)=\left[f\circ \mathsf{eval}_N\right](U^*xU)=f(x).
\end{align*}
The proposition is fully proven.
\end{proof}

Before closing, we remark that under an application of the Cayley transform we obtain a process $\left(\boldsymbol{U}(t);t \ge 0\right)$ on the unitary group $\mathbb{U}(N)$ given by,
\begin{align*}
\boldsymbol{U}(t)=\mathfrak{C}(\boldsymbol{X})(t)=\frac{i-\boldsymbol{X}(t)}{i+\boldsymbol{X}(t)} \in \mathbb{U}(N),
\end{align*}
which has eigenvalues evolving according to $\left(e^{i\theta_1^{(N)}(t)},\cdots,e^{i\theta_N^{(N)}(t)};t \ge 0\right)$.

\begin{rmk}
In the special case $s=0$ note that $\left(\boldsymbol{U}(t);t \ge 0\right)$ is a $\mathbb{U}(N)$ valued process that the projection on its eigenvalues leaves $CUE_N$ invariant but itself is not unitary Brownian motion (and thus neither its spectrum follows circular Dyson Brownian motion abbreviated $cDBM$). In fact given that $cDBM$ can wrap around $\mathbb{T}$ such a multilevel construction of an interlacing process where the number of particles increases by one on each level does not seem possible (see Section 4 of \cite{Metcalfe} for example where a coupling is given for $n$ and $n$ particles of $cDBM$).
\end{rmk}

\section{Appendix}\label{appendix}
\subsection{Proof of intermediate intertwining relation}
In this appendix we give a self-contained proof for the intermediate intertwining relation (\ref{intermediateintertwining}). We essentially distil the arguments of \cite{InterlacingDiffusions} to the bare minimum required to give a proof of (\ref{intermediateintertwining}). 

\paragraph{Notation} We shall fix throughout this section the parameters $N\ge 1$ and $s\in \mathbb{C}$ and in order to ease notation we shall drop them from the superscripts and subscripts. For example, we will write $L$ for the generator of the one-dimensional Hua-Pickrell diffusion $L_s^{(N)}$, $p_t(x,y)$ for its transition density, instead of $p_t^{(N),s}(x,y)$, and so on. We also do the same for its dual $\hat{L}$. We will finally write $P(t)$ for the one-dimensional Feller semigroup the $L_s^{(N)}$-diffusion gives rise to (not to be confused with the $h$-transformed Karlin-McGregor semigroup $P_{HP}^{s,N}(t)$).

We also note that the results regarding smoothness and decay at $\pm \infty$ of the transition density $p_t(x,y)$ obtained in Lemma \ref{FellerSemigroup} also apply to the transition density $\hat{p}_t(x,y)$ of the dual $\hat{L}$-diffusion. We now arrive at a lemma which explains the importance of duality. Essentially, the commutation relation (\ref{SiegmundCommutation}) between one-dimensional differential operators, given in the proof below, is how one arrives at the definition of $\hat{L}$.

\begin{lem}[Siegmund duality] \label{DualityLemma}We have the following relation between the transition densities of the $L$-diffusion and its dual $\hat{L}$-diffusion:
\begin{align*}
-\partial_y\int_{-\infty}^{x}p_t(y,z)dz=\hat{p}_t(x,y).
\end{align*}
\end{lem}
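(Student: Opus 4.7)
The plan is to prove the identity by the standard PDE argument: I will introduce the distribution function $F(t,x,y) := \int_{-\infty}^x p_t(y,z)\,dz = \mathbb{P}_y(X_t \le x)$ and set $\psi(t,x,y) := -\partial_y F(t,x,y)$, then show that $y \mapsto \psi(t,x,y)$ and $y \mapsto \hat p_t(x,y)$ solve the same Cauchy problem for the Fokker--Planck equation of $\hat L$, so they agree by uniqueness. The smoothness and Gaussian-type decay of $p_t$ and its $y$-derivatives obtained in the proof of Lemma \ref{FellerSemigroup} (and the completely analogous bounds for $\hat p_t$, since $\hat L$ is of the same form) will justify all differentiations under the integral and the boundary manipulations below.

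The key algebraic input I will establish is the operator identity $\partial_y L = \hat L^* \partial_y$ on smooth functions of $y$, where $\hat L^*$ denotes the formal $L^2$-adjoint of $\hat L$. Writing $L = (y^2+1)\partial_y^2 + a(y)\partial_y$ with $a(y) = (2-2N-2\Re(s))y + 2\Im(s)$ and $\hat L = (y^2+1)\partial_y^2 + b(y)\partial_y$ with $b(y) = (2N+2\Re(s))y - 2\Im(s)$, a direct computation gives $\hat L^* = (y^2+1)\partial_y^2 + (2y+a(y))\partial_y + a'(y)\,\mathrm{Id}$, and both $\partial_y L$ and $\hat L^*\partial_y$ then expand to
$$(y^2+1)\partial_y^3 + (2y+a(y))\partial_y^2 + a'(y)\partial_y.$$
Structurally this rests on the single relation $a(y) + b(y) = 2y = \tfrac{d}{dy}(y^2+1)$, which is precisely the classical Siegmund-duality condition between two one-dimensional diffusions sharing a diffusion coefficient.

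Given this, the remainder is short. The backward Kolmogorov equation $\partial_t p_t(y,z) = L_y p_t(y,z)$, integrated in $z$, gives $\partial_t F = L_y F$, and combined with the commutation identity,
$$\partial_t \psi \;=\; -\partial_y L_y F \;=\; -\hat L^*_y \partial_y F \;=\; \hat L^*_y \psi,$$
so $\psi$ satisfies the forward Kolmogorov equation of $\hat L$ in $y$. Since $\pm\infty$ are natural boundaries for $L$, one has $F(t,x,y) \to 1$ as $y \to -\infty$ and $\to 0$ as $y \to +\infty$, whence $\int_{\mathbb{R}} \psi(t,x,y)\,dy = 1$ for every $t>0$; and $F(0,x,y) = \mathbf{1}(y \le x)$ yields the initial condition $\psi(0,x,\cdot) = \delta_x$. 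Thus $\psi$ is a smooth probability density that solves the Fokker--Planck Cauchy problem for $\hat L$ with delta initial data at $x$.

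The hard part will be uniqueness in this Cauchy problem, since $\hat L$ has unbounded coefficients and uniqueness for the fundamental solution is not completely automatic. I will handle it by noting that the Gaussian-type heat-kernel bounds derived in the proof of Lemma \ref{FellerSemigroup} apply verbatim to $\hat L$ (its coefficients have exactly the same structure, with $b$ in place of $a$), and that $\psi$ inherits the same sort of decay in $y$ from the bounds on $\partial_y p_t$; consequently $\psi - \hat p_t(x,\cdot)$ is a sufficiently integrable solution of the homogeneous forward equation with zero initial data and must vanish. Therefore $\psi = \hat p_t(x,\cdot)$, which is the claimed identity.
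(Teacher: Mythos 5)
Your proof follows essentially the same route as the paper: establish the commutation identity $\partial_y L_y = \hat L^*_y \partial_y$ (which the paper records as its display (\ref{SiegmundCommutation})), deduce that $-\partial_y\int_{-\infty}^x p_t(y,z)\,dz$ solves the forward Kolmogorov equation for $\hat L$ with $\delta_x$ initial data, and conclude by uniqueness. The only differences are cosmetic — you make the uniqueness step more explicit while the paper appeals to naturality of $\pm\infty$ as boundary points, and conversely the paper verifies the $t\to 0$ delta initial condition rigorously via reversibility with respect to the speed measure, which your sketch only asserts formally.
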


\begin{proof}
We denote the left hand side of the equality by $q_t(x,y)$:
\begin{align*}
q_t(x,y)=-\partial_y\int_{-\infty}^{x}p_t(y,z)dz.
\end{align*}
We will now show that $q_t(x,y)$ solves the Kolmogorov forward equation for the $\hat{L}$-diffusion:
\begin{align*}
\partial_tq_t(x,y)&=\hat{L}_y^*q_t(x,y),  \ \ t>0, x,y \in \mathbb{R},\\
\lim_{t\to 0}q_t(x,y)&=\delta(x=y).
\end{align*}
Here, $\hat{L}_y^*$ denotes the (formal) adjoint of $\hat{L}$ with respect to Lebesgue measure acting in the variable $y$. Since, $-\infty $ and $+\infty$ are natural boundary points for the $\hat{L}$-diffusion, there are no further boundary conditions and the claim of the lemma follows.
\iffalse
The positivity property is a consequence of the fact that $\int_{-\infty}^{x}p_t(y,z)dz=\mathbb{P}_y\left(X(t)\le x\right)$ decreases as $y$ increases. This is intuitively obvious but here is a probabilistic argument: Consider the coupling of two identical copies $X_y(t)$ and $X_{y'}(t)$ of the $L$-diffusion starting from points $y\le y'$ respectively, which are independent until they meet in which case they coalesce and move together. Then clearly, for any $x$ the event $\{X_{y'}(t)\le x\}$ is contained in $\{X_{y}(t)\le x\}$ and the conclusion follows.
\fi

For the time $t=0$ condition, we can easily see that formally:
\begin{align*}
\lim_{t \to 0}q_t(x,y)=-\partial_y \textbf{1}(y\le x)=\delta(x=y).
\end{align*}
The rigorous proof goes as follows. Let $f \in C_c^2\left(\mathbb{R}\right)$. Then, making use of the reversibility of $p_t(x,y)$ with respect to its speed measure $m$:
\begin{align*}
\frac{m(y)}{m(z)}p_t(y,z)=p_t(z,y)
\end{align*}
we calculate:
\begin{align*}
\int_{-\infty}^{\infty}q_t(x,y)f(y)dy&=\int_{-\infty}^{x}dz\int_{-\infty}^{\infty}-\partial_yp_t(y,z)f(y)dy=\int_{-\infty}^{x}dz\int_{-\infty}^{\infty}p_t(y,z)f'(y)dy\\
&=\int_{\infty}^{x}dz\int_{-\infty}^{\infty}\frac{m(z)}{m(y)}\frac{m(y)}{m(z)}p_t(y,z)f'(y)dy\\
&=\int_{\infty}^{x}m(z)dz\int_{-\infty}^{\infty}p_t(z,y)\frac{f'(y)}{m(y)}dy.
\end{align*}
Thus, we have:
\begin{align*}
\lim_{t\to 0}\int_{-\infty}^{\infty}q_t(x,y)f(y)dy&=\lim_{t\to 0}\int_{\infty}^{x}m(z)dz\int_{-\infty}^{\infty}p_t(z,y)\frac{f'(y)}{m(y)}dy\\
&=\int_{-\infty}^{x}m(z)\frac{f'(z)}{m(z)}dz=f(x).
\end{align*}
Finally, to show that $q_t(x,y)$ satisfies the differential equation we first note that by an elementary calculation:
\begin{align}\label{SiegmundCommutation}
\partial_y L_y=\hat{L}^*_y\partial_y.
\end{align}
Essentially this is how one arrives at the exact form of the dual diffusion $\hat{L}$. Then, using the fact that $p_t(x,y)$ solves the Kolmogorov backward differential equation for $L$:
\begin{align*}
\partial_tp_t(y,z)=L_{y}p_t(y,z), \ \ t>0,y,z\in \mathbb{R},
\end{align*}
 we have:
\begin{align*}
\partial_t q_t(x,y)=-\partial_y \int_{-\infty}^{x}\partial_tp_t(y,z)dz&=-\partial_y\int_{-\infty}^{x}L_yp_t(y,z)dz\\
&=-\hat{L}_y^*\partial_y\int_{-\infty}^{x}p_t(y,z)dz\\
&=\hat{L}_y^*q_t(x,y).
\end{align*}
\end{proof}

We now arrive at the following key definition of a block matrix determinant kernel ${q}_t^{N,N+1}((x,y),(x',y'))$. The reader is referred to \cite{InterlacingDiffusions} for motivation behind this and for a study of its remarkable probabilistic properties (that we will not need here).

\begin{defn}
Define the following block matrix determinant of size $(2N+1)\times (2N+1)$, with $t>0$, for $x,x'\in W^{N+1}$ and $y,y'\in W^N$ such that $y\prec x, y'\prec x'$:
\begin{align}
{q}_t^{N,N+1}((x,y),(x',y'))=\det\
 \begin{pmatrix}
{A}_t(x,x') & {B}_t(x,y')\\
  {C}_t(y,x') & {D}_t(y,y') 
 \end{pmatrix} \ 
\end{align}
where,
\begin{align*}
{A}_t(x,x')_{ij} &=\partial_{x'_j}P(t) \textbf{1}_{(-\infty,x_j']} (x_i)= p_t(x_i,x_j')  \ , \\
{B}_t(x,y')_{ij}&=\hat{m}(y'_j)(P(t) \textbf{1}_{(-\infty,y'_j]}(x_i) -\textbf{1}(j\ge i)) \ ,\\
{C}_t(y,x')_{ij}&=-\hat{m}^{-1}(y_i)\partial_{y_i}\partial_{x'_j}P(t) \textbf{1}_{(-\infty,x'_j]}(y_i) \ ,\\
{D}_t(y,y')_{ij}&=-\frac{\hat{m}(y'_j)}{\hat{m}(y_i)}\partial_{y_i} P(t) \textbf{1}_{(-\infty,y'_j]}(y_i)=\hat{p}_t(y_i,y_j').
\end{align*}
\end{defn}
Observe that, the second equality for the entries ${D}_t(y,y')_{ij}$ is really just the statement of Lemma \ref{DualityLemma}:
\begin{align*}
-\partial_{y_i}P(t)\textbf{1}_{(-\infty,y'_j]}(y_i)=\hat{p}_t(y_j',y_i)
\end{align*}
along with reversibility of the $\hat{L}$-diffusion with respect to its speed measure $\hat{m}$.

Moreover, observe that in the notation of (\ref{intermediateintertwining}):
\begin{align*}
\det\left({A}_t(x,x')_{ij}\right)_{i,j=1}^{N+1}&=\mathcal{P}^{(N+1)}_{s}(t)(x,x'),\\
\det\left({D}_t(y,y')_{ij}\right)_{i,j=1}^{N}&=\hat{\mathcal{P}}^{(N)}_{s}(t)(y,y').
\end{align*}

Finally, note that by the decay at $\pm \infty$ of the transition density and its derivatives obtained  in Lemma \ref{FellerSemigroup}, the determinant above, for any $x,y'$, is integrable in the variables $x'$ in the domain $y'\prec x'$ and $y$ in the domain $y\prec x$ (note that the latter is compact).
\begin{proof}[Proof of intertwining relation (\ref{intermediateintertwining})]
We will integrate the determinant ${q}_t^{N,N+1}((x,y),(x',y'))$ with respect to Lebesgue measure $dx'$ over $y'\prec x'$ and with respect to $\Lambda_{N,N+1}(x,dy)=\prod_{i=1}^{N}\hat{m}(y_i)\textbf{1}(y\prec x)dy$ (over $y\prec x$). Essentially, (\ref{intermediateintertwining}) follows immediately from computing this integral in two ways:
\begin{align*}
\int_{y'\prec x'}^{}dx'\int_{y\prec x}^{}dy \prod_{i=1}^{N}\hat{m}(y_i)q_t^{N,N+1}\left(\left(x,y\right),\left(x',y'\right)\right)=\int_{y\prec x}^{}\prod_{i=1}^{N}\hat{m}(y_i)dy\int_{y'\prec x'}^{}dx' q_t^{N,N+1}\left(\left(x,y\right),\left(x',y'\right)\right).
\end{align*}

We first perform the integration with respect to $dx'$. By multilinearity of the determinant we can bring the integrals inside since we note that for each $1\le j \le N+1$, the variable $x_j'$ only appears in a single column. Then, using the relations between the entries of the block matrix:
\begin{align*}
\int_{y_{j-1}'}^{y_j'}A_t\left(x,x'\right)_{ij}dx_j'&=\frac{1}{\hat{m}(y'_j)}B_t\left(x,y'\right)_{ij}-\frac{1}{\hat{m}(y_{j-1}')}B_t\left(x,y'\right)_{ij-1}+\textbf{1}\left(j=i\right), \textnormal{ for } 2\le j \le N, \\ 
\int_{y_{j-1}'}^{y_j'}C_t\left(y,x'\right)_{ij}dx_j'&=\frac{1}{\hat{m}(y'_j)}D_t\left(x,y'\right)_{ij}-\frac{1}{\hat{m}(y_{j-1}')}D_t\left(x,y'\right)_{ij-1}, \textnormal{ for } 2\le j \le N,\\
\int_{-\infty}^{y_1'}A_t\left(x,x'\right)_{i1}dx_1'&=\frac{1}{\hat{m}(y'_1)}B_t\left(x,y'\right)_{i1}+\textbf{1}\left(i=1\right), j=1,\\
 \int_{-\infty}^{y_1'}C_t\left(x,x'\right)_{i1}dx_1'&=\frac{1}{\hat{m}(y'_1)}D_t\left(x,y'\right)_{i1}, j=1,\\
\int_{y_N'}^{\infty}A_t\left(x,x'\right)_{iN+1}dx_{N+1}'&=-\frac{1}{\hat{m}(y'_N)}B_t\left(x,y'\right)_{iN}+\textbf{1}\left(i=N+1\right), j=N+1,\\ \int_{y_{N}'}^{\infty}C_t\left(x,x'\right)_{iN+1}dx_{N+1}'&=-\frac{1}{\hat{m}(y'_N)}D_t\left(x,y'\right)_{iN}, j=N+1,
\end{align*}
we easily get:
\begin{align*}
\int_{y'\prec x'}^{}dx'{q}_t^{N,N+1}((x,y),(x',y'))=\det\left({D}_t(y,y')_{ij}\right)_{i,j=1}^{N}=\hat{\mathcal{P}}^{(N)}_{s}(t)(y,y').
\end{align*}
Now, multiply both sides of the equality above by $\prod_{i=1}^{N}\hat{m}(y_i)dy$ and integrate over $y\prec x$. Then, the right hand side of the equality becomes:
\begin{align*}
\left[\Lambda_{N,N+1}\hat{\mathcal{P}}^{(N)}_{s}(t)\right](x,y').
\end{align*}
While, the left hand side using Fubini's theorem is:
\begin{align*}
\int_{y'\prec x'}^{}dx'\int_{y\prec x}^{}dy \prod_{i=1}^{N}\hat{m}(y_i)q_t^{N,N+1}\left(\left(x,y\right),\left(x',y'\right)\right).
\end{align*}

Now, the inner integral can be evaluated further: by multilinearity again, for each $1\le i \le N$, the variable $y_i$ only appears in a single row and using the relations: 
\begin{align*}
\int_{x_i}^{x_{i+1}}\hat{m}(y_i)C_{t}(y,x')_{ij}dy_i&=-A_t(x,x')_{i+1j}+A_t(x,x')_{ij},\\
\int_{x_i}^{x_{i+1}}\hat{m}(y_i)D_t(y,y')_{ij}dy_i&=-B_t(x,y')_{i+1j}+B_t(x,y')_{ij}+\hat{m}(y_j')\textbf{1}\left(j=i\right),
\end{align*}
we get:
\begin{align*}
\int_{y\prec x}^{}dy \prod_{i=1}^{N}\hat{m}(y_i)q_t^{N,N+1}\left(\left(x,y\right),\left(x',y'\right)\right)&=\det\left({A}_t(x,x')_{ij}\right)_{i,j=1}^{N+1}\prod_{i=1}^{N}\hat{m}(y_i')\\
&=\mathcal{P}^{(N+1)}_{s}(t)(x,x')\prod_{i=1}^{N}\hat{m}(y_i').
\end{align*}
Thus, the left hand side is equal to:
\begin{align*}
\left[\mathcal{P}^{(N+1)}_{s}(t)\Lambda_{N,N+1}\right](x,y')
\end{align*}
and so we obtain (\ref{intermediateintertwining}).
\end{proof}

\bigskip
\noindent
{\sc School of Mathematics, University of Bristol, U.K.}\newline
\href{mailto:T.Assiotis@bristol.ac.uk}{\small T.Assiotis@bristol.ac.uk}

\end{document}